\newenvironment{claimproof}[1][\proofname]{\proof[#1]}{\endproof}
\newtheorem{theorem}{Theorem}[section]
\newtheorem{lemma}[theorem]{Lemma}
\newtheorem{corollary}[theorem]{Corollary}
\newtheorem{observation}[theorem]{Observation}
\newtheorem{claim}[theorem]{Claim}
\newtheorem*{claim*}{Claim}
\newtheorem{proposition}[theorem]{Proposition}
\newtheorem{problem}[theorem]{Problem}
\theoremstyle{definition}
\newtheorem*{definition*}{Definition}
\newcommand{\cC}{\mathcal{C}}
\newcommand{\calG}{\mathcal{G}}
\newcommand{\mbE}{\mathbb{E}}
\newcommand{\RR}{\mathbb{R}}
\newcommand{\eps}{\varepsilon}
\newcommand{\Bin}{\mathrm{Bin}}
\newcommand{\calD}{\mathcal{D}}
\renewcommand{\Pr}{\mathbb{P}}
\newcommand{\ceil}[1]{
    \left\lceil #1 \right\rceil
}
\title[Coloring small locally sparse degenerate graphs]{Coloring small locally sparse degenerate graphs \\ and related problems}
\author
{
Domagoj Brada\v{c}
}
\author{Jacob Fox}
\thanks{The second author gratefully acknowledges funding from NSF awards DMS-2452737 and DMS-2154129. }
\author 
{
Raphael Steiner
}
\thanks{The third author gratefully acknowledges funding from the SNSF Ambizione Grant No. 216071. }
\author
{
Benny Sudakov 
}\thanks{The fourth author gratefully acknowledges funding from the SNSF Grant 200021-228014. }
\author{
Shengtong Zhang
}
\address[Brada\v{c}]{Department of Mathematics, EPFL, Lausanne, Switzerland.} 
\email{\tt domagoj.bradac@epfl.ch}
\address[Steiner, Sudakov]{Department of Mathematics, Institute for Operations Research, ETH Z\"{u}rich, Switzerland.}
\email{\tt $\{$raphaelmario.steiner, benjamin.sudakov$\}$@math.ethz.ch}
\address[Fox, Zhang]{Department of Mathematics, Stanford University, Stanford, CA.}
\email{\tt $\{$jacob.fox,stzh1555$\}$@stanford.edu}
\date{}
\begin{document}

% Shengtong: feel free to proceed without me.

\begin{abstract}
    \setlength{\parskip}{\smallskipamount}
    \setlength{\parindent}{0pt}
    \noindent
The classic upper bound on the chromatic number of $d$-degenerate graphs is $d+1$, shown to be tight by complete graphs. A natural question is whether this bound remains tight if further constraints are imposed on the graph, such as forbidding large cliques. Quite surprisingly, classic constructions of Tutte and Zykov from the early 50s and a generalization due to Kostochka and Ne\v{s}et\v{r}il from 1999 show that there exist $d$-degenerate $(d+1)$-chromatic graphs that are triangle-free (and even have large girth). However, the orders of all known constructions of $d$-degenerate $(d+1)$-chromatic graphs with small clique number grow rapidly with~$d$. Motivated by this phenomenon and addressing a problem posed by the second author at the Oberwolfach Graph Theory workshop,
we prove that the minimum order $f(d)$ of a $d$-degenerate triangle-free graph of chromatic number $d+1$ grows exponentially in a polynomial of $d$, concretely $e^{\Omega(d)}\le f(d)\le e^{O(d^2\log d)}.$ The lower bound follows from a novel upper bound on the chromatic number of triangle-free graphs that may be of independent interest: Every triangle-free $d$-degenerate graph $G$ on $n \le e^{O(d)}$ vertices satisfies $$\chi(G)\le O\left(\frac{d}{\log\left(d/\log n\right)}\right).$$ We also extend this to a more general result about degenerate graphs with sparse neighborhoods, which has applications to a variety of graph coloring problems: For example, we prove that every counterexample to Hadwiger's conjecture with parameter $t$ must have a complete bipartite subgraph with one exponentially large side ($K_{a,b}$ where $a=(\log t)^{1/2-o(1)}$ and $b=e^{t^{1-o(1)}}$) or a small and very dense subgraph (of order $\le t$ with $\ge t^{2-o(1)}$ edges) in some neighborhood.

For the upper bound on $f(d)$ we establish a surprising connection between $f(d)$ and the on-line-chromatic number $g(n)$ of $n$-vertex triangle-free graphs. We also give an asymptotic improvement of the previous best upper bound for $g(n)$ due to Lov\'{a}sz, Saks and Trotter from 1989.

Along the way, we disprove a generalization of Harris' conjecture on fractional coloring to graphs of bounded clique number and raise numerous problems which open up various interesting directions to explore for future research.
\end{abstract}

\maketitle

\section{Introduction
}\label{sec:intro}
The \emph{chromatic number} $\chi(G)$ of a graph $G$ is one of the most important graph parameters and has been actively studied for about a century, see~\cite{scott22} for a recent survey. Despite this, we still do not fully understand what makes the chromatic number of a graph large, as illustrated by the fact that it is \NP-hard to compute $\chi(G)$~\cite{karp} and even intractable to approximate it to within a factor $n^{1-\varepsilon}$ for any $\varepsilon>0$, where $n$ is the order of the graph~\cite{Feige}. Consequently, to bound the chromatic number of a given graph,  one often has to resort to one of a few known bounds that are easy to compute, such as the maximum degree or the degeneracy of the graph. Recall that a graph $G$ is called $d$-degenerate for some $d\in \mathbb{R}_{\ge 0}$ if its vertex set can be linearly ordered as $v_1,\ldots,v_n$ such that for each $i\in \{1,\ldots,n\}$, vertex $v_i$ has at most $d$ neighbors among the previous vertices $\{v_1,\ldots,v_{i-1}\}$ in the ordering. Greedy coloring along this ordering shows that $\chi(G)\le d+1$ for every $d$-degenerate graph, which is an improvement of the weaker bound $\chi(G)\le \Delta+1$ in terms of the maximum degree $\Delta$ of the graph. Both of these bounds are tight, as evidenced by the complete graphs $K_{d+1}$ and $K_{\Delta+1}$. But what if we forbid large complete subgraphs? Can we then improve these upper bounds? A seminal result of Johansson~\cite{johansson,johansson2} states that indeed a significant saving can be obtained when considering the maximum degree. Namely, a graph of maximum degree $\Delta$ has chromatic number at most $O\left(\frac{\Delta}{\log \Delta}\right)$ if it is triangle-free and at most $O_r\left(\frac{\Delta\log\log\Delta}{\log \Delta}\right)$ if it is $K_r$-free for $r\ge 4$; see also Molloy~\cite{molloy} for improved constants.

However, quite surprisingly, when moving from maximum degree to degeneracy, such a saving is no longer possible: Already in the early 50s, Zykov~\cite{zykov} and Tutte~\cite{descartes} published constructions which show that for every $d\in \mathbb{N}$, there exists a triangle-free $d$-degenerate graph of chromatic number $d+1$, so even forbidding triangles does not achieve any improvement of the degeneracy bound. As shown by Kostochka and Ne\v{s}et\v{r}il~\cite{kostnest} in 1999, one can modify Tutte's construction to even obtain $d$-degenerate $(d+1)$-chromatic graphs of large girth. 

This may seem like a conclusive answer to our above question, but this is not quite the case. Indeed, one can check that the number of vertices of the $d$-degenerate graphs constructed by Zykov and Tutte grows extremely rapidly (much faster than exponential in $d$). It is thus natural to ask whether this property of the constructions is a necessary one: If a triangle-free $d$-degenerate graph has not too many vertices, can we then improve the upper bound $d+1$ on the chromatic number significantly? Moreover, if so, how fast does the smallest number of vertices $f(d)$ of a triangle-free $d$-degenerate graph with chromatic number $d+1$ grow?  These questions, which were also explicitly posed by the second author at the Oberwolfach Graph Theory Workshop in January 2025 and came from work on a generalization of Ramsey numbers \cite{FoxTidorZhang}, are the topic of our paper. 

\subsection{Main results.}

\paragraph*{\textbf{Coloring small locally sparse degenerate graphs.}} Our first theorem gives a positive answer to the first question above, as follows.

\begin{theorem}\label{thm:main2}
    There are absolute constants $c,C >0$ such that the following holds. Let $d\in \mathbb{R}_{\ge 1}$ and let $G$ be a triangle-free $d$-degenerate graph on $n \le 2^{cd}$ vertices. Then \[ \chi(G) \le \frac{C d}{\log (d / \log n)}. \]
\end{theorem}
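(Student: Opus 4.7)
The plan is to iteratively peel off high-degree vertices and apply Johansson's theorem on the chromatic number of triangle-free graphs with bounded maximum degree at each step. Write $T(n) := Cd/\log(d/\log n)$ for the target and $k := d/\log n$, so that $T(n) = Cd/\log k$. I will prove the bound by a recursive peeling argument starting with $G_0 = G$ and $n_0 = n$.

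The basic peeling step is as follows. Fix a threshold $\Delta > 0$, and split the vertex set of $G$ into the low-degree part $L = \{v : d_G(v) \le \Delta\}$ and the high-degree part $H = V(G) \setminus L$. By double counting together with $|E(G)| \le dn$ (which follows from $d$-degeneracy), we get $|H| \le 2dn/\Delta$. The graph $G[L]$ is triangle-free with maximum degree at most $\Delta$, so Johansson's theorem gives $\chi(G[L]) \le C'\Delta/\log\Delta$. The graph $G[H]$ is again triangle-free and $d$-degenerate on $|H|$ vertices, so we can iterate the peeling step on $G[H]$. Iterating yields a nested sequence $G = G_0 \supset G_1 \supset \dots$ with $n_{i+1} \le 2dn_i/\Delta_i$, and total chromatic cost $\chi(G) \le \sum_i C'\Delta_i/\log\Delta_i + (d+1)$, where the $(d+1)$ accounts for a trivial degeneracy-based coloring of the final small residual subgraph $G_k$ once $n_k \le d^{O(1)}$.

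The crux of the argument is to choose the sequence $(\Delta_i)_{i\ge 0}$ so that the total Johansson cost matches the target $T(n)$. The main technical difficulty is that $T$ changes \emph{very} slowly with $n$: shrinking $n$ by a constant factor saves only $O(Cd/(\log^2 k \cdot \log n))$ in the budget, as one sees from differentiating $T(n) = Cd/(\log d - \log\log n)$. Concurrently, each peeling step incurs a Johansson cost of order $\Delta_i/\log\Delta_i$, which already exceeds the incremental savings available per halving of $n$ unless $\Delta_i$ is quite small and $n_i$ therefore shrinks only mildly. Balancing these competing requirements forces an adaptive choice such as $\Delta_i \asymp d\cdot f(k_i)$ for a slowly growing function $f$ adapted to the current budget, with the analysis boiling down to a telescoping estimate of $\sum_i \Delta_i/\log\Delta_i$ that respects the logarithm-of-logarithm structure of $T$.

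As an alternative route, I would also consider adapting the Johansson-Molloy entropy compression argument directly to $G$, leveraging its degeneracy ordering rather than its global maximum degree. In a degeneracy ordering $v_1,\dots,v_n$, each vertex has at most $d$ back-neighbors, and these form an independent set since $G$ is triangle-free; this is precisely the local structure that Molloy's kernel method exploits. The hope is that the smallness of $n$ enters the entropy analysis through a short output trace, and that the degeneracy $d$ plays the role of the maximum degree in the dependency bookkeeping. Such a direct approach could yield $\chi(G) \le O(d/\log(d/\log n))$ in a single shot, bypassing the delicate adaptive thresholding required by the iterative peeling and handling the regime of large maximum degree without a recursion.
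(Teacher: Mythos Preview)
Your peeling approach hits a hard quantitative barrier that no choice of thresholds overcomes. To shrink $n$ at all you need $\Delta_i > 2d$; writing $\Delta_i = 2de^{y_i}$ with $y_i>0$, the constraint $\prod_i(\Delta_i/(2d)) \ge n/d^{O(1)}$ becomes $\sum_i y_i \gtrsim \log n$, and the total Johansson cost is $\sum_i 2de^{y_i}/(\log(2d)+y_i)$. The summand is convex in $y_i$, and a short optimization shows this sum is always $\Omega(d\log n/\log d)$, with the minimum attained when each $y_i \approx 1$ (i.e.\ $\Delta_i \approx 2ed$, roughly $\log n$ steps). In the main regime $n = 2^{\Theta(d)}$ the target bound is $O(d)$ but your cost is $\Omega(d^2/\log d)$; even for $\log n = \sqrt d$ you pay $\Omega(d^{3/2}/\log d)$ against a target of $O(d/\log d)$. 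The telescoping you allude to cannot close this gap: peeling trades colors for a reduction in \emph{vertex count}, whereas $T(n)$ is essentially flat in $n$. Your second route, adapting entropy compression to the degeneracy ordering, is stated only as a hope, with no mechanism offered for how $\log n$ would enter the bound or why the dependency structure would cooperate.

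The paper takes a different tack: it reduces the \emph{degeneracy} rather than the vertex count. Using a capped variant of Martinsson's random independent-set process, it builds a distribution $\mathcal{D}$ on independent sets of $G$ such that each vertex lies in $I\sim\mathcal{D}$ with probability $\ge \alpha/4$, while the conditional probability $\Pr[v_i\in I\mid I\cap\{v_1,\dots,v_{i-1}\}]$ is uniformly $\le \alpha e^{2\alpha d}$. With $\alpha\asymp \log(d/\log n)/d$ one samples $\ell=\Theta(d/\log(d/\log n))$ independent sets as color classes; the conditional cap makes Azuma applicable to the number of surviving left-neighbors of each vertex, and a union bound over the $n$ vertices (this is precisely where $n\le 2^{cd}$ is used) shows that with positive probability every left-neighborhood shrinks to size $\le d/2$. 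One then recurses on the $(d/2)$-degenerate remainder. Because the recursion halves $d$ rather than $n$, the costs sum geometrically to the target bound, avoiding the $d\log n/\log d$ barrier your scheme runs into.
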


 Alon, Krivelevich and the fourth author~\cite{AKS} generalized Johansson's aforementioned result~\cite{johansson} on the chromatic number of triangle-free graphs with maximum degree $\Delta$ to the more general setting of graphs with sparse neighborhoods. Namely, they showed that a graph of maximum degree $\Delta$ in which every neighborhood spans at most $\Delta^2/f$ edges has chromatic number at most $O\left(\frac{\Delta}{\log f}\right)$. Given this, it is natural to wonder whether we can also extend Theorem~\ref{thm:main2} to a more general setting of sparse neighborhoods and still obtain an improvement over the bound $d+1$. Our next result provides such an extension. Given a graph with a linear order on its vertices, the \emph{left-neighborhood} of a vertex is the set of neighbors that precede it in the linear order. 

\begin{theorem}\label{thm:sparse}
      There exist absolute constants $C, c > 0$ such that the following holds. Let $d\in \mathbb{R}_{\ge 1}$, let $2<f<d^2$ and let $G$ be a graph with an ordering such that every left-neighborhood spans at most $d$ vertices and at most $d^2/f$ edges. If $G$ has $n \le 2^{cd}$ vertices, then \[ \chi(G) \le \frac{C d}{\min\{\log  f,\log(d/\log n)\}}. \]  
\end{theorem}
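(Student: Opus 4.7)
My plan is to extend the argument for Theorem~\ref{thm:main2} to accommodate the sparse-neighborhood condition. Set $k := \lceil C d / \min\{\log f, \log(d/\log n)\} \rceil$; the goal is to produce a proper $k$-coloring of $G$ via a randomized procedure that processes the vertices in the given linear order $v_1, \ldots, v_n$, assigning each $v_i$ a color that avoids those already used on its left-neighborhood $N^-(v_i)$. Writing $S_i$ for the random set of colors appearing on $N^-(v_i)$, the procedure succeeds as long as $|S_i| \le k-1$ for every $i$, so by a union bound it suffices to show $\Pr[|S_i| = k] = O(1/n)$ for each $i$ separately.

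The bound on $\Pr[|S_i| = k]$ draws on two competing savings corresponding to the two quantities in the minimum. The first is the smallness of $n$: for an approximately uniform random coloring on $[k]$, a neighborhood of $\le d$ vertices covers all $k$ colors with probability $\lesssim k \cdot e^{-d/k}$, and choosing $k \gtrsim d/\log(d/\log n)$ makes this quantity at most $2^{-\Omega(d)}/n$, which is summable over the $n \le 2^{cd}$ vertices; this is the mechanism behind Theorem~\ref{thm:main2}. The second is the sparsity of left-neighborhoods: with $\le d^2/f$ edges in $N^-(v_i)$, an Alon--Krivelevich--Sudakov-style iterative semi-random coloring argument on the subgraph induced on $N^-(v_i)$ should show that any proper coloring of $G$ uses only $O(d/\log f)$ distinct colors there, so $|S_i|$ is automatically small whenever $k \gtrsim d/\log f$. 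To interpolate between the two regimes, I would split each left-neighborhood into a dense core (carrying the bulk of its $\le d^2/f$ edges), colored from a restricted palette via the sparsity estimate, and a sparse remainder whose colors are effectively uniform and to which the counting argument of Theorem~\ref{thm:main2} applies. Setting $k$ to dominate the worse of the two thresholds then yields the claimed bound.

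The main obstacle I anticipate is the control of dependencies: since right-degrees are unbounded, the color chosen at $v_i$ propagates to arbitrarily many later vertices, so the bad events $\{|S_i| = k\}$ are far from independent. I expect this to be handled via an iterative semi-random coloring process in the Johansson--Molloy tradition, or via a Lov\'asz Local Lemma or entropy compression setup that exploits the degeneracy ordering (so that each bad event at $v_i$ depends only on a set whose left-degree is bounded). The most delicate step is combining both savings simultaneously without incurring a multiplicative loss that would replace the $\min$ in the conclusion by a product: one needs a single randomized procedure in which the sparsity of $N^-(v_i)$ provides a structural reduction before the smallness-of-$n$ counting argument is applied, rather than the two being chained naively.
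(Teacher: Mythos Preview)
There is a genuine gap in your second ``saving.'' You write that an AKS-style argument on the subgraph induced on $N^-(v_i)$ ``should show that any proper coloring of $G$ uses only $O(d/\log f)$ distinct colors there.'' This is false: the AKS bound controls the \emph{chromatic number} of a graph with sparse neighborhoods, not the number of colors an \emph{arbitrary} proper coloring of the ambient graph places on a given subset. An independent set of size $d$ inside $N^-(v_i)$ can perfectly well receive $d$ distinct colors under a proper coloring of $G$, and the edge-sparsity of $N^-(v_i)$ does nothing to prevent this. So the sparsity condition gives you no deterministic bound on $|S_i|$, and the proposed dense-core/sparse-remainder split has no foundation. Without this, your scheme collapses to the triangle-free argument and never sees the parameter~$f$.

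The paper's route is quite different and does not color greedily along the ordering. It first reduces the sparse-neighborhood setting to the triangle-free setting by random subsampling: keep each vertex independently with probability $p=\sqrt{f}/(2d)$ and discard any vertex whose surviving left-neighborhood exceeds $\sqrt{f}$ vertices or contains an edge; the resulting induced subgraph is triangle-free and $\sqrt{f}$-degenerate, so a weight-capped variant of Martinsson's distribution applies to it. Pushing this forward yields a distribution on independent sets of $G$ in which each vertex is covered with probability $\Omega(\beta/d)$, where $\beta=\Theta(\min\{\log f,\log(d/\log n)\})$, and in which, conditioned on the past, each vertex is included with probability at most $O(\beta e^{2\beta}/d)$. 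One then samples $\ell=\Theta(d/\beta)$ such independent sets, removes them, and uses Azuma's inequality together with a union bound over the $n\le 2^{cd}$ vertices to show the remaining graph is $(d/2)$-degenerate with positive probability; induction on $d$ finishes. The cap on weights in the modified Martinsson process is precisely what makes Azuma applicable, and the subsampling step is how the sparsity parameter $f$ enters. Your outline has neither ingredient, and the LLL/entropy-compression hope is unlikely to rescue a greedy scheme given the unbounded right-degrees you yourself flag.
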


Note that Theorem~\ref{thm:main2} can be recovered as a special case of Theorem~\ref{thm:sparse} when $f=\Theta(d^2)$. 

\paragraph*{\textbf{Small triangle-free $d$-degenerate $(d+1)$-chromatic graphs via on-line coloring.}} To produce examples of small triangle-free $d$-degenerate graphs with chromatic number $d+1$, we establish a surprising connection between this problem and the \emph{on-line chromatic number} of triangle-free graphs of order $n$. An on-line graph is a tuple $(G, \prec)$, where $G$ is a graph and $\prec$ is a linear order on the vertex set, inducing an ordering $v_1 \prec v_2 \prec \dots \prec v_n$. For a hereditary graph class $\calG$, an on-line coloring algorithm $A$ for $\calG$ is first given as input a positive integer $n$, representing the total number of vertices. Then, the algorithm is presented vertices $v_1, \dots, v_n$ of an on-line graph $(G, \prec),$ where $G \in \calG$ in order, one at a time. Having been presented vertex $v_i,$ the algorithm assigns it a color from a color palette $\cC_n$ distinct from the colors previously assigned to its neighbors. 
%Formally, an on-line algorithm $A$ for a graph class $\calG$ is a function $A \colon \{ (n, (G, \prec)) \, \vert \, n \in \mathbb{Z}_{>0}, G \in \calG, |V(G)| \le n\} \rightarrow \cC_n$ such that for any $n$-vertex graph $G \in \calG$ if $v_i v_j \in E(G)$, then $A(n, (G[\{v_1, \dots, v_i\}], \prec_i)) \neq A(n, (G[\{v_1, \dots, v_j\}], \prec_j))$.
For more information on on-line coloring, we refer to the survey of Kierstead~\cite{kierstead}. The on-line chromatic number of $n$-vertex graphs from $\calG$ is the minimum size of a color palette $\cC_n$ such that there is an on-line algorithm using $\cC_n$ as described above.

%We just recap the necessary terminology to state the result here, see~\cite{kierstead} for a survey on on-line-coloring of graphs. An \emph{on-line graph} is simply a tuple $(G,\prec)$ where $G$ is a graph and $\prec$ is a linear order on the vertex set, inducing an ordering $v_1\prec \ldots\prec v_n$. An algorithm $A$ that properly colors the vertices of the on-line graph $(G,\prec)$ is called an \emph{on-line coloring algorithm} if for every $i\in [n]$ the color that $A$ assigns to vertex $v_i$ is solely determined by the isomorphism class of the ordered induced subgraph $(G[v_1,\ldots,v_{i-1}],\prec)$. Intuitively, this means that $A$ is presented the vertices one by one in the linear ordering, and at the time of seeing a new vertex $v_i$ immediately has to decide which color to assign to it. Given an on-line coloring algorithm $A$ and an on-line graph $(G,\prec)$, we denote by $\chi_A(G,\prec)$ the number of colors used by $A$ to color $G$, and by $\chi_A(G)$ the maximum of $\chi_A(G,\prec)$ over all linear orders $\prec$ on $V(G)$. Finally, for a finite set of graphs $\mathcal{G}$ we denote $\chi_A(\mathcal{G}):=\max_{G\in \mathcal{G}}\chi_A(G)$. Finally, the \emph{on-line chromatic number of $\mathcal{G}$} is defined as the minimum of  $\chi_A(\mathcal{G})$ over all on-line coloring algorithms $A$. For more information on on-line coloring, we refer to the survey of Kierstead~\cite{kierstead}

For positive integers $r,n$, let $g_r(n)$ be the on-line chromatic number of the family of $K_r$-free graphs on $n$ vertices. The following statement makes precise the connection between on-line chromatic numbers and the existence of small locally sparse $d$-degenerate and $(d+1)$-chromatic graphs.

\begin{restatable}{proposition}{propobsrestate} \label{prop:obs}
    Given $d,r\in \mathbb{N}$, let $N\in \mathbb{N}$ be the smallest number satisfying $g_r(N)\ge d+1$. Then there exists a $K_r$-free $d$-degenerate graph with chromatic number $d+1$ on at most $\frac{d^{N}-1}{d-1}$ vertices.  
\end{restatable}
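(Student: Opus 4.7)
The approach is to construct $G$ directly from the adversary's winning strategy for the on-line $K_r$-free coloring game on $N$ vertices with palette size $d$. Since $g_r(N)\ge d+1$, such a strategy exists, and I would model it as a rooted tree $T$ of depth at most $N-1$ in which each node $u$ is labeled by a newly presented vertex $v_u$ together with its adjacencies to the ancestors on the root-to-$u$ path; each internal node has at most $d$ children, one per possible algorithm response. Since level $i$ contains at most $d^i$ nodes, $|V(T)|\le 1+d+\cdots+d^{N-1}=\frac{d^N-1}{d-1}$.

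The first step is a pruning reduction: without loss of generality, at each node $u$ the adversary connects $v_u$ to at most $d$ ancestors. The only purpose of an edge $v_u v_{u'}$ (for $u'$ an ancestor of $u$) is to forbid the algorithm-color of $v_{u'}$ at $v_u$, so one may retain at most one edge per color class the adversary wishes to block. This pruning preserves $K_r$-freeness of all revealed graphs (edge deletion is monotone) and preserves the adversary's winning power (the blocked color sets are unchanged). At any internal node the blocked set has size $<d$, hence $v_u$ has strictly fewer than $d$ ancestor-edges; at a leaf it has exactly $d$.

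Next I define $G$ on vertex set $\{v_u:u\in V(T)\}$ with $v_u v_{u'}\in E(G)$ whenever $u'$ is an ancestor of $u$ in $T$ and the pruned strategy at $u$ connects them. Ordering $V(G)$ by depth in $T$ yields a $d$-degenerate ordering, because the left-neighbors of $v_u$ are exactly its ancestor-neighbors in $T$, of which there are at most $d$ by the reduction. For $K_r$-freeness, any edge of $G$ joins comparable nodes in $T$, so any clique in $G$ is a chain in $T$ and lies on a single root-to-leaf path; along such a path the induced edges of $G$ are precisely those of one play of the game, which is $K_r$-free by assumption.

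To show $\chi(G)\ge d+1$, I would suppose for contradiction that $\phi:V(G)\to\{1,\ldots,d\}$ is a proper coloring and define an on-line algorithm that maintains a pointer $p\in V(T)$ initialized at the root, and upon each newly presented vertex outputs $\phi(v_p)$ before moving $p$ to its child labeled by that color. Since the revealed edges at $v_p$ are exactly edges of $G$, properness of $\phi$ ensures each color assignment is valid. The play must reach a leaf $u$ within $N$ steps; at $u$ the winning property means $v_u$'s adversary-neighbors $v_{u_k}$ have algorithm-colors $\phi(v_{u_k})$ covering all of $\{1,\ldots,d\}$, so no value in $\{1,\ldots,d\}$ can serve as $\phi(v_u)$, a contradiction. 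Combined with $\chi(G)\le d+1$ from $d$-degeneracy, this gives $\chi(G)=d+1$. The main subtlety I expect is the pruning step: it is essential for $d$-degeneracy and must simultaneously preserve $K_r$-freeness and the adversary's winning strategy, which should be checked carefully.
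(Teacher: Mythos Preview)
Your proposal is correct and follows essentially the same approach as the paper: the strategy tree $T$ you describe is exactly the paper's graph $\Gamma$ indexed by valid color sequences $(c_1,\ldots,c_{i-1})$, your pruning step is the paper's initial observation that Builder need never connect a new vertex to two previously same-colored vertices, and your pointer-following argument for $\chi(G)\ge d+1$ is the paper's iterative construction of a valid length-$N$ sequence from a hypothetical $d$-coloring $\phi$. The $K_r$-freeness and $d$-degeneracy verifications are likewise identical in spirit (cliques lie on root-to-leaf chains, and such chains induce a Builder-produced $K_r$-free graph).
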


The previous best known asymptotic bounds on $g_r(n)$ are $\Omega(n^{(r-2)/(r-1)})\le g_r(n)\le O\left(\frac{n}{\log^{(2r-4)} n}\right)$, where $\log^{(b)}(\cdot)$ denotes the $b$-fold iterated logarithm function, defined as $\log^{(b)}(x):=\log(\log^{b-1}(x))$ for $b\ge 2$. The above lower bound for $r=3$ is due to a well-known and relatively simple construction, see e.g.~\cite{dvorakkawab,gutow,kierstead}, which can be generalized to all $r\ge 3$. This was probably known before, but we could not find an explicit reference in the literature and thus shall include a proof for completeness in Section~\ref{sec:online}. The stated upper bound on $g_r(n)$ is due to Lov\'{a}sz, Saks and Trotter~\cite{lovasz}. While they state the result for on-line coloring graphs of chromatic number less than $r$, their coloring strategy applies more generally to $K_r$-free graphs.

Our next result yields the first improvement of the above mentioned upper bound on $g_r(n)$ due to Lov\'{a}sz, Saks and Trotter from 1989.
\begin{theorem}\label{thm:jacob}
Let $r\ge 3$. The maximum on-line chromatic number of $K_r$-free $n$-vertex graphs satisfies 
$$g_r(n)\le O_r\left(\frac{n \log^{(r-1)} n}{\log^{(r-2)} n}\right).$$
\end{theorem}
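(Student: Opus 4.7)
The plan is to prove the theorem by induction on $r$, refining the recursive framework of Lovász, Saks and Trotter. Their bound $O(n/\log^{(2r-4)} n)$ arises from a nested recursion that loses roughly two iterated logarithms per step, whereas the target bound $O_r\bigl(n \log^{(r-1)} n/\log^{(r-2)} n\bigr)$ loses only a single logarithm per step. The overall goal is therefore to show that one can halve the number of log-iterations appearing in the denominator.

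For the inductive step ($r \ge 4$), the strategy is to exploit the fact that in any $K_r$-free graph the neighborhood of every vertex is $K_{r-1}$-free. When a new vertex $v$ arrives, I would have the on-line algorithm simulate the inductive strategy on $v$'s already-processed neighborhood, thereby bundling the colors currently appearing in $N(v)$ into roughly $g_{r-1}(\text{left-degree of }v)$ independent ``super-classes''. The algorithm would then select $v$'s color from a structured palette partitioned into blocks of matching size, which costs only a single extra logarithmic factor per recursion level and produces a recurrence that telescopes to the stated bound.

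The main technical difficulty lies in the base case $r = 3$, which asks for an on-line algorithm that colors triangle-free graphs on $n$ vertices using $O(n \log\log n/\log n)$ colors. My plan is to split the palette into blocks of size $\ell \approx \log n/\log\log n$, giving $O\bigl(n(\log\log n)^2/\log^2 n\bigr)$ blocks in total. Upon the arrival of $v$, the algorithm checks whether some block has a color still unused in $N(v)$; if so, $v$ takes a fresh color from that block. Otherwise, every block is represented in $N(v)$, which forces $v$ to be ``heavy'' in the sense of having at least one distinct-colored neighbor per block. The number of such heavy events would then be controlled by exploiting triangle-freeness through a counting argument, in the spirit of the small-locally-sparse-degenerate machinery used to prove Theorem~\ref{thm:main2}. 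The main obstacle I foresee is making this heavy-event bound quantitatively tight, since triangle-freeness alone (as opposed to excluding a fixed bipartite subgraph) gives only limited direct structural control over the number of high-degree vertices that can be created in an on-line fashion; balancing the block size $\ell$ against a reserved ``overflow'' palette so that both accounts stay within $O(n\log\log n/\log n)$ simultaneously will be the technically delicate part.
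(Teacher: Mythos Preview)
Your proposal has genuine gaps in both the inductive step and the base case. For the inductive step, ``simulate the inductive strategy on $v$'s already-processed neighborhood'' is not a well-defined on-line procedure: when $v$ arrives, its left-neighbors have already been irrevocably colored without knowledge of $v$, and different future vertices have different, overlapping left-neighborhoods, so there is no single recursive instance one could have been running. Bundling the existing colors into $g_{r-1}(|N_L(v)|)$ super-classes after the fact does not determine which color to assign to $v$ now, and no explicit recurrence is written down that one could check telescopes to the target bound. For the base case $r=3$, your block scheme is essentially greedy coloring with a partitioned palette; you correctly flag that bounding the number of ``heavy'' vertices is the crux, but triangle-freeness only guarantees that $N(v)$ is independent, which does not by itself limit how many heavy vertices an adversary can create on-line. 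No invariant is proposed that would keep the overflow palette within $O(n\log\log n/\log n)$, so the $r=3$ case remains unproved.

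The paper sidesteps both issues with a different idea. It first proves (Proposition~\ref{prop:main2}) that for \emph{arbitrary} on-line graphs on $n$ vertices---no $K_r$-freeness assumed---one can on-line color with $O(n\log\log n/\log n)$ colors so that every color class is either independent or entirely contained in the neighborhood of some vertex. The mechanism is a two-palette scheme $L\cup[t]$ that maintains, for each color $j\in[t]$, a set $N_{j,h}$ of $L$-colored common neighbors of the current color-$j$ class, together with the geometric invariant $|N_{j,h}|\ge(t/2n)^{|C_{j,h}|-1}\,t/2$; it is this maintained invariant, rather than a post-hoc counting argument, that makes the analysis go through. The induction then starts at the trivial case $r=2$, and for $r\ge 3$ one recursively applies the $K_{r-1}$-free algorithm inside each non-independent color class (which is $K_{r-1}$-free because it lies in a single neighborhood). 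Hence the recursion acts on the \emph{output} color classes of a first-pass coloring, not on the left-neighborhoods of incoming vertices.
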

For example, for $r=3$ this improves the bound $g_3(n)\le O(n/\log\log n)$ of Lov\'{a}sz, Saks and Trotter roughly by a $\log n$-factor to $g_3(n)\le O(n\log\log n/\log n)$.

\paragraph*{\textbf{Smallest triangle-free $d$-degenerate $(d+1)$-chromatic graphs.}} As a consequence of the previously presented results, we determine the growth of the minimum order $f(d)$ of a triangle-free $d$-degenerate $(d+1)$-chromatic graph (for $d\in \mathbb{N}$) to be exponential in a polynomial of $d$, thus addressing the second of the two questions mentioned in the introduction.

\begin{theorem}\label{thm:main1}
    It holds that $e^{\Omega(d)}\le f(d)\le e^{O(d^2\log d)}$.
\end{theorem}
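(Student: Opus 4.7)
The plan is to prove the two bounds separately, each by directly invoking a result already stated in the excerpt; the deep content lies entirely in those ingredients.

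For the lower bound $f(d) \geq e^{\Omega(d)}$, I would simply apply Theorem \ref{thm:main2} to any triangle-free $d$-degenerate graph $G$ on $n$ vertices with $\chi(G)=d+1$. If $n > 2^{cd}$, then $n \geq e^{\Omega(d)}$ already. Otherwise $n \leq 2^{cd}$, and Theorem \ref{thm:main2} gives
\[ d+1 \leq \chi(G) \leq \frac{Cd}{\log(d/\log n)}, \]
which rearranges to $\log(d/\log n) \leq C'$ for some absolute constant $C'$, whence $\log n \geq d/2^{C'} = \Omega(d)$ and therefore $n \geq e^{\Omega(d)}$.

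For the upper bound $f(d) \leq e^{O(d^2\log d)}$, I would combine Proposition \ref{prop:obs} (with $r=3$) with the lower bound $g_3(n) \geq \Omega(\sqrt{n})$ mentioned right after the proposition (the special case $r=3$ of the $\Omega(n^{(r-2)/(r-1)})$ bound, whose proof is included in Section~\ref{sec:online}). From $g_3(N) \geq \Omega(\sqrt N)$ it follows that there exists $N = O(d^2)$ with $g_3(N) \geq d+1$; taking the smallest such $N$, Proposition \ref{prop:obs} produces a triangle-free $d$-degenerate graph of chromatic number $d+1$ on at most
\[ \frac{d^{N}-1}{d-1} \leq d^{N} \leq d^{O(d^2)} = e^{O(d^2 \log d)} \]
vertices, as required.

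The main obstacles lie not in this deduction itself but in the inputs we rely on: Theorem \ref{thm:main2}, which supplies the nontrivial chromatic upper bound in the sparse regime $n \leq 2^{cd}$, and the machinery behind Proposition \ref{prop:obs} together with the $\sqrt{N}$-type on-line construction for triangle-free graphs. Once these are granted, the deduction of Theorem \ref{thm:main1} is a short bookkeeping argument, and in particular no new combinatorial construction or coloring argument is needed at this stage.
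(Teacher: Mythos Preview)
Your proposal is correct and matches the paper's own derivation essentially verbatim: the lower bound is read off from Theorem~\ref{thm:main2} by the case split you describe, and the upper bound comes from Proposition~\ref{prop:obs} with $r=3$ together with $g_3(N)\ge\Omega(\sqrt{N})$. The only cosmetic slip is writing $d/2^{C'}$ where $d/e^{C'}$ would be consistent with the paper's natural-log convention, but this does not affect the $\Omega(d)$ conclusion.
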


Indeed, the lower bound is a direct consequence of Theorem~\ref{thm:main2}, while the upper bound follows directly from Proposition~\ref{prop:obs}, using the above mentioned bound that $g_3(n)\ge \Omega(\sqrt{n})$. 

\paragraph*{\textbf{Extending to $K_r$-free graphs.}} A natural question is whether Theorem~\ref{thm:main1} can be extended to $K_r$-free graphs for $r\ge 4$. For $d\in \mathbb{N}$ let $f_r(d)$ denote the smallest order of a $K_r$-free $d$-degenerate, $(d+1)$-chromatic graph. The lower bound $g_r(n)\ge \Omega(n^{(r-2)/(r-1)})$ in conjunction with Proposition~\ref{prop:obs} yields that $f_r(d)\le e^{O(d^{(r-1)/(r-2)}\log d)}$ for every fixed $r$. Whether $f_r(d)$ is at least exponential in $d$ remains open. To generalize our lower bound on $f(d)=f_3(d)$ in Theorem~\ref{thm:main1} to $f_r(d)$ one would need to prove an analogue of Theorem~\ref{thm:main2} for $K_r$-free graphs. While such a result may be true, it seems difficult to achieve this with our proof method. Our coloring strategy for Theorem~\ref{thm:main2} is based on repeated sampling of independent sets from a probability distribution that is a modification of a distribution recently introduced by Martinsson~\cite{martinsson} to settle a conjecture of Harris~\cite{harris} on fractional coloring. Harris' conjecture~\cite{harris} stated that the \emph{fractional} chromatic number of every triangle-free $d$-degenerate graph $G$ is at most $O(d/\log d)$. The fractional chromatic number $\chi_f(G)$ of a graph $G$ is defined as $p^{-1}$, where $p\in (0,1]$ is maximal such that there exists a probability distribution on independent sets of $G$ such that every vertex is contained in a randomly drawn independent set with probability at least $p$.  Thus, Harris' conjecture is equivalent to the existence of a probability distribution on the independent sets of a triangle-free $d$-degenerate graph $G$ such that every vertex of $G$ is contained in a random independent set sampled from the distribution with probability $\Omega(\log d/d)$.  Martinsson~\cite{martinsson} achieved this using a novel and elegant random process. Our next result below shows that, quite surprisingly, this upper bound on the fractional chromatic number does not extend to $K_r$-free $d$-degenerate graphs for any $r\ge 4$. This presents a non-trivial obstacle for extending our proof method to the $K_r$-free case. 

\begin{restatable}{theorem}{thmlowerrestate}
    \label{thm:lower}
    For each integer $r \geq 3$ and sufficiently large $d\in \mathbb{R}_{\ge 1}$, there exists a $d$-degenerate, $K_r$-free graph $G$ with $\chi_f(G) \ge \Omega_r\left(\frac{d}{\log^{(r - 2)} d}\right)$. In other words, for every probability distribution on the independent sets of $G$, some vertex is contained in an independent set drawn from the distribution with probability at most $O_r\left(\frac{\log^{(r - 2)} d}{d}\right)$.
\end{restatable}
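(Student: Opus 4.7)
The plan is to construct, for each $r\ge 3$ and sufficiently large $d$, an explicit $d$-degenerate $K_r$-free graph $G$ with $\chi_f(G)=\Omega_r(d/\log^{(r-2)} d)$, and then certify the fractional chromatic number via LP duality. For the base case $r=3$, it suffices to take $G$ to be a nearly $d$-regular triangle-free graph on $n$ vertices with independence number $\alpha=O(n\log d/d)$, such as a typical output of the triangle-free random process (attaining the tight bounds of Shearer and Kim). Such $G$ is $d$-degenerate and yields $\chi_f(G)\ge n/\alpha=\Omega(d/\log d)$, matching the target.

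For $r\ge 4$, given a $K_{r-1}$-free graph $H$ supplied by the preceding case, I would construct $G$ by augmenting $H$ with a layer of high-degree ``hub'' vertices whose neighborhoods in $H$ are prescribed $K_{r-1}$-free dense subgraphs. Placing the hubs first in the degeneracy ordering keeps $G$ $d$-degenerate, while the $K_{r-1}$-freeness of each hub's neighborhood keeps $G$ itself $K_r$-free. To lower bound $\chi_f(G)$ I would exhibit, via LP duality, a vertex weighting $w:V(G)\to\mathbb{R}_{\ge 0}$ with total weight $\Omega_r(d/\log^{(r-2)} d)$ such that $\sum_{v\in I}w(v)\le 1$ for every independent set $I\subseteq V(G)$. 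The weights are concentrated on the hubs and their critical neighborhoods, and the per-independent-set bound follows from a layered argument: any independent set $I$ can meet only $O(\log^{(r-2)} d)$ hubs without forcing a $K_r$-like configuration, and the intersection of $I$ with the residual vertices is bounded by the fractional independence inherited from $H$.

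The main obstacle is achieving a \emph{multiplicative} rather than merely additive boost in $\chi_f$ as $r$ increases: naively adjoining a universal vertex to a $K_{r-1}$-free graph boosts $\chi_f$ only by one, whereas the target requires a factor of $\log^{(r-3)} d/\log^{(r-2)} d$ improvement over the inductive hypothesis. I expect this to be achieved by nesting dense subgraph structures within each hub's neighborhood, so that any independent set in $G$ is forced to spread across many hubs and, via the $K_{r-1}$-freeness of each hub's neighborhood, must consequently be small. Verifying this quantitative ``spreading'' phenomenon at each layer—and balancing the density of hubs against the degeneracy constraint—is the crux of the argument.
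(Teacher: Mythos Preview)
Your base case $r=3$ is fine (indeed, the triangle-free process or even Kim's $R(3,t)$ graphs give triangle-free $d$-degenerate graphs with $\chi_f=\Omega(d/\log d)$), but the inductive step is not a proof---it is a wish list. You never actually specify the construction, and the one structural choice you do commit to is problematic: if the ``hubs'' are placed \emph{first} in the degeneracy order and have high degree into $H$, then the vertices of $H$ acquire many hub left-neighbors, so the ordering no longer witnesses $d$-degeneracy unless each $v\in H$ lies in very few hub neighborhoods---which directly works against the ``dense nested subgraphs in each hub's neighborhood'' you need for the weight argument. Likewise, the assertion that an independent set ``can meet only $O(\log^{(r-2)}d)$ hubs without forcing a $K_r$-like configuration'' is unsupported: independent sets do not force cliques, and if the hubs themselves are independent, nothing stops $I$ from containing all of them. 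You have correctly identified the obstacle (one needs a \emph{multiplicative} gain in $\chi_f$ per clique-number increment, not the additive $+1$ from adjoining a universal vertex), but you have not overcome it.

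The paper's construction is rather different and resolves exactly this point. One does \emph{not} add a few high-degree hubs to $H$. Instead, given a $K_{r-1}$-free seed $\Gamma$, one builds $G_{\Gamma,n+1}$ by taking $d$ disjoint copies of $G_{\Gamma,n}$ and, for \emph{every} transversal $d$-tuple $C$, attaching a fresh copy of $\Gamma$ joined completely to $C$. The $\Gamma$-copies come \emph{last} in the ordering, so each of their vertices has left-degree at most $d+x$ (with $x$ the degeneracy of $\Gamma$), and $\omega$ rises by exactly one. The multiplicative gain comes from a recursive inequality proved via the LP-dual weight function: putting the optimal weights $w_n$ on the copies of $G_{\Gamma,n}$ and product weights $w_\Gamma(v)\prod_{u\in C}w_n(u)$ on $\Gamma_C$, one gets
\[
\frac{1}{\chi_f(G_{\Gamma,n+1})}\le \frac{1}{d+1}\Bigl(\frac{d}{\chi_f(G_{\Gamma,n})}+\frac{1}{\chi_f(\Gamma)}\Bigl(1-\frac{1}{\chi_f(G_{\Gamma,n})}\Bigr)^d\Bigr),
\]
which in the limit yields $d/\chi\le \log(d/\chi_f(\Gamma)+3)$. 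Iterating from $\Gamma=K_1$ gives $d/\chi_f\lesssim_r\log^{(r-2)}d$. The point you are missing is that the ``spreading'' is enforced not by a combinatorial argument about how $I$ meets hubs, but by the product structure of the weights across the $d$ copies, which turns the inductive bound into an exponential decay factor $(1-1/\chi)^d$.
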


\subsection{Applications.} 

\paragraph{While Theorem~\ref{thm:sparse} is a natural result in its own right, it also has several interesting applications to various graph coloring problems on sparse graphs. Here we present some preliminary examples of such applications, however we expect many more applications to be found in the future.}

\paragraph*{\textbf{Hadwiger's conjecture.}} Hadwiger's conjecture from 1943~\cite{hadwiger} is perhaps the most famous open graph coloring problem, which states that $K_t$-minor-free graphs have chromatic number at most $t-1$. Even for restricted families of graphs, this remains widely open. In particular, Hadwiger's conjecture for triangle-free graphs is a known open problem proposed and studied by various researchers~\cite{bucic,delcourt,dvorak,norin}. The currently best known upper bound on the chromatic number of triangle-free $K_t$-minor-free graphs in general is $Ct$ for some large constant $C$ due to Delcourt and Postle~\cite{delcourt}, and the best lower bound is $\lceil (t+1)/2\rceil$ by Dvo\v{r}\'{a}k and Kawarabayashi~\cite{dvorakkawab}.
    
Combining Theorem~\ref{thm:main2} with results of Dujmovi\'{c} et al.~\cite{dujmovic} and a recent breakthrough of Gorsky, Seweryn and Wiederrecht~\cite{gowi} on the Graph Minor Structure Theorem, we can prove the following.
    \begin{corollary}\label{cor:had1} For every $\varepsilon>0$ there exists $K>0$ such that the following holds for every $t\in \mathbb{N}$. Every triangle-free graph $G$ with no $K_t$-minor and $\Delta(G)\le e^{t^{1-\varepsilon}}$ satisfies $$\chi(G)\le \frac{Kt}{\sqrt{\log t}}.$$ 
    \end{corollary}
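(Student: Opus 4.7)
The plan is to combine the classical degeneracy bound for $K_t$-minor-free graphs with a structural decomposition into small pieces, and then apply Theorem~\ref{thm:main2} to each piece.

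First, by the classical Kostochka--Thomason theorem, $G$ is $d$-degenerate with $d=O(t\sqrt{\log t})$; in particular $G$ is triangle-free and $d$-degenerate. Next, I would invoke the layered product structure theorem of Dujmovi\'{c} et al.\ combined with the polynomial form of the Graph Minor Structure Theorem recently established by Gorsky, Seweryn and Wiederrecht, to partition $V(G)$ into $s=s(\varepsilon)=O_\varepsilon(1)$ classes $V_1,\ldots,V_s$ such that each connected component of $G[V_i]$ has at most $e^{O_\varepsilon(t^{1-\varepsilon/2})}$ vertices. At a high level, one embeds $G$ into a strong product $H\boxtimes P$ where $\mathrm{tw}(H)\le \mathrm{poly}(t)$ and $P$ is a path, chops the $P$-coordinate into intervals of bounded length $L=L(\varepsilon)$, and groups the intervals by residue modulo $s$, so that within a class the connected components are confined to short ``slabs'' in the product. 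The hypothesis $\Delta(G)\le e^{t^{1-\varepsilon}}$ (together with the polynomial-in-$t$ bounds on $H$) implies that each slab's connected components have bounded diameter in $G$ and hence at most $\Delta^{O_\varepsilon(1)}\le e^{O_\varepsilon(t^{1-\varepsilon/2})}$ vertices.

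Given such a decomposition, Theorem~\ref{thm:main2} applies to each component $H'$: with $n'=|V(H')|\le e^{O_\varepsilon(t^{1-\varepsilon/2})}\le 2^{cd}$ for large $t$, we compute $d/\log n'\ge \Omega(t^{\varepsilon/2}\sqrt{\log t})$ and so $\log(d/\log n')=\Omega(\varepsilon\log t)$, yielding
\[\chi(H')\le \frac{Cd}{\log(d/\log n')}=O_\varepsilon\!\left(\frac{t\sqrt{\log t}}{\log t}\right)=O_\varepsilon\!\left(\frac{t}{\sqrt{\log t}}\right).\]
Coloring each class $V_i$ using a single palette of this size (legitimate because its components are pairwise disjoint and non-adjacent) and using disjoint palettes across the $s$ classes, we obtain $\chi(G)\le s\cdot O_\varepsilon(t/\sqrt{\log t})=O_\varepsilon(t/\sqrt{\log t})$, which is the claimed bound with $K=K(\varepsilon)$.

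The main obstacle is the decomposition step. One must verify that the product-structure output, together with the bounded max degree, actually produces connected components of size at most $e^{O_\varepsilon(t^{1-\varepsilon/2})}$; in particular, that the polynomial parameters let one bound the $G$-diameter of each slab in terms of $L$ alone. The polynomial-in-$t$ dependence of the Gorsky--Seweryn--Wiederrecht result is essential here: with only the earlier, super-polynomial forms of the Graph Minor Structure Theorem, the resulting component sizes would exceed the threshold $2^{cd}$ and fall outside the effective range of Theorem~\ref{thm:main2}. A careful choice of the interval length $L$ in the chopping step balances the exponential blow-up from the max-degree bound against the need for only $O_\varepsilon(1)$ classes.
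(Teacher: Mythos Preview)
Your overall strategy matches the paper's exactly: Kostochka--Thomason for the degeneracy bound $d=O(t\sqrt{\log t})$, a clustered decomposition of $G$ into small pieces via Dujmovi\'{c} et al.\ combined with the polynomial Graph Minor Structure Theorem of Gorsky--Seweryn--Wiederrecht, and then Theorem~\ref{thm:main2} (or~\ref{thm:sparse}) applied to each piece, followed by summing over a bounded number of classes.

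The difference is in how the decomposition is obtained, and here your sketch has a gap. You propose embedding $G$ into $H\boxtimes P$ with $\mathrm{tw}(H)\le\mathrm{poly}(t)$ and chopping $P$ into slabs, bounding component size via a diameter argument as $\Delta^{O_\varepsilon(1)}$. But the product-structure theorem in the simple form $G\subseteq H\boxtimes P$ with bounded-treewidth $H$ is known for planar and bounded-genus graphs, not for general $K_t$-minor-free graphs; the latter require the full Robertson--Seymour machinery (tree-decompositions with almost-embeddable torsos, apex sets, vortices), and your ``bounded $G$-diameter per slab'' claim is not immediate in that setting. You correctly flag this as the main obstacle, but as written it is not a proof.

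The paper avoids this by quoting a clean black box (Lemma~\ref{lem:Ktdecomposition}): every $K_t$-minor-free graph of maximum degree $\Delta$ admits a $p(\Delta t)$-clustered $3$-coloring, where $p$ is a fixed polynomial. This is extracted from Theorems~10,~19,~20 of Dujmovi\'{c} et al.\ (which already handle the full GMST structure) together with the polynomial bound of Gorsky--Seweryn--Wiederrecht on the parameter $k_H$. This gives \emph{polynomial} component size $O((\Delta t)^D)$ and exactly $3$ classes, which is both stronger and cleaner than your $\Delta^{O_\varepsilon(1)}$ bound with $O_\varepsilon(1)$ classes. After that, the endgame is identical to yours: $|V(F)|\le 2^{cd}$, Theorem~\ref{thm:main2} gives $\chi(F)=O_\varepsilon(t/\sqrt{\log t})$, and summing over the $3$ classes finishes.
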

    Thus triangle-free graphs satisfy Hadwiger's conjecture with an asymptotically improved bound, unless they have vertices of huge degree. This result also generalizes with the same asymptotic bounds to $H$-free graphs where $H$ is any almost bipartite graph (i.e., can be made bipartite by deleting a vertex).

    Another interesting consequence is the following novel structural result about counterexamples to Hadwiger's conjecture, stating that they either have complete bipartite subgraphs with one exponentially large side or very small and dense subgraphs in the neighborhood of some vertex.
\begin{corollary}\label{cor:had2}
Let $t\in \mathbb{N}$ and let $G$ be any counterexample to Hadwiger's conjecture for parameter~$t$. Then $G$ has a $K_{a,b}$-subgraph where $a=(\log t)^{1/2-o(1)}$ and $b=e^{t^{1-o(1)}}$, or a subgraph $H\subseteq G$ with $|V(H)|\le t$ and $|E(H)|\ge t^{2-o(1)}$ which is contained in the neighborhood of some vertex.
    \end{corollary}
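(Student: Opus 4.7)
The plan is to prove the corollary by contradiction using Theorem~\ref{thm:sparse}. Suppose $G$ is a counterexample to Hadwiger's conjecture with parameter $t$ (so $G$ has no $K_t$-minor and $\chi(G)\ge t$), and assume that $G$ contains neither a $K_{a,b}$ subgraph with $a=(\log t)^{1/2-o(1)}$, $b=e^{t^{1-o(1)}}$, nor a subgraph $H\subseteq G$ with $|V(H)|\le t$ and $|E(H)|\ge t^{2-o(1)}$ contained in the neighborhood of some vertex. The goal is to derive $\chi(G)<t$, a contradiction.

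First, by the Kostochka-Thomason theorem, $G$ is $d$-degenerate for $d=O(t\sqrt{\log t})$; fix such a degeneracy ordering, so every left-neighborhood $L(v)$ satisfies $|L(v)|\le d$. If $|L(v)|\le t$, the no-dense-$H$ hypothesis directly yields $|E(G[L(v)])|<t^{2-o(1)}$; otherwise, a uniformly random $t$-subset of $L(v)$ spans $|E(G[L(v)])|\cdot(t/|L(v)|)^2$ edges in expectation, so the hypothesis gives $|E(G[L(v)])|\le t^{-\Omega(1)}|L(v)|^2 \le d^2/f$ with $f\ge 2^{C_1\sqrt{\log t}}$, for any fixed $C_1$ (after tightening the $o(1)$ in $t^{2-o(1)}$). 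In particular, $\log f\ge C_1\sqrt{\log t}$.

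Second, invoke the recent Gorsky-Seweryn-Wiederrecht refinement of the Graph Minor Structure Theorem together with the product structure theorem of Dujmović et al. These results decompose $G$ as a clique-sum of almost-embeddable pieces, each consisting of a base graph embedded on a surface of bounded Euler genus together with a bounded set of apex vertices. The $K_{a,b}$-free hypothesis forces every piece to have at most $e^{t^{1-o(1)}}$ vertices: otherwise, the $(\log t)^{1/2-o(1)}$ apex vertices of a large piece, together with an exponentially large subset of base vertices to which they are collectively joined, would produce a forbidden $K_{a,b}$. Since the chromatic number of a clique-sum equals the maximum chromatic number of its pieces (the pieces are themselves induced subgraphs of $G$), and $\chi(G)\ge t$, some piece $G_0\subseteq G$ satisfies $\chi(G_0)\ge t$ and $n:=|V(G_0)|\le e^{t^{1-o(1)}}$. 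The degeneracy ordering of $G$ restricts to $G_0$, with the same sparsity bound on left-neighborhoods.

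Finally, apply Theorem~\ref{thm:sparse} to $G_0$. Choosing the $o(1)$ in $b=e^{t^{1-o(1)}}$ sufficiently small ensures $\log(d/\log n)\ge C_2\sqrt{\log t}$ with $C_2$ arbitrarily large, while $\log f\ge C_1\sqrt{\log t}$ from the first step. Thus
$$
\chi(G_0)\;\le\;\frac{Cd}{\min\{\log f,\log(d/\log n)\}}\;\le\;\frac{C\cdot O(t\sqrt{\log t})}{\min(C_1,C_2)\sqrt{\log t}}\;<\;t
$$
for $\min(C_1,C_2)$ large enough, contradicting $\chi(G_0)\ge t$. The principal technical obstacle is the second step: the careful application of the Gorsky-Seweryn-Wiederrecht theorem to extract a small chromatic subgraph $G_0$, calibrated so that the apex count matches $a=(\log t)^{1/2-o(1)}$ and the piece size matches $b=e^{t^{1-o(1)}}$ in the statement of the corollary.
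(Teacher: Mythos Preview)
Your first and third steps are essentially the same as in the paper: Kostochka--Thomason gives $d$-degeneracy with $d=O(t\sqrt{\log t})$, the ``no dense subgraph in a neighborhood'' hypothesis yields left-neighborhoods with at most $d^2/f$ edges for $f=e^{\Theta(\sqrt{\log t})}$, and the final contradiction comes from Theorem~\ref{thm:sparse}. The problem is entirely in your second step, which contains two genuine gaps.

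First, the Graph Minor Structure Theorem does \emph{not} present $G$ as a clique-sum of almost-embeddable pieces that are induced subgraphs of $G$. What it gives is a tree decomposition whose \emph{torsos} are almost-embeddable; torsos are obtained from the bags by adding edges, so they are generally not subgraphs of $G$ at all. Conversely, the bags $G[B_x]$ are induced subgraphs, but $\chi(G)$ need not equal $\max_x \chi(G[B_x])$ unless the adhesion sets are actual cliques in $G$, which is not guaranteed. So you cannot extract a single piece $G_0\subseteq G$ with $\chi(G_0)\ge t$ from this decomposition. Second, even granting some notion of ``piece'', the claim that a large piece forces a $K_{a,b}$ via its apex set is unjustified: apex vertices are merely those whose deletion leaves an embeddable graph, and carry no complete-bipartite adjacency structure to the base.

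The paper circumvents both issues by a different route. Rather than hunting for one small high-chromatic piece, it uses a result of Ossona de Mendez, Oum and Wood to turn the $K_{a,b}$-free assumption into a partition of $V(G)$ into $a$ parts each inducing a subgraph of \emph{bounded maximum degree} (polynomial in $t^a b$). To each such part one then applies the clustered $3$-coloring for $K_t$-minor-free graphs of bounded maximum degree (this is where Dujmovi\'c et al.\ and Gorsky--Seweryn--Wiederrecht enter), producing a $3a$-coloring of $G$ whose monochromatic components all have at most $\mathrm{poly}(t^a b)$ vertices. Theorem~\ref{thm:sparse} is now applied to every such component, giving $\chi\le t/(4a)$ per component, hence $\chi(G)\le 3a\cdot t/(4a)<t$. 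The $K_{a,b}$-free hypothesis is thus used not to bound piece sizes in a structural decomposition, but to bound maximum degree after partitioning---a fundamentally different (and correct) mechanism.
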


\paragraph*{\textbf{Ramsey numbers of graphs with a given chromatic number.}} A well-known conjecture of Erd\H{o}s (cf.~Section~2.4.2 in~\cite{chunggraham}) states that there exists an absolute constant $c>0$ such that every graph $G$ with $\chi(G)=k$ satisfies $r(G)\ge c\cdot r(K_k)$ (here $r(G)$ denotes the graph Ramsey number). Using Theorem~\ref{thm:main2}, we can obtain the following structural result about potential counterexamples to Erd\H{o}s' conjecture: They either have an exponential number of vertices or a subgraph comparable in size and density to a $K_k$.
    \begin{corollary}\label{cor:erd}
    If a graph $G$ with $\chi(G)=k$ satisfies $r(G)<r(K_k)$, then $|V(G)|\ge e^{\Omega(k)}$ or $G$ contains a subgraph on $O(k)$ vertices with at least $\Omega(k^2)$ edges.
    \end{corollary}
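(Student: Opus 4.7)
My plan is to apply Theorem~\ref{thm:sparse} to a $k$-critical subgraph of $G$ and extract either $|V(G)|\ge e^{\Omega(k)}$ or the desired small dense subgraph. First, pass to a $k$-critical subgraph $H\subseteq G$: then $\chi(H)=k$, $\delta(H)\ge k-1$, and every subgraph of $H$ is a subgraph of $G$, so it suffices to establish the conclusion for $H$. Set $n:=|V(H)|$. If $n\ge 2^{c(k-1)}$ for the small absolute constant $c>0$ from Theorem~\ref{thm:sparse}, then $|V(G)|\ge e^{\Omega(k)}$ and the first alternative holds, so assume $n<2^{c(k-1)}$.

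Let $d$ be the degeneracy of $H$, so $d\ge k-1$, and fix a degeneracy ordering, in which every left-neighborhood contains at most $d$ vertices. Let $f\ge 1$ be the largest real number such that every left-neighborhood spans at most $d^2/f$ edges. By Theorem~\ref{thm:sparse},
\[
k=\chi(H)\le \frac{Cd}{\min\{\log f,\log(d/\log n)\}}.
\]
In the principal case $d=O(k)$, this forces $\min\{\log f,\log(d/\log n)\}=O(1)$. Either $\log(d/\log n)=O(1)$, giving $\log n=\Omega(d)=\Omega(k)$ and $|V(G)|\ge e^{\Omega(k)}$, contradicting our assumption; or $\log f=O(1)$, i.e., $f$ is bounded, and some left-neighborhood spans $\ge d^2/f=\Omega(k^2)$ edges on at most $d=O(k)$ vertices, producing the required dense subgraph.

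The remaining case $d=\omega(k)$ is the principal obstacle, as Theorem~\ref{thm:sparse} no longer confines the dense substructure to $O(k)$ vertices. Here $H$ contains a subgraph $F$ with $\delta(F)\ge d\gg k$. If $|V(F)|=O(d)$ then $F$ has edge density $\Omega(1)$, and averaging over random $O(k)$-subsets of $V(F)$ produces a subset spanning $\Omega(k^2)$ edges. If $|V(F)|\gg d$, I would invoke a K\H{o}v\'{a}ri--S\'{o}s--Tur\'{a}n-type argument on a bipartite piece of $F$ between two color classes of a proper $k$-coloring, leveraging $|V(F)|\le n<2^{ck}$ (so $|V(F)|^{1/k}=O(1)$) to extract a $K_{a,b}$-subgraph with $a+b=O(k)$ and $ab=\Omega(k^2)$. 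The main technical difficulty is to execute this last step uniformly across the range $d\le |V(F)|\le 2^{ck}$, which I expect to require additional structural insight beyond the degeneracy ordering used in Theorem~\ref{thm:sparse}.
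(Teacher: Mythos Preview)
Your proposal has a genuine gap: you never use the hypothesis $r(G)<r(K_k)$, and this is precisely the tool that eliminates your ``principal obstacle'' case $d=\omega(k)$. The paper's proof observes that if $d$ is the degeneracy of $G$, then $G$ contains a subgraph $F$ with $\delta(F)\ge d$, and a standard probabilistic argument gives $r(F)>2^{d/2}$. Combined with $r(F)\le r(G)<r(K_k)<4^k$, this forces $d<4k$. Thus the degeneracy is automatically $O(k)$, and your ``principal case'' argument (which is essentially the same as the paper's) finishes the proof cleanly via Theorem~\ref{thm:sparse} with $d'=4k$ and a fixed constant $f$.

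Your attempt to handle $d=\omega(k)$ directly is both unnecessary and unconvincing as written. In the subcase $|V(F)|\gg d$ you propose to ``invoke a K\H{o}v\'{a}ri--S\'{o}s--Tur\'{a}n-type argument\ldots to extract a $K_{a,b}$-subgraph with $a+b=O(k)$ and $ab=\Omega(k^2)$'', but K\H{o}v\'{a}ri--S\'{o}s--Tur\'{a}n gives edge bounds for $K_{a,b}$-\emph{free} graphs; it does not produce such a $K_{a,b}$ from a graph whose only known property is $\delta(F)\ge d$ on at most $2^{ck}$ vertices. Indeed, a random $d$-regular bipartite graph on $n\gg d$ vertices has no subgraph on $O(k)$ vertices with $\Omega(k^2)$ edges, so without the Ramsey hypothesis this subcase cannot be dispatched by density considerations alone. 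The fix is not additional structural insight but simply to use the one hypothesis you left on the table.
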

\paragraph*{\textbf{Coloring hereditary classes below the degeneracy bound.}} 
The following is a consequence of Theorem~\ref{thm:sparse}.
\begin{corollary}\label{cor:logbound}
For every almost bipartite graph $H$ and every $\varepsilon>0$ there exists $K=K(H,\varepsilon)>0$ such that every $d$-degenerate $H$-free graph $G$ on $n$ vertices satisfies $$\chi(G) \le K \max\left(\frac{d}{\log d},\log^{1+\varepsilon} n\right).$$
\end{corollary}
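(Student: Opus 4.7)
The plan is to apply Theorem~\ref{thm:sparse} to $G$ with the degeneracy ordering, after using the structure of $H$ to bound the edge density of each left-neighborhood. Set $h:=|V(H)|$ and fix a vertex $v\in V(H)$ with $H-v$ bipartite. I claim every neighborhood in $G$ is $K_{h,h}$-free: if some $u\in V(G)$ had a $K_{h,h}$-subgraph in its neighborhood with sides $X,Y$, then mapping $v\mapsto u$ and embedding the two parts of a bipartition of $H-v$ (each of size $\le h-1\le h$) injectively into $X$ and $Y$ would realize $H$ as a subgraph of $G$, contradicting $H$-freeness. By the K\H{o}v\'ari--S\'os--Tur\'an theorem, every $K_{h,h}$-free graph on $m$ vertices has at most $c_H\cdot m^{2-1/h}$ edges for some constant $c_H=c(H)$. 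Hence in the degeneracy ordering of $G$, every left-neighborhood has at most $d$ vertices and, being contained in a neighborhood of its right endpoint, at most $c_H\cdot d^{2-1/h}$ edges.

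I then case-split based on how $d$ compares with $\log^{1+\varepsilon}n$. Let $c$ be the constant from Theorem~\ref{thm:sparse} and set $C_1:=\max(1/c,1)$. If $d\le C_1\log^{1+\varepsilon}n$, the trivial degeneracy bound already yields $\chi(G)\le d+1=O_\varepsilon(\log^{1+\varepsilon}n)$. Otherwise $d>C_1\log^{1+\varepsilon}n\ge\log n/c$, so $n\le 2^{cd}$ and the hypothesis of Theorem~\ref{thm:sparse} is met. I set $f:=c_H^{-1}d^{1/h}$, so that $d^2/f\ge c_H\, d^{2-1/h}$ matches the edge bound above; for $d$ large in terms of $H$ we have $2<f<d^2$, and Theorem~\ref{thm:sparse} gives
\[
\chi(G)\le \frac{Cd}{\min\{\log f,\;\log(d/\log n)\}}.
\]
The first term satisfies $\log f = h^{-1}\log d - O_H(1) = \Omega_H(\log d)$. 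For the second, $d>C_1\log^{1+\varepsilon}n$ gives $\log n<(d/C_1)^{1/(1+\varepsilon)}$, hence $d/\log n \ge C_1^{1/(1+\varepsilon)}\,d^{\varepsilon/(1+\varepsilon)}$, and so $\log(d/\log n) \ge \tfrac{\varepsilon}{1+\varepsilon}\log d - O(1) = \Omega_\varepsilon(\log d)$. The denominator is thus $\Omega_{H,\varepsilon}(\log d)$, yielding $\chi(G)\le O_{H,\varepsilon}(d/\log d)$.

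Combining the two cases gives the stated bound, and the finitely many small-$d$ or small-$n$ corner cases (where $f\le 2$, or $\log n<1$, or $d$ is bounded by a constant depending on $H$) are absorbed into $K=K(H,\varepsilon)$. I do not anticipate any serious obstacle; the only non-routine content is the K\H{o}v\'ari--S\'os--Tur\'an reduction from almost-bipartite $H$-freeness to sparse left-neighborhoods, after which Theorem~\ref{thm:sparse} does the work. The main thing to monitor carefully is the choice of $C_1$: it must simultaneously enforce the hypothesis $n\le 2^{cd}$ of Theorem~\ref{thm:sparse} and ensure $\log(d/\log n)$ grows like $\log d$, so that the weaker of $\log f$ and $\log(d/\log n)$ is still $\Omega(\log d)$.
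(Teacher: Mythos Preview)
Your proposal is correct and follows essentially the same route as the paper: both case-split on whether $d$ exceeds roughly $\log^{1+\varepsilon}n$, use that $H$-freeness forces neighborhoods to be $K_{k,k}$-free (you take $k=h=|V(H)|$, the paper takes the minimal $k$ with $H\subseteq K_{1,k,k}$), apply K\H{o}v\'ari--S\'os--Tur\'an to get $f=\Theta_H(d^{1/k})$, and then invoke Theorem~\ref{thm:sparse} and estimate both terms in the minimum as $\Omega_{H,\varepsilon}(\log d)$. The only quibble is that your choice $C_1=\max(1/c,1)$ gives $cd>\log n$ (natural log) rather than $cd\ge \log_2 n$ as required for $n\le 2^{cd}$, but this is absorbed exactly as you say into the large-$n$ assumption and the final constant $K$.
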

Interestingly, this result allows us to show that degeneracy-based upper bounds for the chromatic number of graphs in hereditary classes can almost always be improved by a logarithmic factor. To be precise, consider any hereditary graph class $\mathcal{G}$ (i.e., closed under taking induced subgraphs) and let $d(n)$ denote the maximum of the average degree among $n$-vertex graphs in the class. If $d(\cdot)$ is monotone, the degeneracy bound yields that the chromatic number of $n$-vertex graphs in $\mathcal{G}$ is at most $d(n)+1$. Using Corollary~\ref{cor:logbound} we obtain an asymptotic improvement over this degeneracy bound in almost the full regime of possible functions $d(n)$.
\begin{corollary}\label{cor:hereditary}
    Let $\mathcal{G}$ be a hereditary graph class and let $d:\mathbb{N}\rightarrow \mathbb{R}_{\ge 1}$ be a monotone function satisfying $\log^{1+\varepsilon} n\le d(n)=o(n)$, such that every $n$-vertex graph $G\in \mathcal{G}$ has average degree at most $d(n)$. Then every $n$-vertex graph $G\in \mathcal{G}$ satisfies $\chi(G)\le O\left(\frac{d(n)}{\log d(n)}\right)$.
\end{corollary}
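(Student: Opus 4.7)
The plan is to deduce Corollary~\ref{cor:hereditary} from Corollary~\ref{cor:logbound} by identifying, for any $\mathcal{G}$ satisfying the hypothesis, a fixed bipartite graph that $\mathcal{G}$ must avoid.

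First, I would locate such a forbidden graph. The complete bipartite graph $K_{s,s}$ has $2s$ vertices and average degree exactly $s$. Since $d(n) = o(n)$, we have $d(2s)/s \to 0$ as $s \to \infty$, so we can fix an integer $s = s(\mathcal{G})$ with $d(2s) < s$. If $K_{s,s}$ were in $\mathcal{G}$, the hypothesis would force $s \le d(2s)$, a contradiction; hence $K_{s,s} \notin \mathcal{G}$, and by hereditarity $\mathcal{G}$ is $K_{s,s}$-free. Since $K_{s,s}$ is bipartite, it is certainly almost bipartite. Next, every $n$-vertex $G \in \mathcal{G}$ is $d(n)$-degenerate: any induced subgraph on $m \le n$ vertices lies in $\mathcal{G}$ and so has average degree at most $d(m) \le d(n)$ by monotonicity, hence contains a vertex of degree at most $d(n)$.

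Applying Corollary~\ref{cor:logbound} to the $d(n)$-degenerate, $K_{s,s}$-free graph $G$ with parameter $\varepsilon' := \varepsilon/2$ yields
$$\chi(G) \le K \cdot \max\left(\frac{d(n)}{\log d(n)},\; \log^{1+\varepsilon/2} n\right)$$
for some constant $K = K(s, \varepsilon')$ depending only on $\mathcal{G}$ and $\varepsilon$. It remains to show the second term is dominated by the first. If $d(n) \ge \log^{2+\varepsilon/2} n$, then since $\log d(n) \le \log n$ we get $d(n)/\log d(n) \ge d(n)/\log n \ge \log^{1+\varepsilon/2} n$. Otherwise $d(n) < \log^{2+\varepsilon/2} n$, so $\log d(n) = O(\log \log n)$; combining with the assumption $d(n) \ge \log^{1+\varepsilon} n$ gives $d(n)/\log d(n) = \Omega(\log^{1+\varepsilon} n/\log \log n)$, which dominates $\log^{1+\varepsilon/2} n$ for $n$ large, since $\log^{\varepsilon/2} n$ beats $\log \log n$. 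In either case $\chi(G) \le O(d(n)/\log d(n))$, as required.

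The whole argument reduces to the pigeonhole observation that any hereditary class with $d(n) = o(n)$ must be $K_{s,s}$-free for some fixed $s$, after which we only apply Corollary~\ref{cor:logbound} and compare the two terms in its maximum. I do not anticipate any serious obstacle in this plan.
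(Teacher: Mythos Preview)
Your approach is essentially the paper's: locate a fixed bipartite graph that $\mathcal{G}$ must avoid, then apply Corollary~\ref{cor:logbound} with parameter $\varepsilon/2$. Your route to the forbidden $K_{s,s}$ is in fact cleaner than the paper's---the paper first observes that $K_{n_0}$ and $K_{n_0,n_0}$ are excluded as \emph{induced} subgraphs and then invokes a Ramsey argument to upgrade this to exclusion of $K_{R(n_0,n_0),R(n_0,n_0)}$ as a \emph{subgraph}, whereas you use the average degree of $K_{s,s}$ directly.

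One small wrinkle to tighten: from $K_{s,s}\notin\mathcal{G}$ and hereditarity alone you only conclude that no $G\in\mathcal{G}$ contains $K_{s,s}$ as an \emph{induced} subgraph, while Corollary~\ref{cor:logbound} (via K\H{o}v\'ari--S\'os--Tur\'an in its proof) needs subgraph-freeness. This is immediate to patch with the same idea: if $G\in\mathcal{G}$ contained $K_{s,s}$ as a subgraph, the induced subgraph on those $2s$ vertices would lie in $\mathcal{G}$ and have average degree at least $s>d(2s)$, a contradiction. Your comparison of the two terms in the maximum is more detailed than the paper's one-line assertion and is correct.
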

There are several graph classes for which the best upper bound $d(n)$ on the maximal average degree lies in the range between $\log^{1+\varepsilon} n$ and $o(n)$, and to which Corollary~\ref{cor:hereditary} is thus applicable (for example graphs with a finite family of forbidden bipartite subgraphs containing a cycle and certain graph families defined by forbidden edge ordered subgraphs~\cite{gerbner,janzer}).
We remark that the bound given by Corollary~\ref{cor:hereditary} is actually tight up to constant factors: Let $\alpha\in (0,1)$ be any given constant and let us consider the graph class $$\mathcal{G}_\alpha:=\left\{G \text{ is a graph}\bigg\vert e(H)\le \frac{100}{1-\alpha}|V(H)|^{\alpha+1}\ \text{ for all }H\subseteq G\right\}.$$ Clearly, $\mathcal{G}_\alpha$ is hereditary, and the function $d(n):=\frac{200}{1-\alpha} n^\alpha$ forms an upper bound on the maximum of the average degrees of $n$-vertex graphs in $\mathcal{G}_\alpha$. On the other hand, it is not difficult to check that with high probability as $n\rightarrow \infty$, a random graph $G(n,p)$ with $p=n^{\alpha-1}$ is a member of $G_\alpha$ and has chromatic number $\Theta\left(\frac{np}{\log(np)}\right)=\Theta\left(\frac{d(n)}{\log(d(n))}\right)$.

\paragraph*{\textbf{Coloring beyond planar graphs.}} Theorem~\ref{thm:main2} also has applications to bounds on the chromatic number of beyond planar graphs with local restrictions such as triangle-freeness. Here we give one such example. For integers $g,k\in \mathbb{N}$, a graph is called \emph{$(g,k)$-planar} if it can be drawn on a surface of Euler genus $g$ with at most $k$ crossings per edge. In the special case $g=0$, this is the well-studied class of $k$-planar graphs. Ossona de Mendez, Oum and Wood~\cite{demendez} proved that $(g,k)$-planar graphs are $O(\sqrt{(g+1)(k+1)})$-degenerate, which also implies that their chromatic number is $O(\sqrt{(g+1)(k+1)})$ (see also~\cite{wood}). Combining Theorem~\ref{thm:main2} with a result on clustered coloring by Wood~\cite{wood}, we obtain an asymptotically improved bound for triangle-free graphs.
    \begin{corollary}\label{cor:gk}
   Triangle-free $(g,k)$-planar graphs have chromatic number $O\left(\frac{\sqrt{(g+1)(k+1)}}{\log ((g+1)(k+1))}\right)$.
    \end{corollary}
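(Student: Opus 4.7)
The plan is to combine Theorem~\ref{thm:main2} with a clustered coloring decomposition to reduce the problem to triangle-free $d$-degenerate graphs of small order. Set $d = C_0 \sqrt{(g+1)(k+1)}$ with $C_0$ large enough that every $(g,k)$-planar graph is $d$-degenerate, as provided by the theorem of Ossona de Mendez, Oum and Wood. Since $O(d/\log d)$ and $O\!\left(\sqrt{(g+1)(k+1)}/\log((g+1)(k+1))\right)$ agree up to constants, it suffices to prove $\chi(G) \le O(d/\log d)$ for every triangle-free $(g,k)$-planar graph $G$.

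If $\log|V(G)| \le d^{1/2}$, Theorem~\ref{thm:main2} applies directly and gives $\chi(G) \le Cd/\log(d/\log n) \le Cd/\log(d^{1/2}) = O(d/\log d)$. For general $G$, I would invoke Wood's clustered coloring theorem for $(g,k)$-planar graphs to obtain a partition $V(G) = V_1 \cup \cdots \cup V_c$ with $c = O(1)$ parts such that every connected component of each induced subgraph $G[V_i]$ has order at most some $M = M(g,k)$ whose logarithm is polynomial in $\log((g+1)(k+1))$ (and hence at most $d^{1/2}$ for large $d$). Each such connected component $H$ inherits triangle-freeness and $d$-degeneracy from $G$ and has $|V(H)| \le M \le 2^{d^{1/2}}$, so Theorem~\ref{thm:main2} gives $\chi(H) \le O(d/\log d)$. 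Using one common palette of $O(d/\log d)$ colors across the vertex-disjoint components of each $G[V_i]$, and disjoint palettes across the $c$ parts, yields $\chi(G) \le c \cdot O(d/\log d) = O(d/\log d)$, as required.

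The main obstacle is securing a clustered coloring with a constant number of colors $c$ and cluster size $M$ that is subexponential in $d$: the target bound is delicate and collapses if $c$ grows with $d$ or if $M$ is, say, doubly exponential in $d$. If the off-the-shelf clustered coloring results for $(g,k)$-planar graphs are not sharp enough for this, a natural fallback is to construct the partition from scratch by recursively splitting $G$ along a balanced sublinear separator theorem for $(g,k)$-planar graphs, producing clusters of order at most $2^{O(\sqrt{d})}$ organized into $O(1)$ ``layers'' of the recursion tree, with each layer then serving as one color class of the clustered coloring.
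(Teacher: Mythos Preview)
Your proposal is correct and matches the paper's proof: the paper invokes precisely Wood's clustered coloring result (his Proposition~45), which gives a $12$-clustered coloring with cluster size $O((g+1)^{9/2}(k+1)^{7/2})$, so $c=12$ and $\log M = O(\log((g+1)(k+1)))$ exactly as you hoped, making your separator fallback unnecessary. The only cosmetic difference is that the paper applies Theorem~\ref{thm:sparse} rather than Theorem~\ref{thm:main2} to each component, but only because it simultaneously proves the almost-bipartite generalization; for the triangle-free statement your use of Theorem~\ref{thm:main2} is entirely sufficient.
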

    As an example, this shows that triangle-free $k$-planar graphs have chromatic number at most $O(\frac{\sqrt{k}}{\log k})$, improving over the best possible $\Theta(\sqrt{k})$ bound for the chromatic number of general $k$-planar graphs~\cite{pach}. Corollary~\ref{cor:gk} generalizes, with the same asymptotic bounds, to $H$-free $(g,k)$-planar graphs where $H$ is any almost bipartite graph.

\medskip

\paragraph*{\textbf{Notation and Terminology.}}
Throughout the manuscript, we denote by $\exp(\cdot)$ the exponential function and by $\log(\cdot)$ the \emph{natural} logarithm. For a natural number $n$, we denote $[n]:=\{1,\ldots,n\}$. Given a graph $G$ equipped with a linear ordering $v_1,\ldots,v_n$ of its vertices, for every $i\in [n]$ we denote by $N_L(v_i):=N_G(v_i)\cap \{v_1,\ldots,v_{i-1}\}$  and $N_R(v_i):=N_G(v_i)\cap \{v_{i+1},\ldots,v_n\}$ the \emph{left and right neighborhood of $v_i$}, respectively. Given a subset $U$ of vertices of a graph, we denote by $G[U]$ the induced subgraph of $G$ with vertex set $U$. 

\section{Proofs of Theorems~\ref{thm:main2} and~\ref{thm:sparse}}
The proofs of Theorems~\ref{thm:main2} and~\ref{thm:sparse} will rely on repeated sampling of independent sets from a modification of the probability distribution on independent sets introduced by Martinsson. Martinsson's random process iteratively updates non-negative real-valued weights on vertices and uses those to guide the choice of the random independent set. The modification we use here allows for a better control on the growth of these weights, in particular our process ``caps'' weights if they grow to large. This is important for technical reasons in our applications of this distribution, for example it enables us to effectively use Azuma's inequality in our proofs of Theorems~\ref{thm:main2} and~\ref{thm:sparse}.

The following statement establishes a distribution on the independent sets of a triangle-free graph that has the suitable properties necessary for our later arguments.

\begin{lemma} \label{lem:main}
    Let $d\in \mathbb{R}_{\ge 1}$, $0 < \alpha < \frac{\log d}{4d}$ and $G$ be a $d$-degenerate triangle-free graph. Let $v_1,\ldots,v_n$ be a linear ordering witnessing that $G$ is $d$-degenerate, i.e., such that $|N_L(v_i)|\le d$ for all $i\in [n]$. Then there exists a probability distribution $\mathcal{D}$ on the independent sets of $G$ with the following properties.

    \begin{enumerate}[label=\alph*)]
        \item \label{prop:large-prob} For every $i \in [n], \, \Pr_{I \sim \calD}[v_i \in I] \ge \alpha / 4.$
        \item \label{prop:prob-not-too-big} For every $i \in [n]$ and every independent set $S \subseteq \{v_1, \dots, v_{i-1}\},$ it holds that
        \[ \Pr_{I \sim \calD}[v_i \in I \, \vert \, I \cap \{v_1, \dots, v_{i-1}\} = S] \le \alpha \cdot e^{2 \alpha d}. \]
        \item \label{prop:indepofwhatcomesafter} For every $i\in [n]$, the distribution of the set $I\cap \{v_1,\ldots,v_i\}$ where $I$ is sampled from $\mathcal{D}$ depends only on the ordered induced subgraph $G[\{v_1,\ldots,v_{i}\}]$ of $G$.
    \end{enumerate}
\end{lemma}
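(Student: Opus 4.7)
My approach is to construct $\mathcal{D}$ as the law of an independent set produced by an adaptation of Martinsson's sequential random process on triangle-free graphs, equipped with a cap on the inclusion probabilities that immediately yields property (b). The vertices are processed in the given degeneracy order $v_1,\dots,v_n$. At step $i$, vertex $v_i$ is declared outside $I$ if some $v_j\in N_L(v_i)$ already lies in $I$ (``blocked''). Otherwise, $v_i$ is added to $I$ by an independent Bernoulli trial whose parameter $p_i$ is a function of the process so far, and specifically of the number of left-neighbors of $v_i$ that were ``considered but rejected'' (unblocked but whose Bernoulli trial failed). In the spirit of Martinsson's amplification, $p_i$ grows multiplicatively in this count, e.g.\ $p_i = \alpha(1+\alpha)^{k_i}$; the modification is to replace this by $\min\{p_i, \alpha e^{2\alpha d}\}$.

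\textbf{Properties (c) and (b).} Property (c) is immediate from the sequential nature of the process: the rule at step $i$ only uses the induced ordered subgraph on $\{v_1,\dots,v_i\}$ together with earlier coin flips, so the distribution of $I \cap \{v_1,\dots,v_i\}$ only depends on $G[\{v_1,\dots,v_i\}]$. Property (b) is then enforced by the cap: conditionally on any history realising $I\cap\{v_1,\dots,v_{i-1}\}=S$, the probability that $v_i\in I$ is either $0$ (if $S$ contains a left-neighbor of $v_i$) or the capped $p_i$, which is at most $\alpha e^{2\alpha d}$.

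\textbf{Property (a).} This is the main content, and the step where triangle-freeness is crucial. Fix $v_i$ and set $M := N_L(v_i)$, so $|M|\le d$, and by triangle-freeness $M$ is an independent set in $G$. Condition on the $\sigma$-algebra $\mathcal F$ generated by all random coins \emph{except} the independent Bernoullis used to decide inclusion for vertices in $M$. After this conditioning, the subset $M_0\subseteq M$ of unblocked left-neighbors is determined, each of their parameters $p_j$ is determined, and the decisions $\xi_j=\mathbb{1}[v_j\in I]$ for $v_j\in M_0$ are \emph{mutually independent} $\mathrm{Bern}(p_j)$ variables. A direct calculation then gives
\[
\Pr[v_i\in I\mid \mathcal F] \;=\; \alpha(1+\alpha)^{|M_0|}\prod_{j\in M_0}(1-p_j) \;=\; \alpha\prod_{j\in M_0}\bigl((1+\alpha)(1-p_j)\bigr),
\]
and the task is to show that in expectation over $\mathcal F$ this is at least $\alpha/4$. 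Following Martinsson, this is controlled by a supermartingale/recursive argument propagated along the ordering, which shows that the amplification factor $(1+\alpha)$ sufficiently compensates for the drop $(1-p_j)$ on average.

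\textbf{Main obstacle.} The real difficulty is managing the interaction between the cap and Martinsson's amplification: without the cap, weights may grow far beyond $\alpha e^{2\alpha d}$, and the lower bound argument is sensitive to such growth. The plan is to argue that in the regime $\alpha<\tfrac{\log d}{4d}$, the cap is only activated along history paths whose total probability is a vanishing fraction of the ``typical'' contribution to $\Pr[v_i\in I]$. This will require isolating these capped events and showing that their deficit in $\Pr[v_i \in I]$ is absorbed into a loss of at most a constant multiplicative factor, which I budget for by aiming for $\alpha/4$ rather than $\alpha$. The choice of cap at $\alpha e^{2\alpha d}$ is exactly tuned so that this deficit is controlled while simultaneously delivering property~(b).
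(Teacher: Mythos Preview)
Your overall architecture matches the paper's: build $\mathcal D$ via a capped variant of Martinsson's sequential process so that (b) and (c) come for free, and then argue (a). The gap is in your argument for (a). The $\mathcal F$-conditioning you propose does not decouple the coins in $M=N_L(v_i)$ as claimed. Triangle-freeness gives $N_L(v_j)\cap M=\emptyset$ for $v_j\in M$, but it does \emph{not} prevent a left-neighbour $v_\ell\in N_L(v_j)$ from having a left-neighbour $v_m\in M$: the configuration $m<\ell<j<i$ with edges $v_mv_\ell,\,v_\ell v_j,\,v_m v_i,\,v_j v_i$ contains no triangle. Then whether $v_\ell\in I$---hence whether $v_j$ is blocked, hence whether $v_j\in M_0$ and what $p_j$ is---depends on the coin $\xi_m$, which you excluded from $\mathcal F$. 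So neither $M_0$ nor the $p_j$ are $\mathcal F$-measurable, and your displayed identity for $\Pr[v_i\in I\mid\mathcal F]$ fails. A secondary issue is that your candidate rule $p_i=\alpha(1+\alpha)^{k_i}$ amplifies by a \emph{fixed} factor $(1+\alpha)$, whereas Martinsson's process multiplies weights by $e^{w(v_\ell)}$, a factor depending on the neighbour's own current weight; this dependence is exactly what makes $w(v_j)$ a martingale along the process, and your variant lacks it, so the ``supermartingale/recursive argument'' you invoke is not actually available for this process.

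The paper repairs both points at once. It uses Martinsson's exact dynamics (inclusion probability $1-e^{-w(v_i)}$, multiplicative update by $e^{w(v_\ell)}$, with the cap implemented by zeroing any weight exceeding $\alpha e^{2\alpha d}$), and instead of your conditioning it introduces a \emph{coupled} process $\tilde w$ in which vertices of $M$ are deterministically not selected but still propagate their weights. This yields the change-of-measure identity $\mathbb E[f(w_{k-1}(v_k))]=\mathbb E[f(\tilde w_{k-1}(v_k))e^{-X}]$ with $X=\sum_{v_j\in M}\tilde w_{k-1}(v_j)$ and $\tilde w_{k-1}(v_k)=\alpha e^{X}$; the martingale property (only lowered by the cap, and here triangle-freeness is used to see that the updates affecting $\tilde w(v_j)$ come from vertices outside $M$) gives $\mathbb E[X]\le\alpha d$, and a single application of Markov's inequality on $X$ then delivers $\Pr[v_k\in I]\ge\alpha/4$. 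The coupling, not conditioning, is the missing idea.
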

\begin{proof}
    We consider the following random process to generate a random independent set $I$ in $G$, which is a modification of the aforementioned process by Martinsson~\cite{martinsson}.

    The process keeps track of a vertex weighting $w:V(G)\rightarrow \mathbb{R}_{\ge 0}$. The weights are initialized as $w(v_i)\leftarrow \alpha$ for every $i\in [d]$. The process then iterates through the numbers $i=1,\ldots,n$ in order, while updating the weights of the vertices and creating the independent set $I$, as follows:
    \begin{itemize}
        \item If $w(v_i) > \alpha \cdot e^{2 \alpha d}$, then set $w(v_i) \leftarrow 0$. Otherwise, keep its value unchanged.
        \item Next, with probability $1-e^{-w(v_i)}$ add $v_i$ to $I$ and set $w(v_j) \leftarrow 0$ for every $v_j\in N_R(v_i)$. Alternatively (with probability $e^{-w(v_i)}$), do not put $v_i$ into $I$ and set $w(v_j) \leftarrow w(v_j) \cdot e^{w(v_i)}$ for every $v_j\in N_R(v_i)$. 
    \end{itemize}

Notice that by definition of the process, the outcome $I$ is indeed an independent set with probability~$1$, since once a vertex $v_i$ is put into $I$ all its right-neighbors will have weight $0$ throughout the rest of the process, and hence will not be put into $I$ with probability $e^{0}=1$. 

Hence, the above process gives rise to a well-defined probability distribution $\mathcal{D}$ on the independent sets of $G$. We start by proving Property~\ref{prop:large-prob} of the distribution in the lemma, i.e., that $\mathbb{P}[v_k \in I]\ge \alpha / 4$ for every $k\in [n]$. So, fix $k\in [n]$ in the following. Similar to the analysis of Martinsson~\cite{martinsson}, to give the desired lower bound on $\mathbb{P}[v_k\in I]$ we will compare the above process to a slightly modified process to generate a weight function $\tilde{w}:V(G)\rightarrow \mathbb{R}_{\ge 0}$, as follows:

As before, initialize the weights as $\tilde{w}(v_i)\leftarrow \alpha$ for $i=1,\ldots,n$. Then, for $i=1,\ldots,n$, do the following.
\begin{itemize}
    \item If $\tilde{w}(v_i) > \alpha \cdot e^{2 \alpha d}$, then set $\tilde{w}_i(v_i) \leftarrow 0$. Then proceed as follows:
    \item If $v_i\notin N_L(v_k)$, then update the weights as in the previously described process for $w$ at step~$i$.
    \item If $v_i\in N_L(v_k)$, then we always perform updates corresponding to not including $v_i$ in the independent set. That is, for every $v_j\in N_R(v_i)$ update its weight as $\tilde{w}(v_j)\leftarrow \tilde{w}(v_j)\cdot e^{\tilde{w}(v_i)}$.
\end{itemize}

Note that the only difference between the above process for $\tilde{w}$ and the previous process for $w$ is that when considering a left-neighbor $v_i$ of $v_k$ of current weight $x\le \alpha e^{2 \alpha d}$, then we \emph{certainly} proceed by multiplying the weights of all its right neighbors by $e^{x}$, while we only do this with probability $e^{-x}$ in the previous process for $w$. Using this relationship, we can easily observe the following auxiliary claim (analogous to the corresponding claim in the paper by Martinsson~\cite{martinsson}). In the following, for any $i, j \in [n]$ we denote by $w_i(v_j)$ and $\tilde{w}_i(v_j)$, respectively, the random variables corresponding to the weights of vertex $v_j$ in the first, respectively second, random process after step $i$ (this includes $i=0$, when $w_0(v_j)=\tilde{w}_0(v_j)=\alpha$ for every $j\in [n]$). In the following, we consider the random variable $X:=\sum_{v_j\in N_L(v_k)}\tilde{w}_{k-1}(v_j)$.  
\begin{claim} \label{claim:f(w)}
For every function $f:[0,\infty)\rightarrow [0,\infty)$ with $f(0)=0$, we have 
$$\mathbb{E}[f(w_{k-1}(v_k))]= \mathbb{E}[f(\tilde{w}_{k-1}(v_k))e^{-X}].$$
\end{claim}
\begin{claimproof}
    The proof is analogous to the proof of Claim~2.2 in~\cite{martinsson}. The first $k-1$ steps of the process corresponding to the weights $w$ may be represented by a sequence $\mathbf{a} = (a_1, \dots, a_{k-1}),$ with each $a_i \in \{1, 2\},$ where $a_i = 1$ represents putting $v_i$ into the independent set $I$ and setting all the weights of $N_R(v_i)$ to $0$ and $a_i = 2$ represents not putting $v_i$ into $I$ and multiplying the weights of each vertex in $N_R(v_i)$ by $e^{w_i(v_i)}$. Analogously, let $\tilde{\mathbf{a}} \in \{1, 2\}^{k-1}$ denote the sequence of the process corresponding to weight $\tilde{w}$ and note that, by definition, we have $\tilde{\mathbf{a}}_i = 2$ for all $i$ such that $v_i \in N_L(v_k)$.

    Call a sequence $\mathbf{s} \in \{1,2\}^{k-1}$ valid if $s_i = 2$ for all $i$ such that $v_i \in N_L(v_k).$ Observe that if $\mathbf{s}$ is not valid, then $w_{k-1}(v_k) = 0,$ so such a sequence does not contribute to $\mbE[f(w_{k-1}(v_k))].$ Note that for any valid sequence $\mathbf{s}$, if $\mathbf{a} = \tilde{\mathbf{a}}=\mathbf{s},$ the weights $w$ and $\tilde{w}$ are the same throughout the entire process.     For a given valid sequence $\mathbf{s} \in \{1, 2\}^{k-1}$ and $i,j \in [k-1]$ let us denote by $w^{\mathbf{s}}_i(v_j)$ the weight of vertex $v_j$ after stage $i$ when making choices according to the sequence $\mathbf{s}$ (this definition applies to both processes $w$ and $\tilde{w}$, since they agree when $\mathbf{s}$ is valid). Recall that the weights $w(v_i)$ and $\tilde{w}(v_i)$ are not changed in either process after step $i.$ Comparing the transition probabilities of $w$ and $\tilde{w}$, we have that 
    \begin{align*} \mbE[f(w_{k-1}(v_k))] &= \sum_{\substack{\mathbf{s} \in \{1,2\}^{k-1}\\ \mathbf{s} \text{ is valid}}} \Pr[\mathbf{a}=\mathbf{s}] \cdot f(w_{k-1}^{\mathbf{s}}(v_k)) \\ &= \sum_{\substack{\mathbf{s} \in \{1,2\}^{k-1}\\ \mathbf{s} \text{ is valid}}} \exp\left(- \sum_{i : v_i \in N_L(v_k)} w_{i}^{\mathbf{s}}(v_i) \right) \cdot \Pr[\tilde{\mathbf{a}}= \mathbf{s}] \cdot f(w_{k-1}^{\mathbf{s}}(v_k))\\
    &=\sum_{\substack{\mathbf{s} \in \{1,2\}^{k-1}\\ \mathbf{s} \text{ is valid}}} \exp\left(- \sum_{i : v_i \in N_L(v_k)} w_{k-1}^{\mathbf{s}}(v_i) \right) \cdot \Pr[\tilde{\mathbf{a}}= \mathbf{s}] \cdot f(w_{k-1}^{\mathbf{s}}(v_k))\\ &=  \mathbb{E}[f(\tilde{w}_{k-1}(v_k))e^{-X}].
    \end{align*}
\end{claimproof}

Our next claim is analogous to Claim~2.3 in~\cite{martinsson}. In~\cite{martinsson}, it was shown that for every $v_i\in N_L(v_k)$ the expectation of $\tilde{w}_t(v_i)$ is constant when $i$ is fixed and $t$ ranges from $1$ to $k-1$. In the modified process here this is still true, except we might set the weight $\tilde{w}(v_i)$ to $0$ if it is too large at time $i$. For us it only matters that the expected weight does not increase, so we prove the following.

\begin{claim} \label{claim:submartingale}
    For every $v_i\in N_L(v_k)$ and every $t\in [k-1]$, we have that $\mathbb{E}[\tilde{w}_t(v_i)]\le \alpha.$
\end{claim}
\begin{claimproof}
    We prove this by induction on $t$. Note that $\tilde{w}_0(v_i) = \alpha,$ by definition. Consider a step $1 \le t < i$. If $v_i \not\in N_R(v_t),$ then $\tilde{w}_{t}(v_i) = \tilde{w}_{t-1}(v_i).$ If $v_i \in N_R(v_t),$ then observe that $v_t \not\in N_L(v_k),$ as otherwise $v_t, v_i, v_k$ form a triangle. Therefore, $\mbE[\tilde{w}_t(v_i)] = (1 - e^{-\tilde{w}_t(v_t)}) \cdot 0 + e^{-\tilde{w}_t(v_t)} \cdot (e^{\tilde{w}_t(v_t)} \tilde{w}_{t-1}(v_i)) = \tilde{w}_{t-1}(v_i),$ where we remark that $\tilde{w}_t(v_t)$ corresponds to the weight of $v_t$ after potentially setting it to $0$ at the beginning of step $t$ and this is the weight used to update the weight of $v_i$. At step $t=i$, we have $\tilde{w}_i(v_i) = \tilde{w}_{i-1}(v_i)$ or $\tilde{w}_i(v_i) = 0$. Finally, the weight $\tilde{w}(v_i)$ does not change in steps $t > i,$ finishing the proof.
\end{claimproof}

The following claim is an immediate consequence of Claim~\ref{claim:submartingale} and linearity of expectation.
\begin{claim} \label{claim:expectation-of-X}
    \[ \mbE[X] \le \alpha \cdot d. \]
\end{claim}
\begin{claimproof}
    Indeed, $\mbE[X] = \sum_{v_i \in N_L(v_k)} \mbE[\tilde{w}_{k-1}(v_i)] \le \alpha \cdot |N_L(v_k)| \le \alpha \cdot d$.
\end{claimproof}
\begin{claim} \label{claim:w_tilde}
    \[ \tilde{w}_{k-1}(v_k) = \alpha e^X. \]
\end{claim}
\begin{claimproof}
    For any $i \in N_L(v_k),$ the weight of $v_k$ gets multiplied by $e^{\tilde{w}_i(v_i)} = e^{\tilde{w}_{k-1}(v_i)}$ and otherwise stays the same. Therefore, $\tilde{w}_{k-1}(v_k) = \alpha \cdot \exp\left(\sum_{v_i \in N_L(v_k)} \tilde{w}_{k-1}(v_i)\right) = \alpha e^X.$
\end{claimproof}

\begin{claim}
    \[ \Pr[v_k \in I] \ge \alpha / 4.\]
\end{claim}
\begin{claimproof}
    Define a function $f \colon [0, \infty) \rightarrow [0, \infty)$ as 
    \[ f(x) = 
    \begin{cases}
        0, &\text{if } x > \alpha \cdot e^{2 \alpha d} \\
        1 - e^{-x}, &\text{otherwise}.
    \end{cases}
    \]

    Note that $f(0)=0$ and hence
    \[ \Pr[v_k \in I] = \mbE[f(w_{k-1}(v_k))] = \mbE[f(\tilde{w}_{k-1}(v_k)) e^{-X}] = \mbE[f(\alpha e^X) \cdot e^{-X}], \]
    by Claims~\ref{claim:f(w)}~and~\ref{claim:w_tilde}.

    Note that if $X \le 2 \alpha d,$ then $\alpha e^X\le \alpha \cdot e^{2 \alpha d} \le \frac{\log d}{4 d^{1/2}}\le 1.$ Therefore, in that case, it holds that $e^{- \alpha e^X} \le 1 - \alpha e^X / 2,$ and thus $f(\alpha e^X) \cdot e^{-X} = (1 - e^{-\alpha e^X}) \cdot e^{-X} \ge \alpha e^X / 2 \cdot e^{-X}=
    \alpha / 2.$

    Finally, recalling that $\mbE[X] \le \alpha d$ by Claim~\ref{claim:expectation-of-X} and using Markov's inequality, we have
    \begin{align*}
         \Pr[v_k \in I] &= \mbE[f(\alpha e^X) \cdot e^{-X}] \ge \Pr[X \le 2 \alpha d] \cdot \mbE[f(\alpha e^X) e^{-X} \vert X \le 2 \alpha d]\\
         &\ge 1/2 \cdot \alpha / 2 = \alpha / 4.
    \end{align*}
\end{claimproof}
The previous claim proves Property~\ref{prop:large-prob}. Finally, observe that, regardless of the previous choices, in step $k$, $v_k$ is included in $I$ with probability at most $1 - e^{- \alpha e^{2 \alpha d}} \le \alpha e^{2 \alpha d},$ thus showing Property~\ref{prop:prob-not-too-big}. Finally, Property~c) of the lemma is a direct consequence of the definition of the random process we use to create $\mathcal{D}$.
\end{proof}

In order to prove our result on coloring graphs with sparse left neighborhoods (Theorem~\ref{thm:sparse}), we need a generalization of Lemma~\ref{lem:main} to this more general setting. The formal statement is captured by the following corollary, which we obtain by bootstrapping Lemma~\ref{lem:main} via some appropriate subsampling.

\begin{corollary} \label{cor:sparseleft}
    Let $d\in \mathbb{R}_{\ge 1}$ and let $G$ be a $d$-degenerate graph. Let $v_1,\ldots,v_n$ be a linear ordering witnessing that $G$ is $d$-degenerate and let $f\in (2,d^2)$ be such that $|E(G[N_L(v_i)])|\le d^2/f$ for every $i \in [n]$. Finally, let $0<\alpha<\frac{\log f}{8\sqrt{f}}$ be a parameter. Then there exists a probability distribution $\mathcal{D}$ on the independent sets of $G$ satisfying the following.

    \begin{enumerate}[label=\alph*)]
        \item \label{prop:large-prob2} For every $i \in [n], \, \Pr_{I \sim \calD}[v_i \in I] \ge \frac{\alpha\sqrt{f}}{32 d}.$
        \item \label{prop:prob-not-too-big2} For every $i \in [n]$ and every independent set $S \subseteq \{v_1, \dots, v_{i-1}\},$ it holds that
        \[ \Pr_{I \sim \calD}[v_i \in I \, \vert \, I \cap \{v_1, \dots, v_{i-1}\} = S] \le \frac{\alpha\sqrt{f} \cdot e^{2 \alpha \sqrt{f}}}{2d}. \]
    \end{enumerate}
    \end{corollary}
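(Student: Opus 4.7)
The plan is to reduce to Lemma~\ref{lem:main} via random subsampling of the vertex set. Set $p := \sqrt{f}/(2d)$ and $M := \sqrt{f}$, sample $U \subseteq V(G)$ by including each vertex independently with probability $p$, and let $U'' \subseteq U$ consist of those $v_k \in U$ for which the set $N_L(v_k) \cap U$ is both an independent set in $G$ and of size at most $M$. The distribution $\mathcal{D}$ is then obtained by first sampling $U''$ and then drawing $I$ from the distribution produced by applying Lemma~\ref{lem:main} to the induced ordered graph $G[U'']$, with degeneracy parameter $M$ and weight parameter $\alpha$. A crucial observation is that the proof of Lemma~\ref{lem:main} invokes triangle-freeness only inside Claim~\ref{claim:submartingale}, and only via its consequence that every left-neighborhood is an independent set; hence the conclusions of Lemma~\ref{lem:main} still apply to $G[U'']$, whose left-neighborhoods are independent by the very definition of $U''$.

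To verify property~(a), a short Markov argument shows that $\Pr[v_k \in U'' \mid v_k \in U] \ge 1/4$: conditional on $v_k \in U$, the expected size of $N_L(v_k) \cap U$ is at most $pd = \sqrt{f}/2$, giving $\Pr[|N_L(v_k) \cap U| > M] \le 1/2$, while the expected number of edges it spans is at most $p^2 \cdot d^2/f = 1/4$, giving $\Pr[N_L(v_k) \cap U \text{ is not independent}] \le 1/4$, so a union bound yields $1 - 1/2 - 1/4 = 1/4$. Therefore $\Pr[v_k \in U''] \ge p/4 = \sqrt{f}/(8d)$, and Lemma~\ref{lem:main}(a) yields $\Pr[v_k \in I] \ge (\alpha/4) \cdot \Pr[v_k \in U''] \ge \alpha\sqrt{f}/(32d)$. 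The hypothesis $\alpha < \log M/(4M) = \log f/(8\sqrt{f})$ needed to invoke Lemma~\ref{lem:main} exactly matches the assumption of the corollary.

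For property~(b), decompose $\Pr[v_k \in I \mid I \cap \{v_1, \ldots, v_{k-1}\} = S]$ by conditioning on the random set $U''$. Lemma~\ref{lem:main}(b) applied to $G[U'']$ gives $\Pr[v_k \in I \mid v_k \in U'', U'', I \cap \{v_1, \ldots, v_{k-1}\} = S] \le \alpha e^{2\alpha M} = \alpha e^{2\alpha\sqrt{f}}$. Multiplying by $\Pr[v_k \in U'' \mid I \cap \{v_1, \ldots, v_{k-1}\} = S] \le \Pr[v_k \in U] = p$, where the key point is that $v_k \in U$ is sampled independently of all random choices on the vertices $v_1, \ldots, v_{k-1}$ that determine the prefix of $I$, yields the desired bound $\alpha\sqrt{f} e^{2\alpha\sqrt{f}}/(2d)$.

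The main conceptual step is the observation that the triangle-free hypothesis of Lemma~\ref{lem:main} is used only via local independence of left-neighborhoods, so that suitable subsampling produces a subgraph to which the lemma still applies directly. The main technical subtlety is the conditioning in property~(b): one must verify that $v_k \in U$ remains independent of the conditioning event ``$I \cap \{v_1, \ldots, v_{k-1}\} = S$'' so that no extra factor is introduced. Everything else is bookkeeping of constants.
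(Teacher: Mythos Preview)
Your proof is correct and follows essentially the same approach as the paper's: random subsampling with probability $p = \sqrt{f}/(2d)$, filtering to vertices whose sampled left-neighborhoods are small and independent, then applying Lemma~\ref{lem:main} to the resulting induced subgraph, with the independence of the coin flip ``$v_k \in U$'' from the prefix of $I$ (via Lemma~\ref{lem:main}\ref{prop:indepofwhatcomesafter}) furnishing property~(b). One small remark: your observation about how triangle-freeness is used inside the proof of Lemma~\ref{lem:main} is correct but unnecessary, since $G[U'']$ is in fact triangle-free outright---any triangle in $G[U'']$ would place two adjacent vertices in the left-neighborhood of its latest vertex, contradicting the defining property of $U''$---so the lemma applies directly as stated.
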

    \begin{proof}
    Set $p:=\frac{\sqrt{f}}{2d}\in (0,1)$ and let $U\subseteq V(G)$ be a random subset of vertices of $G$ created according to the following process: First, we create a random subset $V\subseteq V(G)$ in which every vertex of $G$ is included independently with probability exactly $p$. Then, a vertex $v_i$ is put into $U$ if and only if $v_i\in V$, $|N_L(v_i)\cap V|\le \sqrt{f}$ and $N_L(v_i)\cap V$ is an independent set in $G$. 

    Let $i\in [n]$ be arbitrary and let us estimate the probability that $v_i\in U$. Note that by Markov's inequality, we have that
    $$\mathbb{P}[|N_L(v_i)\cap V|>\sqrt{f}]<\frac{p|N_L(v_i)|}{\sqrt{f}}\le \frac{
pd}{\sqrt{f}}=\frac{1}{2}.$$
Similarly, we have
$$\mathbb{P}[N_L(v_i)\cap V \text{ not independent}]\le \mathbb{E}[|E(G[N_L(v_i)\cap V])|]=p^2|E(G[N_L(v_i)])|\le p^2\frac{d^2}{f}=\frac{1}{4}.$$
From this, we immediately conclude that $\mathbb{P}[v_i\in U]\ge \mathbb{P}[v_i\in V](1-\frac{1}{2}-\frac{1}{4})=\frac{p}{4}$ for every $i\in [n]$. 
    
    Note that by definition of the process to generate $U$, we will always have that the induced subgraph $G[U]$ is triangle-free and $d':=\sqrt{f}$-degenerate (the degeneracy is witnessed by the linear ordering obtained from $v_1,\ldots,v_n$ by omitting the vertices in $V(G)\setminus U$). Since $0<\alpha<\frac{\log f}{8\sqrt{f}}= \frac{\log d'}{4d'}$, we can apply Lemma~\ref{lem:main} to $G[U]$. Let $\mathcal{D}_U$ be the probability distribution on independent sets in $G[U]$ guaranteed by the lemma. Fixing $U$, from the properties guaranteed by the lemma, we then have
    $$\mathbb{P}_{I\sim \mathcal{D}_U}[v_i\in I]\ge \frac{\alpha}{4}$$ for every $v_i\in U$, and for every independent set $S\subseteq \{v_1,\ldots,v_{i-1}\}\cap U$ in $G$, it holds that
    $$\mathbb{P}_{I\sim \mathcal{D}_U}[v_i\in I|I\cap \{v_1,\ldots,v_{i-1}\}=S]\le \alpha\cdot e^{2\alpha d'}=\alpha\cdot e^{2\alpha\sqrt{f}}.$$

    Finally, let us consider the distribution $\mathcal{D}$ on independent sets of $G$ described by first sampling $U$ randomly according to the above process and then sampling an independent set from $\mathcal{D}_U$. We claim that it satisfies the desired properties. Indeed, for every $i\in [n]$, we have that
    $$\mathbb{P}_{I\sim \mathcal{D}}[v_i\in I]=\sum_{A\subseteq V(G):v_i\in A}\mathbb{P}_{I\sim \mathcal{D}}[v_i\in I|U=A]\cdot\mathbb{P}[U=A]=\sum_{A\subseteq V(G):v_i\in A}\mathbb{P}_{I\sim \mathcal{D}_A}[v_i\in I]\cdot\mathbb{P}[U=A]$$
    $$\ge \frac{\alpha}{4}\sum_{A\subseteq V(G): v_i\in A}\mathbb{P}[U=A]=\frac{\alpha}{4}\mathbb{P}[v_i\in U]\ge \frac{\alpha}{4}\cdot \frac{p}{4}=\frac{\alpha\sqrt{f}}{32d},$$ as desired. Next, consider any fixed independent set $S\subseteq \{v_1,\ldots,v_{i-1}\}$ in $G$. 
    We then have
    $$\mathbb{P}_{I\sim \mathcal{D}}[v_i\in I|I\cap \{v_1,\ldots,v_{i-1}\}=S]$$ $$=\sum_{\substack{A\subseteq V(G):\\ v_i\in A, S\subseteq A}}\mathbb{P}_{I\sim \mathcal{D}}[v_i\in I|I\cap \{v_1,\ldots,v_{i-1}\}=S, U=A]\cdot\mathbb{P}[U=A|I\cap \{v_1,\ldots,v_{i-1}\}=S]$$
    $$=\sum_{\substack{A\subseteq V(G):\\ v_i\in A, S\subseteq A}}\mathbb{P}_{I\sim \mathcal{D}_A}[v_i\in I|I\cap \{v_1,\ldots,v_{i-1}\}=S]\cdot\mathbb{P}[U=A|I\cap \{v_1,\ldots,v_{i-1}\}=S]$$
    $$\le \alpha e^{2\alpha\sqrt{f}}\sum_{\substack{A\subseteq V(G):\\ v_i\in A, S\subseteq A}}\mathbb{P}[U=A|I\cap \{v_1,\ldots,v_{i-1}\}=S]$$
    $$=\alpha e^{2\alpha\sqrt{f}} \mathbb{P}[v_i\in U|I\cap \{v_1,\ldots,v_{i-1}\}=S]$$
    $$\le \alpha e^{2\alpha\sqrt{f}} \mathbb{P}[v_i\in V|I\cap \{v_1,\ldots,v_{i-1}\}=S].$$
    Finally, note that the events $v_i\in V$ and $I\cap \{v_1,\ldots,v_{i-1}\}=S$ are independent. This is because, by definition of $V$, the event $v_i\in V$ is independent from $V\cap \{v_1,\ldots,v_{i-1}\}$, which by definition fully determines the set $U\cap \{v_1,\ldots,v_{i-1}\}$. Finally, by Lemma~\ref{lem:main},~\ref{prop:indepofwhatcomesafter} the latter set fully determines the distribution of $I\cap \{v_1,\ldots,v_{i-1}\}$ when $I$ is sampled from $\mathcal{D}_U$. Hence, we find that $$\mathbb{P}_{I\sim \mathcal{D}}[v_i\in I|I\cap\{v_1,\ldots,v_{i-1}\}=S]\le \alpha e^{2\alpha\sqrt{f}}\mathbb{P}[v_i\in V]=\alpha e^{2\alpha\sqrt{f}}p=\frac{\alpha\sqrt{f} e^{2\alpha\sqrt{f}}}{2d},$$ as desired. This concludes the proof of the corollary.
    \end{proof}
With Corollary~\ref{cor:sparseleft} at hand, we are now ready to present our proof of Theorem~\ref{thm:sparse}. Recall that Theorem~\ref{thm:sparse} also contains Theorem~\ref{thm:main2} as a special case, so in the following we will prove both of these theorems simultaneously. 
\begin{proof}[Proof of Theorem~\ref{thm:sparse}]
    We shall prove the statement with $c = 10^{-9}$ and $C = 10^4,$ using induction on $d$. Note that if $d \le e^{500}$ or $f\le e^{C}$, then the simple bound $\chi(G) \le d+1$ suffices to establish the claimed upper bound on $\chi(G)$, settling the induction basis.
    
    Moving on suppose $d> e^{500}$ and $f>e^{C}$, and let $G$ be an $n$-vertex $d$-degenerate graph, where $n \le 2^{cd}.$ Furthermore, let $v_1, \dots, v_n$ be a $d$-degenerate ordering of $G$ such that $N_L(v_i)$ spans at most $d^2/f$ edges in $G$ for every $i\in [n]$. Set $\ell = \frac{Cd}{3 \min\{\log f,\log(d/\log n)\}},$  $\beta = 0.1 \min\left\{\log f,\log\left(\frac{d}{\log n}\right)\right\}$ and $\alpha = \beta / \sqrt{f}.$ Observe that $\alpha < \frac{\log f}{8 \sqrt{f}}$ and $\beta>0.1\log\left(\frac{10^9}{\log 2}\right)>2$ (recall that we have $f>e^C$, so we know that $f$ is sufficiently large).
    We construct a sequence of induced subgraphs of $G$ as follows. Let $G_1 = G$. For $j = 1, \dots, \ell,$ consider the distribution $\calD_j$ on independent sets of $G_j$ given by Corollary~\ref{cor:sparseleft} applied with parameters $d$, $f$ and $\alpha$, and the $d$-degenerate ordering obtained from the sequence $v_1, \dots, v_n$ by only keeping the vertices in $V(G_j)$. Sample $I_j \sim \calD_j$ and let $G_{j+1} = G_j - I_j$.

    We will show that with positive probability, the graph $G_{\ell+1}$ is $(d/2)$-degenerate. Suppose this is the case. If we can show that $\chi(G_{\ell+1}) \le 2\ell,$ then we have $\chi(G) \le 3\ell = \frac{Cd}{\min\{\log f,\log (d / \log n)\}},$ as claimed. Next, we consider two cases. If $n \le 2^{cd/2},$ then by the induction hypothesis applied to $G_{\ell+1}$ with parameters $d/2$ and $f/4$ in place of $d$ and $f$, we obtain $\chi(G_{\ell+1}) \le \frac{C d / 2}{\min\{\log(f/4),\log (d / (2\log|V(G_{\ell+1})|))\}}$ $ \le \frac{2}{3} \frac{C d}{\min\{\log f, \log (d / \log n)\}} = 2\ell$ using that $f$ is large enough and $|V(G_{\ell+1})| \le n \le 2^{c d/2}$.
    On the other hand, if $n > 2^{cd / 2},$ we use the  simple bound $\chi(G_{\ell+1}) \le d/2 + 1,$ which implies $\chi(G_{\ell+1})\le 2\ell$ since then $2\ell \ge \frac{2 C d}{3\log (d / \log n)} \ge \frac{20000\cdot d}{3\log (2 \cdot 10^9)} > d.$

    Finally, it remains to prove that with positive probability, the graph $G_{\ell+1}$ is $(d/2)$-degenerate. It is enough to prove that for any fixed $k \in [n],$ we have
    \[ \Pr[ |N_L(v_k) \cap V(G_{\ell+1})| > d/2 ] < 1 / n. \]
    Indeed, then by the union bound, with positive probability the ordering obtained from $(v_1, \dots, v_n)$ by keeping the vertices in $V(G_{\ell+1})$ is a $(d/2)$-degenerate ordering of $G_{\ell+1}$. 

    So, fix $k \in [n]$. For $j \in [\ell+1],$ let $Y_j = |N_L(v_k) \cap V(G_j)|$ and for $j \in [\ell]$, let $X_j = Y_j - Y_{j+1} = |N_L(v_k) \cap V(G_j) \cap I_j|.$
    In the following, let $y_1,\ldots,y_j$ be an arbitrary but fixed sequence of integers with $y_{j}\ge d/2$.
    
    Observe that for every $j\in [\ell]$, by Corollary~\ref{cor:sparseleft}, Property~\ref{prop:large-prob2} we have 
    \[ \mbE[X_j|Y_1=y_1,\ldots,Y_{j}=y_{j}] \ge \frac{\alpha\sqrt{f}}{32d}\cdot\frac{d}{2} = \frac{\beta}{64}. \]
    Using concentration inequalities, we wish to show that $\Pr[\sum_{j=1}^\ell X_j < d/2] < 1/n,$ which would imply $\Pr[|N_L(v_k) \cap V(G_{\ell+1})| > d/2] < 1/n.$ In order to apply Azuma's inequality, we need to truncate the random variables $X_j$ to make them bounded without decreasing their expectation too much. Additionally, we need to consider the possibility that $Y_j \le d/2,$ in which case the conditional expectation $\mbE[X_j|Y_1=y'_1,\ldots,Y_{j}=y'_{j}]$ might be small. Of course, if $Y_j \le d/2,$ we are already done, so it is simply a matter of redefining the value of $X_j$ appropriately so we can formally apply Azuma's inequality.

    We start by showing that only a small proportion of the expectation of $X_j$ comes from the event when $X_j$ is larger than $\tau \coloneqq 4 \beta e^{2\beta}$. Note that by Corollary~\ref{cor:sparseleft}, Property~\ref{prop:prob-not-too-big2}, $X_j$, conditioned on $Y_1=y_1,\ldots,Y_j=y_j$, is stochastically dominated from above by $\Bin\left(d, \frac{\beta}{d} e^{2 \beta}\right)$. This implies that
%    \begin{align*} &\mbE[X_j \vert X_j \ge \tau, Y_1=y_1,\ldots,Y_j=y_j] \cdot \Pr[X_j \ge \tau|Y_1=y_1,\ldots,Y_j=y_j]\\ &\le \mathbb{E}\left[\Bin\left(d, \frac{\beta}{d} e^{2 \beta}\right)\bigg\vert \Bin\left(d, \frac{\beta}{d} e^{2 \beta}\right)\ge \tau\right] \cdot \mathbb{P}\left[\Bin\left(d, \frac{\beta}{d} e^{2 \beta}\right)\ge \tau\right]\\ &= \sum_{k=\lceil\tau\rceil}^{d}{k\binom{d}{k}\left(\frac{\beta}{d}e^{2\beta}\right)^k\left(1-\frac{\beta}{d}e^{2\beta}\right)^{d-k}}\le \exp(-\beta e^{2\beta})\sum_{k=\lceil\tau\rceil}^{d}{\frac{\left(\beta e^{2\beta}\right)^k}{(k-1)!}\frac{\left(1-\frac{1}{d}\right)\cdots\left(1-\frac{k-1}{d}\right)}{\left(1-\frac{\beta e^{2\beta}}{d}\right)^{k}}} \\ &\le (1+o_d(1))\exp(-\beta e^{2\beta})\sum_{k=\lceil\tau\rceil}^{\infty}\frac{\left(\beta e^{2\beta}\right)^k}{(k-1)!} <0.01, \end{align*} 
    \begin{align*} &\mbE[X_j \vert X_j \ge \tau, Y_1=y_1,\ldots,Y_j=y_j] \cdot \Pr[X_j \ge \tau|Y_1=y_1,\ldots,Y_j=y_j]\\ &\le \mathbb{E}\left[\Bin\left(d, \frac{\beta}{d} e^{2 \beta}\right)\bigg\vert \Bin\left(d, \frac{\beta}{d} e^{2 \beta}\right)\ge \tau\right] \cdot \mathbb{P}\left[\Bin\left(d, \frac{\beta}{d} e^{2 \beta}\right)\ge \tau\right]\\ &= \lceil\tau\rceil \cdot \Pr\left[\Bin\left(d, \frac{\beta}{d} e^{2\beta}\right) \ge \tau\right] + \sum_{k = \lceil\tau\rceil + 1}^{d} \Pr\left[\Bin\left(d, \frac{\beta}{d} e^{2\beta}\right) \ge k\right]\\ &\le \lceil\tau\rceil \cdot \exp(- \beta e^{2\beta}) + \sum_{k=\lceil\tau\rceil+1}^\infty \exp( -(k - \beta e^{2\beta})/ 3) \le 10^{-10},
    \end{align*} 
    where we used the standard Chernoff bound $\Pr[\Bin(n, p) \ge np + \lambda] \le \exp(-\lambda / 3)$ valid for $\lambda \ge np$, and that $\beta>2$. This implies that
    $$\mbE[X_j \vert X_j \le \tau,Y_1=y_1,\ldots,Y_j=y_j] \cdot \Pr[X_j \le \tau|Y_1=y_1,\ldots,Y_j=y_j] \ge \frac{\beta}{64}-10^{-10}\ge  \frac{\beta}{65}.$$
    We are ready to define our truncated random variables. For $j \in [\ell],$ we define a random variable $Z_j$ as follows.
    \[ Z_j = \begin{cases} 
        \beta, &\text{if } Y_j \le d/2,\\
        X_j, &\text{if } Y_j > d/2 \text{ and } X_j \le \tau,\\
        0, &\text{otherwise.}        
    \end{cases}
    \]

    Observe that by definition, $0\le Z_j\le \tau$  for every $j\in [\ell]$. Note that each of the random variables $Z_1,\ldots,Z_{j-1}$ is determined by the values of $Y_1,\ldots,Y_j$. Hence, it follows that for every $j\in [n]$ and every sequence $z_1,\ldots,z_{j-1}$ of integers we have (denoting by $\mathcal{Z}$ the event $Z_1=z_1,\ldots,Z_{j-1}=z_{j-1}$):
    \begin{align*}
        &\mathbb{E}[Z_j|\mathcal{Z}]=\beta\cdot \mathbb{P}\left[Y_j\le \frac{d}{2}\bigg\vert\mathcal{Z}\right]+\mathbb{E}\left[X_j\bigg\vert X_j\le \tau, \mathcal{Z},Y_j>\frac{d}{2}\right]\cdot \mathbb{P}\left[X_j\le \tau\bigg\vert \mathcal{Z},Y_j>\frac{d}{2}\right]\cdot \mathbb{P}\left[Y_j>\frac{d}{2}\bigg\vert\mathcal{Z}\right]\\
        &\ge \beta\cdot \mathbb{P}\left[Y_j\le \frac{d}{2}\bigg\vert\mathcal{Z}\right]+\frac{\beta}{65}\cdot \mathbb{P}\left[Y_j> \frac{d}{2}\bigg\vert\mathcal{Z}\right]\ge \frac{\beta}{65}.
    \end{align*}

    Finally, this implies that the sequence $(W_i)_{i=0}^{\ell}$ of random variables, defined as $W_i:=-\frac{\beta}{65}i+\sum_{j=1}^{i}Z_j$, forms a submartingale. Further, $|W_i-W_{i-1}|=|Z_i-\frac{\beta}{65}|\le \tau$ for every $i$, and so by Azuma's inequality, we have
    $$\mathbb{P}\left[\sum_{j=1}^{\ell}Z_j\le \frac{\beta}{65}\ell-\eps\right]=\mathbb{P}\left[W_\ell-W_0\le -\eps\right]\le \exp\left(-\frac{\eps^2}{2\ell\tau^2}\right)$$ for every $\eps>0$. 

    Next, note that whenever we have $Y_{\ell+1}>\frac{d}{2}$, then, since $Y_{\ell+1}=Y_1-\sum_{j=1}^{\ell}X_j\le d-\sum_{j=1}^{\ell}X_j$, we must have $\sum_{j=1}^{\ell}{X_j}<\frac{d}{2}$. Furthermore, since then also $Y_1,\ldots,Y_\ell>\frac{d}{2}$, by definition we then have $Z_j\le X_j$ for every $j\in [\ell]$, implying $\sum_{j=1}^{\ell}{Z_j}<\frac{d}{2}$. Altogether, we conclude
    
    \[ \Pr[ |N_L(v_k) \cap V(G_{\ell+1})| > d/2 ]\le \Pr\left[\sum_{j=1}^\ell Z_j < d/2\right]. \]

    Note that our choice of constants implies that $\frac{\beta}{65}\ell=\frac{C\cdot 0.1}{3\cdot 65}d>3d$. Hence, setting $\varepsilon:=\frac{\beta}{65}\ell-\frac{d}{2}>\frac{\beta}{100}\ell$ in the above Azuma bound, we conclude:

    \begin{align*}
        \Pr\left[\sum_{j=1}^\ell Z_j < d/2\right] &\le \exp\left(- \frac{\varepsilon^2}{2\ell \tau^2}\right) <\exp\left(-\frac{\beta^2\ell}{2\cdot 10^4\cdot \tau^2}\right) = \exp\left( \frac{-\ell}{32 \cdot10^4\cdot  e^{4\beta}} \right)\\
        &\le \exp\left(\frac{-d^{0.6} \log^{0.4} n}{96 \log(d / \log n)}\right)\le \exp\left(-\frac{(10^9/\log 2)^{0.6}}{96\cdot \log(10^9/\log 2)}\log n\right) < 1/n,
    \end{align*}
    where in the last inequality we used that $n \le 2^{10^{-9}d}$. This yields the desired probability bound, concluding the proof.
    
\end{proof}

\section{On-line chromatic number of graphs with bounded clique number}\label{sec:online}
In this section, we provide the proofs of Proposition~\ref{prop:obs}, Theorem~\ref{thm:jacob} and, as announced in the introduction, provide a proof of the lower bound $g_r(n)\ge \Omega(n^{(r-2)/(r-1))})$ for the on-line chromatic number of $K_r$-free graphs on $n$ vertices (Proposition~\ref{prop:Kr-free-game} below). 

Note that we can equivalently define the on-line chromatic number for a hereditary family of graphs $\calG$ using the following game. The game is played by two players: Builder and Painter. Initially, both players are given a positive integer $n$, representing the total number of rounds the game is played. In round $i$, Builder presents a new vertex $v_i$ and connects it to some of the previous vertices $v_1, \dots, v_{i-1}$ such that the resulting graph $G[\{ v_1, \dots, v_i\}]$ is in $\calG$. Then, Painter colors vertex $v_i$ using a color from a color palette $\cC_n$ such that $v_i$ gets a different color from all of its neighbors. Painter wins if they can color all $n$ vertices, while Builder wins otherwise. The on-line chromatic number for $n$-vertex graphs of $\calG$ is the minimum size of the palette $\cC_n$ such that Painter has a winning strategy. Now, we present the proof of Proposition~\ref{prop:obs} which was our motivation for studying the on-line chromatic number of $K_r$-free graphs.

We recall its statement for the reader's convenience.

\propobsrestate*
\begin{proof}
    First, observe that in the game-theoretic formulation of $g_r(N)$, there is a strategy for Builder such that they never connect $v_i$ to two vertices of the same color since doing so leaves the same set of colors to be used by Painter while potentially reducing future options for Builder. Thus, we may assume that before Painter uses $d+1$ colors, with each new vertex Builder adds at most $d$ incident edges to previous vertices. 

    Fix a strategy for Builder that shows $g_r(N) \ge d+1$ and in which Builder connects each vertex $v_i$ to at most $d$ previous vertices, as discussed above. Now, we produce a graph $\Gamma$ which, informally speaking, contains all possible outcomes of the game. Without loss of generality, we may assume that in the first $N-1$ rounds, Painter uses colors from $[d]$. Note that the outcome of the game in steps $1, \dots, i$ is determined by the sequence of colors $(c_1, \dots, c_i) \in [d]^i$ used by Painter. For $i\ge 0,$ we say a sequence $(c_1, \dots, c_i)$ is \emph{valid} if it produces a valid coloring when Builder plays their fixed strategy, where we use $\emptyset$ to denote the empty sequence. The set of vertices of $\Gamma$ will be $\{ w_\mathbf{c} \, \vert \, \mathbf{c} = (c_1, \dots, c_i) \text{ is valid and }0\le i\le N-1 \}.$ In particular, $w_{\emptyset}$ is a vertex of $\Gamma$. We shall call $\mathbf{c}$ the label of vertex $w_\mathbf{c}$. The set of edges of $\Gamma$ is as follows. Let $\mathbf{c} = (c_1, \dots, c_{i-1})$ for some $i \in [N-1]$. Consider the game defined by sequence $(c_1, \dots, c_{i-1})$ and suppose Builder's strategy connects the next vertex $v_i$ to some vertex $v_j$ (where $j < i$). Then, $\Gamma$ contains the edge $\{w_{(c_1, \dots, c_{j-1})}, w_{(c_1, \dots, c_{i-1})}\}$.

    Note that ordering the vertices of $\Gamma$ by the length of their labels produces a $d$-degenerate ordering, thus $\Gamma$ is $d$-degenerate. The first vertex of this ordering is $w_{\emptyset}$. Furthermore, observe that the only edges of $\Gamma$ are between two vertices whose labels are prefixes of each other. Moreover, for any valid sequence $\mathbf{c} = (c_1, \dots, c_i)$, the induced subgraph of $\Gamma$ on $\{ w_{\emptyset}, w_{(c_1)}, w_{(c_1, c_2)}, \dots, w_{(c_1, \dots, c_i)}\}$ is isomorphic to a graph potentially produced by Builder so it is $K_r$-free. Thus, $\Gamma$ is $K_r$-free as well. Additionally, we have $|V(\Gamma)| \le \sum_{i=0}^{N-1} d^i \le \frac{d^{N} - 1}{d-1}.$    

    Finally, it remains to show that $\chi(\Gamma) = d+1$. Clearly $\chi(\Gamma) \le d+1$ since $\Gamma$ is $d$-degenerate. Now, suppose, for the sake of contradiction, that $\chi(\Gamma) \le d$ so there is a proper coloring $\phi \colon V(\Gamma) \rightarrow [d]$ of $\Gamma$. We shall iteratively construct a valid sequence $(c_1, \dots, c_N)$ which is a contradiction since we fixed a winning strategy for Builder and thus no valid sequence of length $N$ exists. Clearly $\emptyset$ is a valid sequence. Now assume that $i \in [N]$ and we have already defined colors $c_1, \dots, c_{i-1}$ such that $\mathbf{c} = (c_1, \dots, c_{i-1})$ is valid. Then, we let the next color $c_i$ equal $\phi(w_\mathbf{c})$. It remains to show that $(c_1, \dots, c_i)$ is a valid sequence. Indeed, consider the outcome of the first $i-1$ rounds of the game if Painter uses the sequence $\mathbf{c}$ along with Builder's $i$-th move. Let $v_1, \dots, v_i$ be the vertices in the on-line game and let $G$ be the graph produced by Builder on these vertices. By definition, for any $j < i$ such that $v_jv_i \in E(G_i)$, $w_\mathbf{c}$ and $w_{(c_1, \dots, c_{j-1})}$ are joined by an edge in $\Gamma$. Since Painter colored $v_j$ by $c_j = \phi(w_{(c_1, \dots, c_{j-1})})$ and $\phi$ is a proper coloring of $\Gamma$, it follows that Painter may color $v_i$ by $c_i$. In other words, $(c_1, \dots, c_i)$ is a valid sequence, as needed.    
\end{proof}

We proceed to the proof of Theorem~\ref{thm:jacob} which gives the upper bound $g_r(n) = O_r\left(\frac{n \log^{(r-1)} n}{\log^{(r-2)} n}\right).$ The statement easily follows from the following proposition by a recursive procedure.  

\begin{proposition}\label{prop:main2}
Let $n\in \mathbb{N}$ be sufficiently large and let $t:=\lfloor 4n \log \log n / \log n\rfloor$. There exists an on-line algorithm that colors an input on-line graph $G$ on $n$ vertices with at most $2t$ colors so that each color class is independent or contained in the neighborhood of some vertex.
\end{proposition}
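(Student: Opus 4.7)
Proof plan. I would design an online algorithm using a palette of $2t$ colors organized into two pools of $t$ colors each: an ``independent'' pool $I_1,\dots,I_t$ (whose classes will be maintained as independent sets) and a ``neighborhood'' pool $N_1,\dots,N_t$. Each $N_j$ will be lazily paired, upon first use, with a specific vertex $u_j$; thereafter its class is constrained to lie inside $N(u_j)$, making the class a subset of a neighborhood as required. I set the threshold $k := \lfloor \log n/(4\log\log n)\rfloor$, so that $tk \approx n$; this parameter governs when the algorithm escalates from the independent pool to the neighborhood pool.

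When a new vertex $v$ arrives, I would process it in three stages. First, try First-Fit on the indep pool: assign $v$ to the smallest-indexed $I_j$ whose current class contains no neighbor of $v$. If First-Fit fails, then $v$ must have a neighbor in every indep class, and so has at least $t$ previously colored neighbors. Next try to extend an existing neighborhood class: look for $N_j$ with $v \in N(u_j)$ and no neighbor of $v$ colored $N_j$. If none exists, open a fresh neighborhood color $N_{j^*}$ and pair it with some previously colored neighbor $u_{j^*}$ of $v$, chosen by a fixed rule (e.g., the earliest predecessor not yet paired). The required structural property then follows by construction: indep classes are only extended when there is no conflict, and each neighborhood class is grown only by neighbors of its paired vertex, hence lies in $N(u_j)$.

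The main work is to show the algorithm never needs more than $t$ colors in either pool. For the indep pool, I would cap each class's size at $k$; a standard First-Fit analysis then yields that at most $n/k = t$ indep classes are ever opened, since whenever we would require a $(t+1)$-th indep class, $v$ is instead routed to the neighborhood pool. For the neighborhood pool, each opening of a fresh $N_{j^*}$ is charged to the triggering vertex $v$, whose predecessor-degree is at least $t$; an amortization showing that each neighborhood class grows to size $\Omega(k)$ on average yields the bound $n/k = t$ on the number of openings. The factor of $4$ in the definition of $t$ provides the slack needed to balance these two analyses and absorb lower-order loss terms.

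The main obstacle will be verifying that step~3 always succeeds once reached---i.e., that whenever both the indep pool and all active neighborhood colors fail for $v$, (i) some $N_{j^*}$ is still unused and (ii) $v$ has a previously colored neighbor that has not yet been paired with any existing $N_j$. Point (i) reduces to the amortization bounding neighborhood openings by $t$, while (ii) is a pigeonhole argument leveraging that $v$ has $\ge t$ predecessor-neighbors but at most $t$ vertices are currently paired with neighborhood colors. Getting the bookkeeping right here, especially the interplay between the cap $k$ on indep classes and the ``growth'' of neighborhood classes, is what drives the precise value of $t = \lfloor 4n\log\log n/\log n\rfloor$ in the statement.
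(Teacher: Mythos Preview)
Your plan anchors each neighborhood color $N_j$ to a single vertex $u_j$, fixed when the color is opened; this is too rigid, and the amortization you hope for (that neighborhood classes grow to average size $\Omega(k)$) has no mechanism behind it. A concrete adversary: first force all $t$ independent classes open with two vertices each (present a clique $w_1,\dots,w_t$, then $w'_1,\dots,w'_t$ with $w'_i\sim w_1,\dots,w_{i-1}$, so $I_i=\{w_i,w'_i\}$). Now for $j=1,\dots,t+1$ present $v_j$ adjacent to $w'_1,\dots,w'_{j-1},w_j,\dots,w_t$ and nothing else. Each $v_j$ has a neighbor in every $I_s$, so fails First-Fit; under your rule ``earliest unpaired predecessor'' one checks inductively that $u_i=w_i$, so $v_j\notin N(u_i)$ for all $i<j$ and Stage~2 fails; hence $v_j$ opens a fresh $N_j$. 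After $3t+1\ll n$ vertices the neighborhood pool is exhausted with every class of size exactly~$1$. The point is general: there are at most $t$ anchor vertices in total, and Builder can make each routed vertex avoid all of them while still meeting every independent class.

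The paper's algorithm attaches to each color $j\in[t]$ not a single anchor but a \emph{set} $N_{j,h}$ of $L$-colored common neighbors of the current class $C_{j,h}$, maintained pairwise disjoint with the invariant $|N_{j,h}|\ge (t/(2n))^{|C_{j,h}|-1}\cdot t/2$. A vertex $v$ with $\ge t$ $L$-neighbors receives color $j$ only when it sees at least a $t/(2n)$-fraction of $N_{j,h-1}$, whereupon $N_{j,h}$ shrinks to those neighbors; if no $j$ qualifies, disjointness forces $\ge t/2$ of $v$'s $L$-neighbors to lie outside $\bigcup_j N_{j,h-1}$, and these seed a fresh set. The geometric invariant then yields a double count: each class of size $<2n/t$ carries a set $N_j$ of size $\ge 2n/t$, so by disjointness fewer than $t/2$ such classes coexist, while fewer than $t/2$ classes have size $\ge 2n/t$; hence a fresh $j\in[t]$ is always available. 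Your single-anchor scheme has no analogue of this argument. (There is also a smaller inconsistency: once you cap independent classes at $k$, First-Fit can fail by overflow rather than by conflict, so a routed vertex need not have $\ge t$ predecessor-neighbors; the paper imposes no such cap.)
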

Before proving Proposition~\ref{prop:main2}, let us directly demonstrate how it can be used to prove Theorem~\ref{thm:jacob}. 
To do so, we shall also need the following simple observation.
\begin{observation} \label{obs:small-color-classes}
    If there is an on-line coloring algorithm $A$ for $n$-vertex graphs from a graph class $\calG$ using $x$ colors, then there is an on-line coloring algorithm $A'$ for the same set of graphs using $2x$ colors with the property that every color class has size at most $s \coloneqq n/x$.    
\end{observation}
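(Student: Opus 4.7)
The plan is to run $A$ internally and split each of $A$'s $x$ color classes into sub-classes (``buckets'') of size at most $s$, each of which receives its own fresh color drawn from a pool of $2x$ colors. Because any subset of an independent set is independent, this automatically produces a proper coloring output by $A'$, and by construction no color class of $A'$ can exceed size $s$. The only thing that really needs checking is that the pool of $2x$ colors is not exhausted.

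Concretely, I would maintain for each color $i\in[x]$ an \emph{active bucket} together with its current size. When $A$ assigns color $i$ to an incoming vertex $v$, the algorithm $A'$ proceeds as follows: if the active bucket for $i$ currently contains fewer than $s$ vertices, then $A'$ assigns $v$ the color of that bucket; otherwise, $A'$ draws a new color from the pool, declares a fresh active bucket for $i$ with that color, and assigns $v$ to it. This rule is clearly on-line, since it depends only on the present active-bucket sizes and on the color chosen by $A$ for the current vertex.

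The main (indeed, the only) nontrivial step is to verify that the pool never runs out. At any moment, the total number of buckets that have been opened equals $\sum_{i=1}^{x}\lceil |C_i|/s\rceil$, where $C_i$ denotes the current color class $i$ of $A$'s running coloring. Using the elementary inequality $\lceil m/s\rceil \le m/s + 1 - 1/s$, valid for integer $m\ge 0$ and integer $s\ge 1$, together with $\sum_i|C_i|\le n$, this sum is bounded by $n/s + x(1-1/s)$, which is at most $2x$ as soon as $s\ge n/x-1$. Taking $s=\lfloor n/x\rfloor$ thus suffices and yields the claimed bound $s\le n/x$. No further difficulty should arise, as the entire argument reduces to this short bookkeeping step.
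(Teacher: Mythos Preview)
Your proposal is correct and follows essentially the same approach as the paper: run $A$ and split each of its color classes on-line into consecutive blocks of bounded size, then bound the total number of blocks by $\sum_i\lceil m_i/s\rceil\le 2x$. The paper simply uses the cruder estimate $\lceil m_i/s\rceil\le m_i/s+1$ with $s=n/x$, whereas you take $s=\lfloor n/x\rfloor$ and use the slightly sharper $\lceil m/s\rceil\le m/s+1-1/s$ to absorb the integrality loss; the two arguments are otherwise identical.
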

\begin{proof}
    Indeed, to produce the algorithm $A',$ we simply run $A,$ but on-line split the color classes into color classes of size $s$. That is, for a fixed color $c$, the first $s$ vertices colored $c$ by $A$ will be colored $(c,1)$ by $A'$, the next $s$ will be colored $(c, 2)$ by $A'$, and so on. If $A$ produces a coloring with color class sizes $m_1, \dots, m_x$, then $A'$ uses $\sum_{i=1}^x \ceil{m_i/s} \le \sum_{i=1}^x (m_i/s + 1) = 2x$ colors.
\end{proof}

\begin{proof}[Proof of Theorem~\ref{thm:jacob} given Proposition~\ref{prop:main2}]
    We proceed by induction on $r$. For each $r \ge 2,$ we shall define an on-line coloring algorithm $A_r = A_r(n)$ which given $n$, on-line colors $n$-vertex $K_r$-free on-line graphs using $\frac{C_r n \log^{(r-1)}n}{\log^{(r-2)} n}$ colors for some constant $C_r$ depending only on $r$.  In the base case $r=2$, we can simply color every vertex the same color, using one color. Now let $r \ge 3.$ By Proposition~\ref{prop:main2} and Observation~\ref{obs:small-color-classes}, there is an on-line coloring algorithm $A$ using at most $16 n \log \log n / \log n$ colors such that every color class is an independent set or contained in a neighborhood of a vertex and has size at most $s := \log n  / (8 \log \log n)$.

    Let $A_{r-1} = A_{r-1}(s)$ be the on-line coloring algorithm for $s$-vertex $K_{r-1}$-free graphs using at most $$\frac{C_{r-1} s \log^{(r-2)}s}{\log^{(r-3)} s}$$ colors given by the induction hypothesis. Now, $A_r = A_r(n)$ is the algorithm obtained by running $A_{r-1}$ on each color class produced by $A$, each time using a new, disjoint set of colors. Note that this is feasible, since every color class produced by $A$ must span a $K_{r-1}$-free subgraph. Observe that $s$ is only a function of $n$ so the algorithm $A_{r-1}$ is well-defined at the start and thus so is $A_r$. We remark that a color class given by $A$ might have size smaller than $s$, but crucially, not larger than $s$, so each run of the algorithm $A_{r-1}$ will produce a valid coloring. For $n$ large enough, the total number of colors used by $A_r$ is at most
    \[ \frac{16 n\log \log n}{\log n} \cdot \frac{C_{r-1} s \log^{(r-2)} s}{\log^{(r-3)} s} \le \frac{16 n\log \log n}{\log n} \cdot C_{r-1} \frac{\log n}{8\log\log n} \cdot \frac{2\log^{(r-1)} n}{\log^{(r-2)} n} \le \frac{C_r n \log^{(r-1)} n}{\log^{(r-2)} n},
    \]
    for an appropriate choice of $C_r$.    
\end{proof}

%Note that Theorem \ref{thm:jacob} follows by applying induction on $k$.  In the base case $k=2$,  we can simply color every vertex the same color, using one color.  For the induction step,  we use the same coloring algorithm for $K_{k-1}$-free graphs applied to refining each color class in the coloring given by Proposition \ref{prop:main2}.  The bound follows by concavity as the number of colors used is maximized if each of the $2t$ colors has the same size,  namely $n/(2t)=\log n /8\log \log n$. 
We now supplement the proof of Proposition~\ref{prop:main2}.
\begin{proof}[Proof of Proposition \ref{prop:main2}]

Recall that we assume $n$ to be sufficiently large and $t=\lfloor 4n \log \log n / \log n\rfloor$. Consider the following setup.  We give ourselves two disjoint sets, each with $t$ colors, to use for the on-line coloring process. One
of these sets we call $L$, and the other set is $[t]=\{1,\ldots,t\}$. The vertices of each color in $L$ will form an
independent set, while the vertices of color $c \in [t]$ will share a
common neighbor.  Our goal is thus to present such an on-line coloring algorithm.

At step $h$, vertex $v_h$ is added. For $j \in [t]$, we let $C_{j,h}$ denote
the subset of the first $h$ vertices which have been assigned color $j$, and we define
$C_{j,0}$ as the empty set for each $j$. 

At step $h$ we will define sets $N_{j,h}$ for each color $j \in [t]$ which
satisfy the following properties: 

\begin{enumerate}
\item Every vertex in $N_{j,h}$ is assigned a color in $L$.
\item Every vertex in $N_{j,h}$ is adjacent to every vertex in $C_{j,h}$. 
\item $|N_{j,h}| \geq \left(\frac{t}{2n}\right)^{|C_{j,h}|-1}\frac{t}{2}$
if $C_{j,h}$ is nonempty.
\item The sets $N_{j,h}$ with $j \in [t]$ are pairwise disjoint. 
\end{enumerate}

We start by setting $N_{j,0}$ to be the empty set for each $j \in [t]$ and note that the desired properties are satisfied for $h=0$.  

We next describe the coloring strategy. At step $h$, the $h$-th vertex $v_h$ is added.
If it has no neighbor of color $\ell$ for some $\ell \in L$, then we
assign to $v_h$ color $\ell$ and continue to the next step (setting
$N_{j,h}=N_{j,h-1}$ for each $j \in [t]$). Otherwise, $v_h$ has at
least $t$ neighbors with color in~$L$, one of each color in~$L$. If there
exists a $j \in [t]$ with $C_{j,h-1}$ nonempty and $v_h$ has at least
$(\frac{t}{2n})|N_{j,h-1}|$ neighbors in $N_{j,h-1}$, then we select the smallest such $j$, assign
to $v_h$ color $j$ (i.e, $C_{j,h}=C_{j,h-1}\cup \{v_h\}$) and set $N_{j,h}$ to be the set of neighbors of
$v_h$ in $N_{j,h-1}$. We set $N_{j',h}=N_{j',h-1}$ for $j' \not = j$. If no
such $j$ exists, then the number of neighbors of vertex $v_h$ in at least one
of the $N_{j,h-1}$ is at most $\sum_{j} \frac{t}{2n}|N_{j,h-1}| \leq t/2$.
Hence, $v_h$ has at least $t-t/2=t/2$ neighbors of color $L$ not in any of the sets
$N_{j,h-1}$. It suffices then to show that there is an unused color in
$[t]$, i.e., that one of the $C_{j,h-1}$ is empty. Indeed, we could then give
vertex $v_h$ the color $j$, resulting in $C_{j,h}=\{v_j\}$, let $N_{j,h}$ be the set of neighbors of $v_h$ of color $L$
that are not contained in any of the sets $N_{j,h-1}$, and set $N_{j',h}=N_{j',h-1}$ for $j' \not = j$, preserving all the invariants. Note that, indeed, $|N_{j,h}| \geq t/2=(t/(2n))^{|C_{j,h}|-1} t/2$.

The
number of colors $j$ for which $|C_{j,h-1}| \geq 2n/t$ is less than
$\frac{t}{2}$ since  $h-1 < n$ vertices haven been presented thus far. Each color $j$
with $1 \leq |C_{j,h-1}| < 2n/t$ satisfies $|N_{j,h-1}| \geq
\left(\frac{t}{2n}\right)^{2n/t}t/2 \geq 2n/t$, and as the sets $N_{j,h-1}$
are disjoint and their union has size at most $h-1<n$, there are less than
$\frac{t}{2}$ such colors $j$. Hence, there is at least one color $j\in [t]$
not used so far (i.e. $C_{j,h-1}=\emptyset$), and we can assign vertex $h$ that color and continue. 

The coloring procedure above maintains all the invariants. In the end, we
get a coloring of the vertices with the desired properties. 
\end{proof}
Having finished the proof of our new upper bound on $g_r(n)$, we next move on to proving the lower bound for $g_r(n)$ announced in the introduction, which we restate here for convenience.
\begin{proposition}
    \label{prop:Kr-free-game}
    For each $r \geq 3$ and $d \geq 1$, the following holds. In the on-line coloring game where the graph is restricted to be $K_r$-free, Builder can force $d$ colors in $O_r(d^{1 + \frac{1}{r - 2}})$ rounds. In other words, $g_r(n) = \Omega_r(n^{\frac{r-2}{r-1}})$.
\end{proposition}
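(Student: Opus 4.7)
The plan is to prove by induction on $r \ge 3$ that Builder has a winning strategy in the $K_r$-free on-line coloring game that forces $d$ distinct colors within $N_r(d) \le C_r d^{(r-1)/(r-2)}$ rounds; this is equivalent to the stated bound $g_r(n) = \Omega_r(n^{(r-2)/(r-1)})$.

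For the base case $r = 3$, the plan is to exhibit a Builder strategy forcing $d$ colors in $O(d^2)$ rounds in the triangle-free game. Builder proceeds in $d$ phases, maintaining the invariant that after phase $i$ she has an independent set $I_i$ of $i$ vertices with $i$ pairwise distinct colors. To advance from $I_{i-1}$ to $I_i$, Builder first presents a vertex $v$ joined to every vertex of $I_{i-1}$; since $I_{i-1}$ is independent this preserves triangle-freeness and forces $v$ to receive a new color $c_i$. She then presents $i-1$ further vertices $v_1, \ldots, v_{i-1}$, where $v_j$ is joined only to $I_{i-1} \setminus \{w_j\}$ (with $w_j$ denoting the current representative of color $c_j$), forcing the color of $v_j$ to lie in $\{c_j\} \cup \{\text{brand new colors}\}$. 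In either case, $\{v, v_1, \ldots, v_{i-1}\}$ is independent (no cross-edges were added among them) and exhibits $i$ distinct colors, providing the new $I_i$. Phase $i$ uses $i$ vertices, so the total cost is $O(d^2)$.

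For the inductive step, suppose the $K_{r-1}$-strategy forces $d'$ colors in $N_{r-1}(d') = O_r((d')^{(r-2)/(r-3)})$ rounds. Set $m \coloneqq \lceil d^{1/(r-2)} \rceil$ and $d' \coloneqq \lceil d/m \rceil \approx d^{(r-3)/(r-2)}$. The plan is to run $m$ disjoint batches $B_1, \ldots, B_m$, where inside each $B_i$ Builder executes the inductive $K_{r-1}$-strategy on $N_{r-1}(d')$ fresh vertices, forcing $d'$ colors within $B_i$; between batches, Builder adds cross-edges arranged so that (i) the overall graph remains $K_r$-free and (ii) the colors used inside $B_i$ are disjoint from those used in $B_1 \cup \cdots \cup B_{i-1}$. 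Granting (i) and (ii), the final graph has at least $m d' \ge d$ distinct colors, with total cost $m \cdot N_{r-1}(d') = O(d^{(r-1)/(r-2)})$.

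The main obstacle will be engineering the cross-edges so that (i) and (ii) hold simultaneously. The structural point enabling (i) is that any potential $K_r$ spanning two batches must contain a $K_{r-1}$ entirely inside one batch, which is excluded by the inductive $K_{r-1}$-freeness of each batch; this forces us to place all cross-edges between a pair of batches onto mutually independent vertex subsets of one side. Conversely, for (ii), each new vertex of $B_i$ should be joined to representatives from $B_1, \ldots, B_{i-1}$ whose colors exhaust the palette used so far, thereby forcing Painter to introduce a color new to $B_i$. Verifying that suitable representatives can always be identified adaptively -- and that the cross-edges to each earlier batch can be supported on an independent subset of that batch -- is the technical heart of the induction, and mirrors the ``witness'' bookkeeping already seen in the base case.
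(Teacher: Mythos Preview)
Your base case for $r=3$ contains a concrete error. After presenting $v$ (adjacent to all of $I_{i-1}$, receiving the new color $c_i$) and then the vertices $v_j$ (each adjacent only to $I_{i-1}\setminus\{w_j\}$), nothing prevents Painter from assigning the single color $c_i$ to \emph{every} $v_j$: none of the $v_j$ is adjacent to $v$ or to any other vertex of that color. Thus $\{v,v_1,\ldots,v_{i-1}\}$ can be monochromatic and does not ``exhibit $i$ distinct colors''; your invariant collapses at the very first phase. Nor can you repair this by adding the edge $vv_j$, since $v$, $v_j$ and any $w_k$ with $k\neq j$ would then form a triangle. Maintaining a rainbow independent set in the triangle-free game is exactly where the difficulty lies, and your phase description does not resolve it.

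The inductive step has a structural problem of the same flavor. For~(ii), each vertex of $B_i$ must be joined to representatives of all colors used in $B_1,\ldots,B_{i-1}$; for~(i), that set of representatives must be independent. But your own cross-edges make every vertex of $B_{j'}$ (hence in particular its later representatives) adjacent to the representatives of $B_j$ whenever $j<j'$, so the accumulated set of representatives contains complete bipartite pieces. Concretely, for $r=4$: an edge $uu'$ inside $B_3$ together with one representative from $B_1$ and one from $B_2$ already yields a $K_4$. So the step you defer as ``the technical heart of the induction'' is not mere bookkeeping; as outlined, requirements~(i) and~(ii) are in tension, and a stronger inductive hypothesis than ``forces $d'$ colors'' is unavoidable.

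The paper sidesteps both obstacles by strengthening the induction differently. For each $r\ge 2$ and parameter $y$, Builder produces in at most $y^{r-1}$ rounds a single independent set $I$ of size at least $c_r y^{r-1}$ in which every color occurs at most $y$ times (hence at least $c_r y^{r-2}$ colors appear). The base case $r=2$ is trivial. In the inductive step Builder maintains such a set $J$ and, in each iteration, runs the $K_{r-1}$-strategy with every new vertex joined to the small set $U\subset J$ consisting of one representative per \emph{frequent} color (those occurring at least $y/2$ times in $J$). Since $U\subset J$ is independent, no $K_r$ arises; since the new vertices automatically avoid the frequent colors, the updated set $(J\setminus U)\cup V$ retains the multiplicity bound while growing by $\Omega(y^{r-2})$. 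This ``bounded multiplicity'' invariant is what lets a single independent set absorb the output of each recursive call without the cross-batch clique blow-up that breaks your scheme.
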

We prove the following stronger lemma.
\begin{lemma}
For each $r \geq 2$, there exist constants $c_r > 0$ such that the following holds. For each $y \geq 1$, in the on-line coloring game where the graph is $K_r$-free, in at most $y^{r-1}$ rounds, Builder can construct a set of vertices $I$ such that
\begin{enumerate}
    \item $I$ is independent.
    \item Each color appears at most $y$ times in $I$.
    \item $I$ has size at least $c_r y^{r - 1}$.
\end{enumerate}
\end{lemma}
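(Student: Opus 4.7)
The plan is induction on $r$. The base case $r = 2$ is immediate: a $K_2$-free graph is edgeless, so Builder plays $y$ isolated vertices and takes $I$ to be all of them; all three conditions hold trivially with $c_2 = 1$.

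For the inductive step, let $S_{r-1}$ denote the strategy given by the inductive hypothesis with constant $c_{r-1}$. Builder's strategy for $r$ will run in $y$ stages of length $y^{r-2}$ each. In stage $j \in [y]$, Builder invokes $S_{r-1}(y)$ on $y^{r-2}$ fresh vertices, yielding a $K_{r-1}$-free subgraph containing an independent set $I_j$ of size $c_{r-1} y^{r-2}$ with each color appearing at most $y$ times in $I_j$. To force the color palettes of distinct stages to be disjoint, Builder makes every new stage-$j$ vertex adjacent to a blocker set $R_{j-1}$ containing one representative per color used so far in $I_1 \cup \cdots \cup I_{j-1}$. Since $R_{j-1}$ is an independent subset of $\bigcup_{i<j} I_i$ and the stage-$j$ subgraph is $K_{r-1}$-free, any potential $K_{r-1}$ in a new vertex's neighborhood would need at most one vertex from $R_{j-1}$ and at least $r-2$ mutually adjacent vertices in the $K_{r-1}$-free stage-$j$ substructure, which is impossible; hence the graph remains $K_r$-free. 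Builder then outputs $I := (\bigcup_j I_j) \setminus R$ with $R := \bigcup_j R_{j-1}$: removing $R$ eliminates the cross-stage edges, so $I$ is independent, and the disjointness of stage palettes guarantees that each color appears at most $y$ times in $I$.

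The hard part will be bounding $|R|$ from above so that $|I| = c_{r-1} y^{r-1} - |R| \geq c_r y^{r-1}$. Since $|R|$ equals the number of distinct colors Painter uses across $\bigcup_j I_j$, an adversarial Painter could in principle make $|R|$ as large as $|\bigcup_j I_j|$. To resolve this, I would strengthen the inductive hypothesis to additionally guarantee that each $I_j$ uses $O(y^{r-3})$ distinct colors, which can be enforced via a palette-limiting modification of the $S_{r-1}$ subroutine (e.g.\ by subsampling $I_j$ so each color is used exactly $y$ times in $I_j$, discarding any excess). Under this strengthening, $|R| = O(y^{r-2})$, yielding $|I| \geq c_r y^{r-1}$ with, for instance, $c_r := c_{r-1}/2$ for sufficiently large $y$, which completes the induction.
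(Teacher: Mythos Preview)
Your proposal has a genuine gap at exactly the point you flag. The proposed strengthening of the inductive hypothesis---that each $I_j$ uses only $O(y^{r-3})$ distinct colors---is not something Builder can enforce. Painter, not Builder, chooses colors, and nothing prevents Painter from assigning a \emph{distinct} color to every vertex of $I_j$; your ``subsampling so each color is used exactly $y$ times'' then discards everything. More generally, the lemma's content is a \emph{lower} bound on the number of colors Painter is forced to use, so asking for an \emph{upper} bound on the color count of $I_j$ is pulling in the wrong direction and cannot be bootstrapped from the same statement at level $r-1$. There is also a secondary issue: your claim that $R_{j-1}$ is independent is unjustified. Once stage-$2$ vertices are joined to $R_1 \subseteq I_1$, any vertex of $I_2$ is adjacent to every vertex of $R_1$, so a set containing representatives from both $I_1 \cap R_1$ and $I_2$ already has edges; for $r=3$ this produces triangles.

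The paper's argument sidesteps both problems by \emph{not} blocking every color. Builder maintains a single growing independent set $J$ (so the blocker set $U \subseteq J$ is automatically independent), and in each iteration only blocks the \emph{popular} colors, those appearing at least $y/2$ times in $J$. If there are many popular colors, the vertices of those colors already form the desired $I$. Otherwise $|U|$ is small, so removing $U$ costs little, and Builder runs the $(r-1)$-strategy with parameter $y/2$ (not $y$): the halved parameter guarantees that new colors and old unpopular colors together still appear at most $y$ times in the updated $J$. The net gain per iteration is $\Omega(y^{r-2})$, so after $y/2$ iterations $|J|$ reaches $c_r y^{r-1}$. The two ideas you are missing are (i) block only popular colors and use a win--win on their number, and (ii) recurse with parameter $y/2$ to preserve the ``at most $y$'' invariant.
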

Before moving on to the proof, let us note that by setting $y = (d / c_r)^{\frac{1}{r - 2}}$ the lemma immediately implies Proposition~\ref{prop:Kr-free-game}: By (2) and (3) there are at least $c_r y^{r - 2} = d$ different colors, and Builder takes at most $y^{r - 1} = O_r(d^{\frac{r - 1}{r - 2}})$ rounds in total. We now prove the lemma.
\begin{proof}
    We argue by induction on $r$. The base case $r = 2$ is easy: Builder adds $y$ independent vertices and lets $I$ be the entire vertex set. The requirements are satisfied with $c_2 = 1$.

    Suppose the result holds for $(r - 1)$. We describe Builder's strategy for $K_r$-free graphs, with $c_r := c_{r - 1} 2^{-r}$. Builder maintains a set $J$ with the following properties.
    \begin{enumerate}
        \item $J$ is independent.
        \item Each color appears at most $y$ times in $J$.
    \end{enumerate}
    Initially, $J = \emptyset$. Each iteration, let $\Gamma$ be the set of colors that appear at least $y/2$ times in $J$.
    \begin{enumerate}
        \item If $|\Gamma| \geq c_{r - 1} 2^{-r+1} y^{r - 2}$, then Builder obtains $I$ by taking the set of vertices in $J$ with color in $\Gamma$. The set $I$ satisfies (1) and (2) by the properties of $J$, and we have
        $$|I| \geq |\Gamma| \cdot \frac{y}{2} \geq c_{r - 1} 2^{-r+1} y^{r - 2} \cdot \frac{y}{2} = c_r y^{r - 1}$$
        as desired.
        \item Otherwise, let $U \subset J$ be a subset of $J$ containing one vertex of each color in $\Gamma$. Builder runs the strategy for $K_{r - 1}$-free graphs with parameter $y' = y / 2$, adding additional edges between each new vertex and all vertices in $U$. As $U$ is independent, Builder never creates a $K_r$. By the induction hypothesis, Builder can construct an independent set $V$ with size at least $c_{r - 1} (y/2)^{r - 2}$ in at most $(y / 2)^{r - 2}$ rounds, such that each color appears at most $y / 2$ times in $V$. 
        
        At this point, Builder replaces $J$ with $J' = (J \setminus U) \cup V$. We now check that $J'$ satisfies both required properties. As every vertex in $V$ is not adjacent to any of the vertices in $(J \setminus U)$, $J'$ is an independent set. Furthermore, consider a color $c$. If $c \in \Gamma$, then $c$ doesn't appear in $V$; otherwise, $c$ appears at most $y/2$ times in $J$ and at most $y/2$ times in $V$. In both cases, $c$ appears at most $y$ times in $J'$. 

        Finally, we check that 
        $$|J'| = |J| - |U| + |V| \geq |J| - c_{r - 1} 2^{-r+1} y^{r - 2} + c_{r - 1} (y/2)^{r - 2} \geq |J| + c_{r - 1} 2^{-r+1} y^{r - 2}.$$
        So we have $|J'| \geq |J| + 2c_r y^{r - 2}$.
    \end{enumerate}
    In conclusion, in each iteration Builder either obtains the desired $I$ or increases the size of $J$ by at least $2c_r y^{r - 2}$ in at most $(y/2)^{r - 2}$ rounds. After $(y / 2)$ iterations, either Builder has obtained the desired $I$, or the size of $J$ is at least $c_r y^{r - 1}$, in which case $J$ itself is the desired $I$. We conclude that Builder can always construct the desired $I$ in at most $(y/2)^{r - 1}$ rounds.
\end{proof}

%As we discussed, we can construct a triangle-free, $d$-degenerate graph with chromatic number $(d + 1)$ by studying the following game: each turn, the builder adds a new vertex together with at most $d$ edges from the new vertex to an independent set in the existing graph. The painter then assigns a color to the new vertex such that the coloring is proper. The builder wins when the painter uses $(d + 1)$ distinct colors.

%We proved that
%\begin{theorem}
    %If the builder can guarantee a win in $N$ rounds, then there exists a triangle-free $d$-degenerate graph with chromatic number $(d + 1)$ on at most $1 + d + \cdots + d^{N}$ vertices.
%\end{theorem}
%And we exhibit the strategy for the builder to win in at most $1 + \cdots + d = \frac{d(d + 1)}{2}$ rounds.

%Probably of note is that the ``at most $d$ edges" restriction for the builder is redundant, since the builder may always connect to vertices of distinct colors.

\section{Harris' conjecture for higher clique number}
Given the main theorem of \cite{martinsson} and its many applications, it is natural to seek a generalization of Harris' conjecture for $K_r$-free graphs.
\begin{problem}
    \label{problem:harris-kr}
    What is the maximum fractional chromatic number of a $d$-degenerate, $K_r$-free graph?
\end{problem}
The result of Johansson mentioned in the introduction shows that the chromatic number of $K_r$-free graphs with maximum degree $\Delta$ is at most $O_r\left(\frac{\Delta \log\log \Delta}{\log \Delta}\right)$. One might guess  that the answer to \Cref{problem:harris-kr} is also $O_r\left(\frac{d \log\log d}{\log d}\right)$. In this section, we show that surprisingly this is not the case, by establishing Theorem~\ref{thm:lower} as announced in the introduction. We repeat its statement here for convenience.
\thmlowerrestate*
In the following we describe and analyze the construction used to prove Theorem~\ref{thm:lower}, which is inspired by the well-known Zykov construction~\cite{zykov} of triangle-free graphs with large chromatic number. As input, the construction takes two numbers $x,d\in\mathbb{N}$ and a graph $\Gamma$ which is $x$-degenerate.
 We then iteratively construct a sequence of graphs $G_{\Gamma, n}$. Set $G_{\Gamma, 1}$ as the one-vertex graph $K_1$. We construct $G_{\Gamma, n + 1}$ from $G_{\Gamma, n}$ as follows.
\begin{enumerate}
    \item Take $d$ disjoint copies $G_{\Gamma, n}^{(1)}, \cdots, G_{\Gamma, n}^{(d)}$ of $G_{\Gamma, n}$.
    \item For each $d$-tuple of vertices $C \in V(G_{\Gamma, n}^{(1)}) \times \cdots \times V(G_{\Gamma, n}^{(d)})$, we add a disjoint copy $\Gamma_C$ of $\Gamma$, and add edges between every vertex of $C$ and every vertex of $\Gamma_C$.
\end{enumerate}
\textbf{Remark: }For $\Gamma$ the one-vertex graph, this is very similar to Zykov's construction.

We can easily verify the following properties of $G_{\Gamma, n}$ by induction on $n$.
\begin{enumerate}
    \item $G_{\Gamma, n}$ is $(d + x)$-degenerate.
    \item We have $\omega(G_{\Gamma, n}) \leq \omega(\Gamma) + 1$. Indeed, a maximal clique with at least three vertices in $G_{\Gamma, n + 1}$ either lies entirely inside one copy of $G_{\Gamma, n}$, or consists of one vertex from a copy of $G_{\Gamma, n}$ and a clique in $\Gamma$.
\end{enumerate}
We now bound the fractional chromatic number of the graphs in the sequence $(G_{\Gamma, n})_{n\ge 1}$. The crucial observation is the following recursive relation.
\begin{lemma}
\label{lem:fractional-recursion}
For every $n\in \mathbb{N}$ it holds that
$$\frac{1}{\chi_f(G_{\Gamma, n + 1})} \leq \frac{1}{d + 1} \left(\frac{d}{\chi_f(G_{\Gamma, n})} + \frac{1}{\chi_f(\Gamma)}\left(1 - \frac{1}{\chi_f(G_{\Gamma, n})}\right)^d\right).$$
\end{lemma}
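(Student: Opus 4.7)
The plan is to use the LP duality characterization of the fractional chromatic number, which states that
\[
\frac{1}{\chi_f(H)} \;=\; \min_{y} \max_{I \in \mathcal{I}(H)} \sum_{v \in I} y_v,
\]
with $y$ ranging over probability distributions on $V(H)$ and $\mathcal{I}(H)$ the family of independent sets of $H$. To prove the stated bound on $1/\chi_f(G_{\Gamma,n+1})$ it therefore suffices to construct a single distribution $y$ on $V(G_{\Gamma,n+1})$ with $\max_{F \in \mathcal{I}(G_{\Gamma,n+1})} \sum_{v \in F} y_v$ at most the claimed right-hand side.

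Write $p_n := 1/\chi_f(G_{\Gamma,n})$, $q := 1/\chi_f(\Gamma)$, and let $y^{(n)}$, $y^{(\Gamma)}$ be optimal dual distributions realizing these values, i.e., satisfying $\sum_{v \in I} y^{(n)}_v \le p_n$ for every $I \in \mathcal{I}(G_{\Gamma,n})$ and analogously for $y^{(\Gamma)}$. I will take the product-style distribution on $V(G_{\Gamma,n+1})$ in which each vertex $v$ of a copy $G_{\Gamma,n}^{(i)}$ receives weight $\frac{1}{d+1}y^{(n)}_v$, and each vertex $w$ of $\Gamma_C$ with $C = (u_1,\ldots,u_d)$ receives weight $\frac{1}{d+1}\, y^{(n)}_{u_1}\cdots y^{(n)}_{u_d}\, y^{(\Gamma)}_w$. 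Because $y^{(n)}$ and $y^{(\Gamma)}$ each sum to $1$, the total weight over the copies equals $\frac{d}{d+1}$ and the total over the $\Gamma_C$-blocks equals $\frac{1}{d+1}$, confirming $\sum_v y_v = 1$. Now, for any independent $F \subseteq V(G_{\Gamma,n+1})$, set $S_i := F \cap V(G_{\Gamma,n}^{(i)})$, $T_C := F \cap V(\Gamma_C)$, and $s_i := \sum_{v \in S_i} y^{(n)}_v \in [0,p_n]$. Since $V(\Gamma_C)$ is completely joined to $C$, the block $T_C$ can only be nonempty when $u_i \notin S_i$ for every $i$, and in that case $\sum_{w \in T_C} y^{(\Gamma)}_w \le q$ as $T_C$ is independent in $\Gamma_C$. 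Substituting and using $\sum_{u \notin S_i} y^{(n)}_u = 1 - s_i$ gives
\[
\sum_{v \in F} y_v \;\le\; \frac{1}{d+1}\Bigl(\sum_i s_i + q \prod_i (1 - s_i)\Bigr).
\]

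The final step is to maximize the right-hand side over $(s_1,\ldots,s_d) \in [0,p_n]^d$: the function $\Phi(s_1,\ldots,s_d) := \sum_i s_i + q\prod_i (1-s_i)$ has partial derivative $1 - q\prod_{j\neq i}(1-s_j) \ge 1 - q \ge 0$, so it is coordinatewise non-decreasing and therefore attains its maximum at $s_1 = \cdots = s_d = p_n$ with value $d p_n + q(1-p_n)^d$. Combined with the LP identity from the first paragraph, this yields exactly the claimed inequality. The main obstacle I anticipate is only the bookkeeping inherent in the product construction and making the LP duality invocation airtight; the crucial $(1-p_n)^d$ factor drops out cleanly from the monotonicity of $\Phi$, whose sign condition $q \le 1$ is a free consequence of $\chi_f(\Gamma) \ge 1$.
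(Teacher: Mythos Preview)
Your proposal is correct and follows essentially the same approach as the paper: both use the dual LP characterization of $\chi_f$, take optimal dual weightings $w_n$ on $G_{\Gamma,n}$ and $w_\Gamma$ on $\Gamma$, define the product-type weighting on $G_{\Gamma,n+1}$ (the paper leaves it unnormalized with total $d+1$, you pre-divide by $d+1$), observe that an independent set can meet $\Gamma_C$ only if $C$ avoids the $S_i$'s, and then maximize $\sum_i s_i + q\prod_i(1-s_i)$ over $[0,p_n]^d$ by coordinatewise monotonicity. Your explicit check that $\partial_i\Phi = 1 - q\prod_{j\neq i}(1-s_j) \ge 0$ is exactly the justification the paper's phrase ``increasing linear function in each $x_i$'' encodes.
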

Before giving the proof of this key inequality, it will be convenient to recall equivalent definitions of the fractional chromatic number. First of all, it is not too difficult to see that the definition of $\chi_f(G)$ (in terms of probability distributions on independent sets) that we gave in the introduction is equivalent to defining $\chi_f(G)$ as the optimal value of the following linear program (here $\mathcal{I}(G)$ denotes the collection of all independent sets in $G$):
\begin{align*}
\tag{P}
    \text{min} \sum_{I \in \mathcal{I}(G)}&{x_I} \\
    \text{s.t.} \sum_{I\in \mathcal{I}(G): v \in I}&{x_I}\ge 1~~(\forall v \in V(G)), \\
    & x_I \ge 0~~(\forall I \in \mathcal{I}(G)).
\end{align*}
By linear duality, it follows that $\chi_f(G)$ also equals the optimal value of the following dual program:\begin{align*}
\tag{D}
    \text{max} &\sum_{v\in V(G)}{w_v} \\
    \text{s.t.}~~~~&\sum_{v \in I}{w_v}\le 1~~(\forall I \in \mathcal{I}(G)), \\
    &~~~~~w_v \ge 0~~(\forall v\in V(G)).
\end{align*}
From this formulation (by using an appropriate scaling), it can be seen that for any real number $r\in \mathbb{R}_{\ge 1}$ the following holds: A graph $G$ satisfies $\chi_f(G)\ge r$ if and only if there exists a weight function $w:V(G)\rightarrow \mathbb{R}_{\ge 0}$ with $\sum_{v\in V(G)}{w(v)}=1$ such that $w(I):=\sum_{v\in I}{w(v)}\le \frac{1}{r}$ for every independent set $I$ in $G$. The latter characterization is the one which will turn out most convenient in the proof of Lemma~\ref{lem:fractional-recursion}, which we now supply.\begin{proof}[Proof of Lemma~\ref{lem:fractional-recursion}]
    By the previously discussed characterization of the fractional chromatic number, there exists a weight function $w_n: V(G_{\Gamma, n}) \to \RR_{\geq 0}$ such that $\sum_{v \in V(G_{\Gamma, n})} w_n(v) = 1$ and $w_n(I) \leq \frac{1}{\chi_f(G_{\Gamma, n})}$ for each independent set $I$ of $G_{\Gamma, n}$. There also exists a weight function $w_{\Gamma}: V(\Gamma) \to \RR_{\geq 0}$ such that $\sum_{v \in V(\Gamma)} w_\Gamma(v) = 1$ and $w_\Gamma(J) \leq \frac{1}{\chi_f(\Gamma)}$ for each independent set $J$ of $\Gamma$. We now consider the weight function $w_{n + 1}: V(G_{\Gamma, n + 1}) \to \RR_{\geq 0}$ defined by
    $$w_{n + 1}(v) = \begin{cases}
    w_n(v), \text{ if } v \text{ lies in a copy of $G_{\Gamma, n}$,} \\
    w_\Gamma(v) \prod_{u \in C} w_n(u), \text{ if }v \text{ lies in $\Gamma_C$ for some $C\in V(G_{\Gamma,n}^{(1)})\times\cdots\times V(G_{\Gamma,n}^{(d)})$}.
    \end{cases}$$
    Then we have
    $$\sum_{v \in V(G_{\Gamma, n + 1})} w_{n + 1}(v) = \sum_{i = 1}^d \underbrace{\sum_{v \in V(G_{\Gamma, n}^{(i)})} w_n(v)}_{=1} + \sum_{C \in V(G^{(1)}_{\Gamma, n}) \times \cdots \times V(G^{(d)}_{\Gamma, n})} \underbrace{\sum_{v \in \Gamma_C} w_\Gamma(v)}_{=1} \prod_{u \in C} w_n(u)$$
    $$=d+\left(\sum_{v\in V(G_{\Gamma,n})}w_n(v)\right)^d=d+1.$$
    Furthermore, let $I$ be any independent set in $G_{\Gamma, n + 1}$. Let $I_i = I \cap V(G_{\Gamma, n}^{(i)})$, and let $x_i = \sum_{v \in I_i} w_n(v)$. Then $I$ can only intersect $\Gamma_C$ with $C \in \Bar{I_1} \times \cdots \times \Bar{I_d}$, and for such $C$ we have
    $$\sum_{v \in I \cap V(\Gamma_C)} w_{n + 1}(v) = \sum_{v \in I \cap V(\Gamma_C)} w_\Gamma(v) \prod_{u \in C} w_n(u) \leq \frac{1}{\chi_f(\Gamma)} \prod_{u \in C} w_n(u).$$
    Thus, we obtain
    $$\sum_{v \in I} w_{n + 1}(v) \leq \sum_{i = 1}^{d} x_i + \sum_{C \in \Bar{I_1} \times \cdots \times \Bar{I_d}} \frac{1}{\chi_f(\Gamma)} \prod_{u \in C} w_n(u) $$ $$= \sum_{i=1}^{d}x_i+\frac{1}{\chi_f(\Gamma)}\prod_{i=1}^{d} \left(\sum_{u\in \bar{I_i}}w_n(u)\right)=\sum_{i=1}^{d}x_i+\frac{1}{\chi_f(\Gamma)}\prod_{i=1}^{d}\left(\sum_{u\in V(G_{\Gamma,n}^{(i)})}w_n(u)-\sum_{u\in I_i}w_n(u)\right)$$ $$= \sum_{i = 1}^d x_i +  \frac{1}{\chi_f(\Gamma)} \prod_{i = 1}^d (1 - x_i).$$
    Regarded as a function in $\mathbf{x} \in [0, 1]^d$, the right hand side is an increasing linear function in each $x_i$. As $0 \leq x_i \leq \frac{1}{\chi_f(G_{\Gamma, n})}$ for each $i$, we get
    $$\sum_{v \in I} w_{n + 1}(v) \leq d \frac{1}{\chi_f(G_{\Gamma, n})} + \frac{1}{\chi_f(\Gamma)} \left(1 - \frac{1}{\chi_f(G_{\Gamma, n})}\right)^d$$
    which gives
    $$\frac{\sum_{v \in I} w_{n + 1}(v)}{\sum_{v \in V(G_{\Gamma, n + 1})} w_{n + 1}(v)} \leq \frac{1}{d + 1}\left(d \frac{1}{\chi_f(G_{\Gamma, n})} + \frac{1}{\chi_f(\Gamma)} \left(1 - \frac{1}{\chi_f(G_{\Gamma, n})}\right)^d\right).$$
    Thus we conclude that
    $$\frac{1}{\chi_f(G_{\Gamma, n + 1})} \leq \frac{1}{d + 1} \left(\frac{d}{\chi_f(G_{\Gamma, n})} + \frac{1}{\chi_f(\Gamma)}\left(1 - \frac{1}{\chi_f(G_{\Gamma, n})}\right)^d\right)$$
    as desired.
\end{proof}
Since $G_{\Gamma, n}$ is isomorphic to a subgraph of $G_{\Gamma, n  + 1}$, the sequence $(\chi_f(G_{\Gamma, n}))_{n = 1}^\infty$ is non-decreasing in $n$. Furthermore, it is clearly bounded from above (namely by $d+x+1$, since all the graphs in the sequence $(G_{
\Gamma,n})_{n\ge 1}$ are $(d+x)$-degenerate). Therefore, the limit $\chi \coloneqq \lim_{n \to \infty} \chi_f(G_{\Gamma, n})$ exists. Taking $n \to \infty$ in \Cref{lem:fractional-recursion}, we have
$$\frac{1}{\chi} \leq \frac{1}{d + 1} \left(\frac{d}{\chi} + \frac{1}{\chi_f(\Gamma)}\left(1 - \frac{1}{\chi}\right)^d\right).$$
Simplifying, we have
$$\frac{1}{\chi} \leq \frac{1}{\chi_f(\Gamma)}\left(1 - \frac{1}{\chi}\right)^d.$$
Letting $\frac{d}{\chi} = \alpha$, we get
$$\alpha e^{\alpha} \leq \frac{d}{\chi_f(\Gamma)}.$$
Since $\log (a+3) >1$ for all $a>0$, we conclude that
$$\frac{d}{\chi} < \log\left(\frac{d}{\chi_f(\Gamma)} + 3\right).$$
We have proven the following.
\begin{lemma}
\label{lem:recursion}
Let $d, x \geq 2$ be integers. Suppose $\Gamma$ is an $x$-degenerate, $K_r$-free graph. Then there exists a $(d + x)$-degenerate, $K_{r + 1}$-free graph $G_\Gamma$, such that
$$\frac{d}{\chi_f(G_\Gamma)} \leq \log\left(\frac{d}{\chi_f(\Gamma)} + 3\right).$$
\end{lemma}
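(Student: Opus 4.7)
The plan is to take $G_\Gamma := G_{\Gamma,n}$ for sufficiently large $n$ in the iterative construction $(G_{\Gamma,n})_{n\ge 1}$ defined above, and deduce the inequality from Lemma~\ref{lem:fractional-recursion} via a limiting argument followed by a short algebraic manipulation.

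First I would verify by induction on $n$ that each $G_{\Gamma,n}$ is $(d+x)$-degenerate and $K_{r+1}$-free. For the inductive step, a $(d+x)$-degenerate ordering of $G_{\Gamma,n+1}$ is obtained by listing the copies $G_{\Gamma,n}^{(1)},\dots,G_{\Gamma,n}^{(d)}$ first (using their inductive $(d+x)$-degenerate orderings) and then each gadget $\Gamma_C$ in an $x$-degenerate order of $\Gamma$: each vertex of $\Gamma_C$ then has exactly $d$ left-neighbors in $C$ and at most $x$ left-neighbors inside $\Gamma_C$. For $K_{r+1}$-freeness, since distinct copies $G_{\Gamma,n}^{(i)}$ are pairwise non-adjacent and any gadget vertex is adjacent only to vertices of $\Gamma_C\cup C$, any clique of size $\geq 2$ containing a gadget vertex lies entirely in some $\Gamma_C\cup C$; as $C$ is independent it contributes at most one vertex, and $\Gamma_C$ at most $\omega(\Gamma)\le r-1$ more, so such a clique has size at most $r$, while cliques contained in a single copy are bounded by the inductive hypothesis.

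Next, since $G_{\Gamma,n}$ embeds into $G_{\Gamma,n+1}$ the sequence $(\chi_f(G_{\Gamma,n}))_{n\ge 1}$ is non-decreasing, and it is bounded above by $d+x+1$, so it converges to some limit $\chi$. Passing to $n\to\infty$ in the recursion of Lemma~\ref{lem:fractional-recursion} and simplifying gives
\[
\frac{1}{\chi}\;\le\;\frac{1}{\chi_f(\Gamma)}\left(1-\frac{1}{\chi}\right)^{d}.
\]
Setting $\alpha:=d/\chi$ and using $(1-1/\chi)^d\le e^{-d/\chi}=e^{-\alpha}$, this rearranges to $\alpha e^{\alpha}\le d/\chi_f(\Gamma)$.

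The main (mild) obstacle is the final algebraic step: extracting the clean bound $\alpha<\log(d/\chi_f(\Gamma)+3)$ from $\alpha e^{\alpha}\le a$ where $a:=d/\chi_f(\Gamma)\ge 0$. For this I would note that $\log(a+3)>1$ for every $a\ge 0$, hence
\[
\log(a+3)\cdot e^{\log(a+3)}\;=\;(a+3)\log(a+3)\;>\;a+3\;>\;a,
\]
and since $t\mapsto t e^{t}$ is strictly increasing on $[0,\infty)$ this forces $\alpha<\log(a+3)$, i.e.\ $d/\chi<\log(d/\chi_f(\Gamma)+3)$. Finally, because $\chi_f(G_{\Gamma,n})\nearrow\chi$, this strict inequality on $\chi$ allows us to pick $n$ large enough that $d/\chi_f(G_{\Gamma,n})\le\log(d/\chi_f(\Gamma)+3)$, and setting $G_\Gamma:=G_{\Gamma,n}$ completes the proof.
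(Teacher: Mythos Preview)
Your proposal is correct and follows essentially the same route as the paper: define $G_\Gamma$ as $G_{\Gamma,n}$ for large $n$, use the monotone convergence of $\chi_f(G_{\Gamma,n})$ together with Lemma~\ref{lem:fractional-recursion} to obtain $\alpha e^{\alpha}\le d/\chi_f(\Gamma)$ for $\alpha=d/\chi$, and then extract the logarithmic bound via the observation $\log(a+3)>1$. Your write-up even supplies a couple of details the paper leaves implicit, namely the explicit $(d+x)$-degenerate ordering and the final step of passing from the strict inequality on the limit $\chi$ to a choice of finite $n$.
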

Finally, let $f_r(d)$ denote the infimum of $\frac{d}{\chi_f(G_\Gamma)}$ over all $d$-degenerate, $K_r$-free graphs. Taking $x = d$ in \Cref{lem:recursion} gives
$$f_{r + 1}(2d) \leq 2 \log(f_r(d) + 3).$$
As $f_2(d) \leq d$, we get $f_r(d) \lesssim_r \log^{(r - 2)}(d)$ by induction on $r$, establishing \Cref{thm:lower}. 

%\textbf{Remark: }I currently don't know how to prove anything like $f_r(d) = \omega(1)$ for $r \geq 4$.

%\textbf{Remark: }Can we check that there is a matching upper bound? Update the file later?

%\textbf{Remark: }Considering the following modification of Descarte's construction: $G_1 = K_1$, $G_{n + 1}$ is obtained from $G_n$ as follows: for each independent set $I$ in $G_n$ of size $d$, we attach a new vertex adjacent to $I$. Then $G_1, G_2, \cdots$ are $d$-degenerate, triangle-free, and contain every $d$-degenerate+triangle-free graph as a subgraph eventually.
\section{Proofs of the Applications}\label{sec:applications}
In this section we give the proofs of the applications of Theorem~\ref{thm:main2} presented at the end of the introduction. We start with our two applications to Hadwiger's conjecture. For this, we first derive a result about so-called \emph{clustered colorings} of $K_t$-minor-free graphs from known results in the literature. Given a graph $G$ and integers $k$ and $d$, a \emph{$d$-clustered $k$-coloring} of $G$ is any assignment $c:V(G)\rightarrow [k]$ of one of $k$ colors to the vertices such that each monochromatic component, i.e., each component of an induced subgraph of the form $G[c^{-1}(i)]$ for some $i\in [k]$, has at most $d$ vertices. There is a rich and interconnected theory on clustered colorings of various sparse classes of graphs, we refer to the survey~\cite{wood} by Wood for an extensive summary of known results.  To prove Corollaries~\ref{cor:had1}~and~\ref{cor:had2}, we will use the following clustered coloring result which follows by combining results from a recent paper of Dujmovi\'{c} et al.~with the recent breakthrough by Gorsky, Seweryn and Wiederrecht~\cite{gowi} on polynomial bounds for the Graph Minor Structure Theorem originally due to Robertson and Seymour~\cite{RS}.
\begin{lemma}\label{lem:Ktdecomposition}
There exists a polynomial $p(\cdot)$ such that the following holds. For every $\Delta, t\in \mathbb{N}$, every $K_t$-minor-free graph with maximum degree at most $\Delta$ admits a $p(\Delta t)$-clustered $3$-coloring.
\end{lemma}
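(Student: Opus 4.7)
The plan is to combine two main ingredients. The first is the polynomial Graph Minor Structure Theorem due to Gorsky, Seweryn and Wiederrecht~\cite{gowi}, which refines the classical GMST of Robertson and Seymour~\cite{RS} by giving \emph{polynomial-in-$t$} bounds on all structural parameters. Applied to the $K_t$-minor-free graph $G$, this yields a tree decomposition of $G$ whose adhesion sets have size $\mathrm{poly}(t)$ and whose torsos are each obtained from a graph nearly embedded in a surface of Euler genus at most $\mathrm{poly}(t)$, with at most $\mathrm{poly}(t)$ apex vertices and at most $\mathrm{poly}(t)$ vortices of depth at most $\mathrm{poly}(t)$.

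The second ingredient is a clustered $3$-coloring result for near-embeddings drawn from the toolkit of Dujmović et al.~\cite{dujmovic}: any graph nearly embedded in a surface of Euler genus $g$, with $a$ apex vertices and vortices of depth $w$, and maximum degree at most $\Delta$, admits a $3$-coloring whose monochromatic components have size at most $\mathrm{poly}(g,a,w,\Delta)$. Feeding in the polynomial structural parameters from the first ingredient yields a $3$-coloring of each torso in which every cluster has size $\mathrm{poly}(\Delta t)$. In this step the bounded maximum degree is essential: the apex vertices and vortex structure only affect, in total, a $\mathrm{poly}(\Delta t)$-sized neighborhood and so can be absorbed into existing color classes without blowing up cluster sizes.

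The final step is to stitch the per-torso colorings into a single clustered $3$-coloring of $G$. Proceeding from the root of the decomposition tree downwards, at each torso I would permute its three color classes to agree with its already-colored parent on the shared adhesion set. Monochromatic components of the resulting global coloring can only merge along adhesion sets, and since each adhesion contains at most $\mathrm{poly}(t)$ vertices each of which has at most $\Delta$ neighbors in its current cluster, every merge inflates the affected cluster by at most a $\mathrm{poly}(\Delta t)$ factor. A standard inductive counting argument on the decomposition tree then shows that each final cluster has size $\mathrm{poly}(\Delta t)$, as required.

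The main obstacle will be precisely this stitching step: in general a mere permutation of three colors cannot match the parent's coloring on an adhesion of size greater than one. Two classical remedies are available: either refine the tree decomposition so every adhesion has size at most one (introducing only a $\mathrm{poly}(t)$ overhead in the cluster sizes because the original adhesions are polynomial), or employ a defect-coloring argument that allows a bounded number of adhesion mismatches at each step and absorbs them into already existing monochromatic components. Either route preserves the $\mathrm{poly}(\Delta t)$ cluster bound needed in the statement, and it is this combinatorial control on cluster growth across the tree that constitutes the real technical content of the proof.
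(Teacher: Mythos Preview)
Your two main ingredients are exactly the ones the paper uses: the polynomial Graph Minor Structure Theorem of Gorsky--Seweryn--Wiederrecht, combined with the clustered-colouring machinery of Dujmovi\'c et al. However, the paper does \emph{not} carry out any per-torso colouring or stitching itself. It simply quotes (as its Theorem~\ref{thm:partition}) a statement extracted from the proofs in~\cite{dujmovic}: if the GMST holds with parameter $k_H$, then every $H$-minor-free graph of maximum degree $\Delta$ has an $O(k_H^{18}\Delta^5)$-clustered $3$-colouring. Plugging in $k_H = q(t)$ from~\cite{gowi} finishes the lemma in one line.

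The substantive difference, then, is that you try to re-derive the Dujmovi\'c et al.\ result, and your sketch of that inner argument does not match how they actually proceed. Their method is \emph{not} ``colour each torso independently, then permute colours along the tree decomposition''. Instead they establish a product-structure / layered-partition theorem: the graph is a subgraph of $H \boxtimes P \boxtimes K_\ell$ for some bounded-treewidth $H$, a path $P$, and bounded $\ell$, and the $3$-colouring comes from the layer index modulo~$3$. This completely sidesteps the stitching problem you identify. Your worry about that step is well founded --- a permutation of three colours cannot in general match the parent's colouring on an adhesion of size larger than one, and neither of your proposed fixes (forcing adhesion size~$1$, or absorbing mismatches) comes with an argument that the cluster sizes stay polynomial. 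So the gap is real for the route you chose, but it is a gap in an unnecessary detour: the paper avoids it entirely by citing~\cite{dujmovic} as a black box.
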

To obtain Lemma~\ref{lem:Ktdecomposition}, we start by stating the following formulation of the graph minor structure theorem. As we shall not need them here, we refrain from giving the detailed definitions of the notions appearing in the statement and instead refer the interested reader to~\cite{dujmovic}.

\begin{theorem}[Graph Minor Structure Theorem~\cite{RS}, cf.~Theorem~8 in~\cite{dujmovic}]\label{thm:graphminors}
For every graph $H$, there exists $k_H\in \mathbb{N}_0$ such that every $H$-minor-free graph $G$ admits a tree decomposition $(B_x|x\in V(T))$ such that the torso $G_x$ of $B_x$ is $k_H$-almost
embeddable for each vertex $x\in V(T)$.
\end{theorem}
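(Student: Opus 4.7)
The plan is to follow the original Robertson--Seymour strategy spread across the Graph Minors series. By monotonicity of minor-containment, I would first reduce to the case $H = K_t$: for a general graph $H$ on $h$ vertices, any $H$-minor-free graph is $K_h$-minor-free, so setting $k_H := k_{K_h}$ reduces the problem to showing the theorem only for complete graphs. Fix then $t \in \mathbb{N}$ and let $G$ be $K_t$-minor-free.

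The governing parameter is tree-width. If $\mathrm{tw}(G) \le w(t)$ for a suitable function $w$, the trivial one-bag tree decomposition has $G$ itself as its unique torso, which is $k_H$-almost-embeddable for $k_H := w(t) + 1$ trivially. So assume $\mathrm{tw}(G) > w(t)$. By the Excluded Grid Theorem (a central result of the series, with polynomial bounds later given by Chuzhoy and collaborators), $G$ then contains a large subdivided wall $W$. Applying the Flat Wall Theorem to $W$ yields the key dichotomy: either a bounded set $A$ of apex vertices can be removed so that $G - A$ contains a smaller but \emph{flat} wall $W'$, meaning a subdivided wall contained in a planar subgraph attached to the rest of $G - A$ by a small vertex cut around its boundary, or $G$ contains a $K_t$-minor, which is ruled out. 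In the flat-wall case I would extract a piece of $G$ surrounding $W'$ that is near-planar up to a constant number of vortices accommodating local non-planar attachments; this piece becomes one almost-embeddable torso of the eventual decomposition.

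The main obstacle is organizing these extractions into a coherent tree decomposition, which is the content of the deeper parts of the series (GM XI--XVI). The Society Theorem and the Weak Structure Theorem together describe how a near-embedded piece attaches through its boundary, letting one iteratively separate off flat-wall pieces along bounded-size cuts. Each separation produces a new bag of the growing tree $T$ whose torso is $k_H$-almost-embeddable by construction (flat part plus bounded apices and vortices), and leaves an unprocessed remainder $G'$ to which the same argument applies. A measure-decrease argument, for instance using a well-quasi-ordering on rooted separated pieces or descent on an appropriate combinatorial invariant bounded in terms of $t$, ensures termination. Combining the bags produced at each step along the tree $T$ yields the desired decomposition $(B_x \mid x \in V(T))$ with all torsos $k_H$-almost-embeddable. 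The core technical difficulty throughout is controlling the vortices and painted societies while iterating, since naive recursion loses control of the almost-embeddability constant; resolving this is precisely what forces the constant $k_H$ to depend on $H$ rather than being absolute.
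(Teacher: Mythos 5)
This statement is not proved in the paper at all: it is quoted verbatim as the Robertson--Seymour Graph Minor Structure Theorem and used as a black box (via Theorem~8 of Dujmovi\'c et al., together with the Gorsky--Seweryn--Wiederrecht polynomial bound on $k_H$), so there is no in-paper argument to compare yours against. Judged on its own terms, your proposal is not a proof but a narrative outline of the Graph Minors series. The reduction from general $H$ to $K_h$ is fine, but every substantive step afterwards is deferred: extracting a flat wall via the Excluded Grid and Flat Wall Theorems is only the entry point, and the heart of the theorem --- showing that the piece of $G$ around a flat wall can be organized as a surface-embedded part with boundedly many vortices of bounded width plus bounded apices, and that these pieces can be glued along bounded-adhesion separations into a single tree decomposition whose \emph{torsos} are $k_H$-almost-embeddable with a uniform $k_H=k_H(t)$ --- is exactly the content of GM~IX--XVII and is simply asserted (``the Society Theorem and the Weak Structure Theorem together describe how\dots''). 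Your termination argument is also not a proof step: invoking ``a well-quasi-ordering on rooted separated pieces'' is both vague and dangerously circular, since the wqo results of the series are themselves built on this structure theorem, and the actual argument controls the recursion by tangles/induction on $t$ with explicit bookkeeping of the apex and vortex parameters, which is precisely the part you acknowledge as ``the core technical difficulty'' without resolving it.

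There is also a concrete flaw in your base case. When $\mathrm{tw}(G)\le w(t)$ you take the one-bag decomposition and claim the torso $G$ is $(w(t)+1)$-almost-embeddable ``trivially''; this is unjustified --- bounded treewidth does not by itself place the whole graph in a surface of bounded genus with boundedly many apices and bounded-width vortices, and no argument is offered. The correct (and genuinely trivial) route is to take a tree decomposition of width $w(t)$: its torsos have at most $w(t)+1$ vertices and are almost-embeddable by declaring all their vertices apices. In summary, the proposal correctly names the known proof architecture but supplies none of the mathematics; as a self-contained proof of the stated theorem it has an essential gap, and as a contribution to this paper it is unnecessary, since the paper deliberately cites the result rather than reproving it.
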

The aforementioned breakthrough of Gorsky, Seweryn and Wiederrecht~\cite{gowi} shows that Theorem~\ref{thm:graphminors} holds with $k_H=q(|V(H)|)$, where $q(\cdot)$ is an explicit polynomial.

Dujmovi\'{c} et al.~\cite{dujmovic} proved the following theorem. While Theorem~\ref{thm:partition} below is not explicitly stated in their paper as in the following, one can easily verify that the proofs of Theorems~10,~19 and~20 in~\cite{dujmovic} indeed directly yield the following statement.

\begin{theorem}\label{thm:partition}
Let $H$ be a graph, and let $k_H\in \mathbb{N}$ be such that the statement of Theorem~\ref{thm:graphminors} holds. Then for every $\Delta\in\mathbb{N}$, every $H$-minor-free graph of maximum degree at most $\Delta$ admits an $O(k_H^{18}\Delta^5)$-clustered $3$-coloring. 
\end{theorem}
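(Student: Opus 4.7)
The plan is to chain together three ingredients from Dujmovi\'{c} et al.~\cite{dujmovic} in the order in which they appear. First, apply the Graph Minor Structure Theorem (Theorem~\ref{thm:graphminors}) to the $H$-minor-free graph $G$ to obtain a tree decomposition $(B_x\mid x\in V(T))$ in which every torso $G_x$ is $k_H$-almost embeddable. Because $\Delta(G)\le \Delta$, the portion of each torso coming from actual edges of $G$ has bounded maximum degree, while the ``virtual'' clique edges added to form the torso live on adhesion sets whose size is controlled by $k_H$.

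The second step is to invoke the clustered-coloring result for almost-embeddable graphs that is the content of Theorem~10 in~\cite{dujmovic}: every $k_H$-almost embeddable graph of maximum degree at most $\Delta$ admits a $3$-coloring whose monochromatic components have size polynomial in $k_H$ and $\Delta$. I would apply this to each torso $G_x$ (using that the adhesion sets are small to absorb the virtual edges into the almost-embeddability parameter) to obtain a clustered $3$-coloring of each torso with the desired polynomial cluster size.

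The third step is to glue these local colorings into a single $3$-coloring of $G$ via the tree decomposition. This is exactly what the combination machinery in Theorems~19 and~20 of~\cite{dujmovic} accomplishes: given a tree decomposition of $G$ whose torsos each admit a clustered $3$-coloring, one can recolor each bag by an appropriate permutation of the three colors so that monochromatic components do not propagate uncontrollably across adhesion sets; the resulting global cluster size grows only by a polynomial factor in the adhesion size and $\Delta$. Unwinding the polynomial blow-ups accumulated across the three steps and optimizing the exponents yields the claimed bound $O(k_H^{18}\Delta^5)$.

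The main obstacle is the last step: even with a good local coloring on each torso, naively gluing would let monochromatic paths snake from bag to bag through adhesion sets and create arbitrarily large components. The point of the~\cite{dujmovic} combination lemma is precisely that the bounded adhesion size (controlled by $k_H$) and the degree bound $\Delta$ allow one to choose, inductively along $T$, a permutation of the three color classes in each bag that prevents this propagation while only costing a polynomial factor in the cluster bound. Quantitatively tracking these factors through the compositions of the polynomial bounds of Theorems~10, 19 and~20 of~\cite{dujmovic} gives the exponents $18$ and $5$ in the statement.
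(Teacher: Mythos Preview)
Your proposal takes essentially the same approach as the paper: the paper does not give an independent proof but simply records that the statement follows directly from the proofs of Theorems~10,~19 and~20 in~\cite{dujmovic}, which is exactly the chain of ingredients you invoke. Your elaboration of how these three results combine is a reasonable sketch of that citation, though the specific roles you assign to each theorem and the gluing mechanism you describe (permutation of the three colors across bags) are details that would need to be verified against~\cite{dujmovic} rather than taken for granted.
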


The statement of Lemma~\ref{lem:Ktdecomposition} can now immediately be obtained from Theorem~\ref{thm:partition} by using that Theorem~\ref{thm:graphminors} holds with $k_H=q(|V(H)|)$, where $q(\cdot)$ is a polynomial.

With Lemma~\ref{lem:Ktdecomposition} at hand, we can now give the proof of Corollary~\ref{cor:had1}. We will make use of the following classical result due to Kostochka~\cite{kostochka} and Thomason~\cite{thomason}. 

\begin{theorem}\label{thm:kostthomas}
There exists an absolute constant $A>0$ such that the following holds for $t\ge 2$. 
Every $K_t$-minor-free graph has average degree at most $At\sqrt{\log t}$. In particular, every $K_t$-minor-free graph is $At\sqrt{\log t}$-degenerate. 
\end{theorem}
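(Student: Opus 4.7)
The plan is to establish the contrapositive: every graph $G$ with average degree at least $At\sqrt{\log t}$, for some absolute constant $A>0$, contains $K_t$ as a minor. The degeneracy statement then follows for free, because being $K_t$-minor-free is preserved under taking subgraphs: in any $K_t$-minor-free graph we can repeatedly find and peel off a vertex of degree at most $At\sqrt{\log t}$ (guaranteed by the average-degree bound applied to the current subgraph), producing an ordering that witnesses $At\sqrt{\log t}$-degeneracy.

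The first reduction is to pass from average degree to minimum degree. A standard greedy argument (iteratively delete any vertex of degree less than half the current average degree) shows that a graph of average degree $d$ contains a subgraph of minimum degree at least $d/2$. Thus it suffices to prove that any graph $H$ with $\delta(H) \ge \delta := A't\sqrt{\log t}$ contains $K_t$ as a minor.

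The core of the proof is a probabilistic branch-set construction. Pick $t$ seed vertices in $H$ at random (either by random coloring or by a sequential selection) and simultaneously grow branch sets $B_1,\ldots,B_t$ around them by breadth-first search, greedily assigning each newly encountered vertex to the seed that reaches it first. The minimum degree condition guarantees that each BFS ball of radius $r$ has roughly $\delta^r$ vertices, so a radius $r$ of order $\log_\delta n$ lets the branch sets cover a positive fraction of $V(H)$. The decisive calculation is to verify that with positive probability every pair $(B_i,B_j)$ is joined by an edge of $H$: the probability that a fixed pair is \emph{not} joined decays roughly as $(1-p)^{|B_i||B_j|}$ for a suitable edge-density parameter $p$, and balancing this against the $\binom{t}{2}$ union bound over pairs is precisely what forces the exponent $\sqrt{\log t}$ in $\delta$ (a weaker argument would yield only a $\log t$ factor, recovering Mader's earlier bound).

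The main obstacle is controlling the dependencies between the growing branch sets, since the BFS processes compete for vertices and the $B_i$ are not independent random objects. Thomason's approach handles this via a clever coupling together with an entropy/second-moment estimate that extracts the tight $\sqrt{\log t}$ factor; Kostochka's original route instead uses a deterministic iterated contraction scheme driven by a carefully chosen weighting that keeps track of ``how much'' each partial minor has contributed to the density. Either way, avoiding a loss of a full $\log t$ in the constant is the technical heart of the theorem and requires the bulk of the work; the final step, once a sufficiently dense minor on $O(t^2/\log t)$ vertices has been produced, is the comparatively easy observation that such a minor contains $K_t$ as a further minor.
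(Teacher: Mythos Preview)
The paper does not prove Theorem~\ref{thm:kostthomas} at all: it is quoted as a classical result of Kostochka~\cite{kostochka} and Thomason~\cite{thomason} and used only as a black box in Section~\ref{sec:applications}. There is therefore no ``paper's own proof'' to compare your attempt against.

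As for your sketch on its own terms: the outer architecture (contrapositive, reduction from average to minimum degree, degeneracy follows from hereditariness of being $K_t$-minor-free) is correct and standard. The interior, however, is not a proof but a narrative about proofs. Two specific places where the sketch would not go through as written: first, minimum degree $\delta$ alone does not force BFS balls of radius $r$ to have order $\delta^r$ --- that would need either large girth or some expansion hypothesis, neither of which you have. Second, the ``$(1-p)^{|B_i||B_j|}$'' heuristic for the probability that two branch sets are not joined has no meaning until $p$ is defined and the independence needed to multiply such factors is justified; in a deterministic graph with randomly grown branch sets this is precisely the hard dependency issue you flag but do not resolve. You yourself concede that extracting the $\sqrt{\log t}$ rather than $\log t$ is ``the technical heart of the theorem and requires the bulk of the work'', and that bulk is absent here. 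What you have written is closer to an annotated table of contents for the Kostochka and Thomason papers than to a proof; since the present paper also declines to reprove the result and simply cites it, that is arguably the appropriate level of detail, but it should not be labeled a proof.
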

 
    \begin{proof}[Proof of Corollary~\ref{cor:had1}]
As announced after the corollary, we will in fact prove that the statement claimed in the corollary even holds in the more general setting when we exclude any almost bipartite graph $H$ (this includes $H=K_3$ as a special case). So let $\varepsilon>0$ and an almost bipartite graph $H$ be fixed. By taking the constant $K$ to be sufficiently large in the statement of the corollary, we may assume that $t$ is sufficiently large for all the following estimates to hold, as otherwise the statement is trivially true. Now, consider any $H$-free and $K_t$-minor-free graph $G$ such that $\Delta:=\Delta(G)\le e^{t^{1-\varepsilon}}$. By Theorem~\ref{thm:kostthomas}, we have that $G$ is $d$-degenerate, where $d:= At\sqrt{\log t}$. Let $k\in \mathbb{N}$ be minimal such that $H$ is isomorphic to a subgraph of $K_{1,k,k}$. Then every neighborhood in~$G$ induces a $K_{k,k}$-free subgraph. 

Let $D\in\mathbb{N}$ be the degree of the polynomial $p(\cdot)$ in the statement of Lemma~\ref{lem:Ktdecomposition} (this is an absolute constant). We then know from the lemma that there is a $3$-coloring of the vertices of $G$ where every monochromatic component has order at most $O((\Delta t)^D).$ Let $V_1,V_2,V_3$ be the three color classes of this coloring and consider any monochromatic component $F$, i.e., any connected component of $G[V_i]$ for some $i\in \{1,2,3\}$. Clearly, $F$ inherits from $G$ the property that it is $d$-degenerate and that its neighborhoods induce $K_{k,k}$-free subgraphs.

Then considering an ordering $v_1,\ldots,v_{|V(F)|}$ of the vertices of $F$ witnessing its $d$-degeneracy, the K\"{o}v\'{a}ri-S\'{o}s-Tur\'{a}n theorem implies that $|E(F[N_L(v_j)])|\le O(d^{2-1/k})$ for every $j\in [n]$. In particular, we have $|E(F[N_L(v_j)])|\le d^2/f$ for every $j\in [n]$, where $f:=d^{1/(2k)}$ (using here that $t$ is chosen sufficiently large). For $t$ sufficiently large, we furthermore have 
$$|V(F)|\le O((\Delta t)^D)= O(\exp(Dt^{1-\varepsilon})t^D)\le e^{d^{1-\varepsilon/2}}\le 2^{cd},$$ where $c>0$ is the absolute constant from Theorem~\ref{thm:sparse}. All in all, this implies that we may apply Theorem~\ref{thm:sparse} to the graph $F$ with parameters $f$ and $d$, yielding that
$$\chi(F)\le \frac{C d}{\min\{\log  f,\log(d/\log |V(F)|)\}}\le \frac{Cd}{\min\{(\log d)/2k,\log(d/d^{1-\varepsilon/2})\}}$$ $$\le \frac{C}{\min\{1/(2k),\varepsilon/2\}}\frac{d}{\log d}\le \frac{C}{\min\{1/(2k),\varepsilon/2\}}\frac{At\sqrt{\log t}}{\log t}\le \frac{K}{3}\frac{t}{\sqrt{\log t}},$$ where $K:=\frac{3AC}{\min\{1/(2k),\varepsilon/2\}}$. All in all, for $i\in \{1,2,3\}$ we have shown that every connected component of $G[V_i]$ has chromatic number at most $\frac{K}{3}\frac{t}{\sqrt{\log t}}$, so it also follows that $\chi(G[V_i])\le \frac{K}{3}\frac{t}{\sqrt{\log t}}$ for every $i\in \{1,2,3\}$. 
Together, this yields
$\chi(G)\le \chi(G[V_1])+\chi(G[V_2])+\chi(G[V_3])\le \frac{Kt}{\sqrt{\log t}}$, as desired. This concludes the proof of the corollary as well as its extension from triangle-free graphs to $H$-free graphs, where $H$ is almost bipartite.
  
    \end{proof}

Before we are able to present the proof of Corollary~\ref{cor:had2}, we first deduce a (qualitative) strengthening of Lemma~\ref{lem:Ktdecomposition} concerning graphs with a forbidden complete bipartite subgraph (rather than graphs of bounded maximum degree). To do so, we make use of a result of Ossona de Mendez, Oum and Wood~\cite{demendez}. In the following, given a graph $H$ we say that a ($\le 1$)-subdivision of $H$ is any graph obtained from $H$ by replacing a subset of the edges of $H$ with internally vertex disjoint paths of length two.

\begin{theorem}[consequence of~Theorem~2.3 in~\cite{demendez}]\label{thm:demendez}
Let $a,b\in \mathbb{N}$, $a>2$, let $G$ be a graph with no $K_{a,b}$-subgraph and let $r\in \mathbb{R}_{\ge 1}$ be such that every graph $H$ of which some ($\le 1$)-subdivision is a subgraph of $G$ has average degree at most $r$. Then the vertex set of $G$ can be partitioned into sets $V_1,\ldots,V_a$ such that $\Delta(G[V_i])\le \Delta$ for every $i\in [a]$, where $$\Delta:=(r-a)\left(\binom{\lfloor r\rfloor}{a-1}(b-1)+r/2\right)+r=O(r^ab).$$
\end{theorem}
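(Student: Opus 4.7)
The plan is to derive this statement essentially by unpacking Theorem~2.3 of Ossona de Mendez, Oum and Wood~\cite{demendez}. My first step is to verify the standard setup: applying the $(\le 1)$-subdivision hypothesis with $H=G$ itself (no edges subdivided) yields that $G$ has average degree at most $r$, so $G$ is $r$-degenerate. I would fix a degeneracy ordering $v_1<\cdots<v_n$ with $|N_L(v_i)|\le\lfloor r\rfloor$ for every $i$.

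Next, I would construct the partition greedily: process the vertices in the ordering and place each $v_i$ in a class $V_j\in\{V_1,\dots,V_a\}$ that minimizes $|N_L(v_i)\cap V_j|$. By the pigeonhole principle, each vertex has at most $\lfloor r/a\rfloor\le r$ left-neighbors in its own class, which accounts for the additive $+r$ term in the formula for $\Delta$.

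The main work is bounding the right-degree within each class. Fixing $v\in V_j$ and setting $W:=N_R(v)\cap V_j$, the greedy minimality together with $v\in N_L(w)\cap V_j$ forces every $w\in W$ to have at least one left-neighbor in each of the $a$ classes, and therefore at most $\lfloor r\rfloor-(a-1)-1\le r-a$ left-neighbors in $V_j$ other than $v$; this produces the factor $(r-a)$. Next, for each $w\in W$ I would choose a witness set $S_w$ consisting of one selected left-neighbor in each class $V_{j'}$ with $j'\ne j$. If $b$ different vertices $w_1,\dots,w_b\in W$ share the same $S_w=S$, then $S\cup\{v\}$ together with $\{w_1,\dots,w_b\}$ realizes a $K_{a,b}$ in $G$, a contradiction. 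Hence each fixed $(a-1)$-set serves as witness for at most $b-1$ elements of $W$, explaining the factor $\binom{\lfloor r\rfloor}{a-1}(b-1)$.

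The key obstacle is controlling the size of the universe from which the witness sets $S_w$ are drawn, as a naive bound is exponential. This is where the $(\le 1)$-subdivision hypothesis is essential: the star from $v$ to $W$ together with the 2-paths $w$--$s$--$w'$ obtained from shared witnesses $s$ forms, after a careful choice of distinct intermediate vertices, a $(\le 1)$-subdivision of an auxiliary bipartite graph whose average degree is bounded by~$r$, which in turn limits how often distinct witness sets can appear. Combining this with the $K_{a,b}$-pigeonholing yields the remaining $r/2$ term and the overall bound $(r-a)(\binom{\lfloor r\rfloor}{a-1}(b-1)+r/2)+r$. The hardest part of making the argument rigorous is the bookkeeping in this final step: ensuring that the 2-paths use distinct intermediate vertices so that the subdivision structure is valid, and splitting $W$ into ``dense-sharing'' and ``sparse-sharing'' regimes in order to match the explicit constants in the stated formula.
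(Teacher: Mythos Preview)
The paper does not prove this statement at all: Theorem~\ref{thm:demendez} is quoted as a black box from Ossona de Mendez, Oum and Wood and is used only as an input to the proof of Lemma~\ref{lem:noKab}. There is therefore no ``paper's own proof'' to compare your attempt against; you are doing strictly more than the authors, who simply cite the result.

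As for the sketch itself, the first three steps (degeneracy ordering, greedy balanced partition, and the observation that each $w\in W$ has at least one left-neighbour in every class) are exactly the set-up of the original argument in~\cite{demendez}. The gap is in your final paragraph. You correctly identify that the $(a-1)$-element witness sets $S_w$ are not drawn from a single universe of size $\lfloor r\rfloor$---each $S_w$ lives inside $N_L(w)$, and these left-neighbourhoods are different for different $w$---so the count $\binom{\lfloor r\rfloor}{a-1}(b-1)$ is not yet justified. Your proposed fix, building an auxiliary graph from $2$-paths through shared witnesses and invoking the $(\le 1)$-subdivision hypothesis on it, is pointing in the right direction but is not a proof as written: you have not said what the vertex set of the auxiliary graph $H$ is, why the chosen $2$-paths are internally vertex-disjoint (so that they really form a $(\le 1)$-subdivision rather than just a topological minor with repeated internal vertices), or how the average-degree bound on $H$ translates into the specific additive term $r/2$ and the multiplicative factor $(r-a)$ in the formula. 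Until that bookkeeping is carried out, the argument does not establish the stated value of~$\Delta$.
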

With this result at hand, we can obtain the following variant of Lemma~\ref{lem:Ktdecomposition} when excluding a complete bipartite subgraph.
Lemma~\ref{lem:Ktdecomposition} can be recovered as the special case $a=1$ and $b=\Delta+1$.
\begin{lemma}\label{lem:noKab}
There exists a polynomial $P(\cdot)$ such that the following holds. Let $a,b,t\in \mathbb{N}$ and let $G$ be a $K_t$-minor-free graph with no $K_{a,b}$-subgraph. Then $G$ admits a $P(t^ab)$-clustered $3a$-coloring.
\end{lemma}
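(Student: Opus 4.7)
The plan is to reduce Lemma~\ref{lem:noKab} to Lemma~\ref{lem:Ktdecomposition} via the partitioning supplied by Theorem~\ref{thm:demendez}. To invoke the latter, one needs a uniform upper bound $r$ on the average degree of every graph $H$ some $(\le 1)$-subdivision of which is a subgraph of $G$. The key observation is that contracting the paths of length two inside such a subdivision recovers $H$ itself as a minor of $G$, so $K_t$-minor-freeness of $G$ is inherited by every such $H$; the Kostochka--Thomason theorem (Theorem~\ref{thm:kostthomas}) then yields that every such $H$ has average degree at most $r := At\sqrt{\log t}$.

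Assuming $a \ge 3$ so that Theorem~\ref{thm:demendez} applies, I would use this value of $r$ to obtain a partition $V(G) = V_1 \cup \cdots \cup V_a$ with $\Delta(G[V_i]) \le \Delta$ for each $i$, where $\Delta = O(r^a b) \le O(t^{3a/2} b)$ via the trivial estimate $\sqrt{\log t}\le \sqrt{t}$. In particular, $\Delta$ (and hence $\Delta t$) is polynomially controlled by $t^a b$. Since every induced subgraph $G[V_i]$ is $K_t$-minor-free as a subgraph of $G$ and has maximum degree at most $\Delta$, Lemma~\ref{lem:Ktdecomposition} produces a $p(\Delta t)$-clustered $3$-coloring of $G[V_i]$, where $p(\cdot)$ is the polynomial from that lemma. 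Taking disjoint palettes of three colors per index $i$ and combining these partial colorings yields a proper $3a$-coloring of $G$; every monochromatic component lies within a single $G[V_i]$ and therefore has at most $p(\Delta t) \le P(t^a b)$ vertices for an appropriate polynomial $P$ obtained by composing the two polynomial bounds.

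The boundary cases $a \in \{1,2\}$, for which Theorem~\ref{thm:demendez} as cited excludes $a \le 2$, would be handled separately: $a=1$ is literally Lemma~\ref{lem:Ktdecomposition} with $\Delta = b-1$, while $a = 2$ can be treated by a similar direct application of the Ossona de Mendez--Oum--Wood partitioning technique (or by checking that the underlying result from~\cite{demendez} extends to $a = 2$). Overall there is no deep new idea beyond the two cited theorems; the only genuine technical point — and therefore the main obstacle — is the bookkeeping that the average-degree bound $r$ inherited from $K_t$-minor-freeness is polynomial in $t$, and that composing two polynomial bounds preserves polynomial control of the cluster size in $t^a b$ with a degree independent of $a$, $b$, and $t$.
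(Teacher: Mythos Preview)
Your proposal is correct and matches the paper's proof essentially step for step: bound the average degree of any $H$ whose $(\le 1)$-subdivision sits in $G$ via Kostochka--Thomason (since such an $H$ is a minor of $G$), feed $r=At\sqrt{\log t}$ into Theorem~\ref{thm:demendez} to split $V(G)$ into $a$ parts of bounded degree, then apply Lemma~\ref{lem:Ktdecomposition} to each part and combine with disjoint palettes. The only cosmetic difference is the treatment of small $a$: the paper simply declares ``w.l.o.g.\ $a>2$'' (and also $t$ large so that $r\le t^2$, absorbing the constant $A$), whereas you sketch separate arguments for $a\in\{1,2\}$; both are fine.
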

\begin{proof}
Possibly after suitably increasing parameters we may w.l.o.g.~assume $a>2$ and that $t$ is sufficiently large. Let $p(\cdot)$ be the polynomial from Lemma~\ref{lem:Ktdecomposition}. Let $r:=At\sqrt{\log t}$, where $A>0$ is the constant from Theorem~\ref{thm:kostthomas}, and note that $r\le t^2$ (since we assumed $t$ to be sufficiently large). Note that any graph $H$ of which some ($\le 1$)-subdivision is a subgraph of $G$ must be a minor of $G$ and thus also $K_t$-minor-free. Hence, by Theorem~\ref{thm:kostthomas} every such graph $H$ must have average degree at most $r$. Hence, by Theorem~\ref{thm:demendez} we can partition the vertex set of $G$ into parts $V_1,\ldots,V_a$ such that $\Delta(G[V_i])\le \Delta$ for all $i\in [a]$, where $\Delta$ is defined as in the Theorem and satisfies $\Delta=O(r^ab)=O(t^{2a}b)$. By Lemma~\ref{lem:Ktdecomposition}, for every $i\in [a]$ the graph $G[V_i]$ admits a $p(\Delta t)$-clustered $3$-coloring. By using disjoint color sets on distinct sets $V_i$, this implies that $G$ admits a $p(\Delta t)$-clustered $3a$-coloring. Note that since $\Delta=O(t^{2a}b)$, we have that $p(\Delta t)$ is polynomially bounded in terms of $t^ab$. This concludes the proof of the lemma.
\end{proof}
Equipped with Lemma~\ref{lem:noKab}, we are now ready to present the proof of Corollary~\ref{cor:had2}.
    
\begin{proof}[Proof of Corollary~\ref{cor:had2}]
Since the statement uses asymptotic notation in $t$ throughout, we may w.l.o.g assume $t$ to be chosen sufficiently large in the following. Suppose that $G$ is a counterexample to Hadwiger's conjecture for parameter $t$, i.e., $G$ is $K_t$-minor-free and satisfies $\chi(G)\ge t$. Let $A>0$ denote the absolute constant from Theorem~\ref{thm:kostthomas} and $C>0$ the absolute constant from Theorem~\ref{thm:sparse}. Let $D\in \mathbb{N}$ be an absolute constant bigger than the degree of the polynomial $P(\cdot)$ in Lemma~\ref{lem:noKab}. Let $\varepsilon:\mathbb{N}\rightarrow (0,1)$ be any function chosen such that $\varepsilon(x)\rightarrow 0$ as well as $(\log x)^{\varepsilon(x)}\rightarrow \infty$ for $x\rightarrow \infty$. Set $$a:=\lfloor (\log t)^{1/2-\varepsilon(t)} \rfloor$$ and $$b:=\left\lfloor t^{-a}\exp\left(t/(De^{4aAC\sqrt{\log t}})\right)\right\rfloor,$$

and note that $a=(\log t)^{1/2-o(1)}$ and $b=e^{t^{1-o(1)}}$. Our goal will be to prove the following claim.

\medskip

\paragraph*{\textbf{Claim.}} Graph $G$ contains a $K_{a,b}$ subgraph or it has a subgraph $H'$ contained in the neighborhood of some vertex such that $|V(H')|\le At\sqrt{\log t}$ and $|E(H')|\ge \frac{ (At\sqrt{\log t})^2}{e^{4aAC\sqrt{\log t}}}=t^{2-o(1)}$.

\medskip

Before proving this claim, let us observe that it implies the statement of the corollary: If $G$ contains a $K_{a,b}$ subgraph, then it trivially satisfies the statement of the corollary. Next, suppose the second option of the claim occurs, i.e., $G$ has a subgraph $H'$ contained in the neighborhood of some vertex such that $|V(H')|\le At\sqrt{\log t}$ and $|E(H')|\ge \frac{ (At\sqrt{\log t})^2}{e^{4aAC\sqrt{\log t}}}=t^{2-o(1)}$. If $|V(H')|\le t$, this is already a subgraph with the properties as desired in the statement of the corollary. If $|V(H')|>t$, then taking a random subset $U$ of $V(H')$ of size $t$ and letting $H:=H'[U]$ yields a subgraph of order $t$ with at least $|E(H')|\cdot \frac{t(t-1)}{|V(H')|(|V(H')|-1)}\ge t^{2-o(1)}\cdot \frac{1}{O(\log t)}=t^{2-o(1)}$ edges in expectation. Hence, also in this case we find a subgraph on at most $t$ vertices and $t^{2-o(1)}$ edges contained in the neighborhood of some vertex. Hence, once we have proved the above claim, the statement of the corollary will also be proven.

To prove the above claim, suppose towards a contradiction $G$ neither contains $K_{a,b}$ as a subgraph, nor does it have a subgraph $H'$ with the asserted properties. Applying Lemma~\ref{lem:noKab} and using the choice of $D$ and our assumption that $t$ is sufficiently large, we find that $G$ admits a $(t^ab)^D$-clustered $3a$-coloring. Let $U_1,\ldots,U_{3a}$ denote the color classes of this coloring and consider any connected component $F$ of $G[U_i]$ for some $i\in [3a]$. Note that by Theorem~\ref{thm:kostthomas} we have that $F$ is $d$-degenerate, where $d:= At\sqrt{\log t}$. By the property of the clustered $3a$-coloring, we further have 
$$|V(F)|\le (t^ab)^D\le \left(\exp(t/(De^{4aAC\sqrt{\log t}}))\right)^D=\exp(t/e^{4aAC\sqrt{\log t}}).$$ 
In particular (using that $t$ is sufficiently large), we have $|V(F)|\le 2^{cd}$ where $c>0$ is the absolute constant from Theorem~\ref{thm:sparse}. Let us set $f:=e^{4aAC\sqrt{\log t}}$ and consider a linear ordering $v_1,\ldots,v_{|V(F)|}$ of the vertices of $F$ witnessing its $d$-degeneracy. By our assumption every subgraph of $G$ that is fully contained in a neighborhood of a vertex and has at most $At\sqrt{\log t}$ vertices has less than $\frac{ (At\sqrt{\log t})^2}{e^{4aAC\sqrt{\log t}}}=\frac{d^2}{f}$ edges. In particular, every left-neighborhood in the above linear ordering spans at most $d^2/f$ edges. We can now see that all conditions of Theorem~\ref{thm:sparse} are met for the graph $F$ with parameters $d$ and $f$. Thus, it follows that 
$$\chi(F)\le \frac{Cd}{\min\{\log f,\log(d/\log |V(F)|)\}}$$
$$=\frac{Cd}{4aAC\sqrt{\log t}}\le \frac{t}{4a}.$$ We have thus shown that every connected component of $G[U_i]$ is $\lfloor t/(4a)\rfloor$-colorable for every $i\in [3a]$, and thus also $\chi(G[U_i])\le \lfloor t/(4a)\rfloor$ for every $i$. It follows that $\chi(G)\le \sum_{i=1}^{3a}\chi(G[U_i])\le 3a\cdot \lfloor t/(4a)\rfloor\le \frac{3t}{4}<t$, a contradiction to our assumption that $\chi(G)\ge t$. This contradiction shows that our initial assumption was wrong, proving the claim. As previously explained, this also concludes the proof of the corollary.
\end{proof}
Next, we get to the proof of Corollary~\ref{cor:erd} about Erd\H{o}s's conjecture on graph Ramsey numbers. 
\begin{proof}[Proof of Corollary~\ref{cor:erd}]
Let $G$ be a graph such that $\chi(G)=k$ but $r(G)<r(K_k)$. Let $d$ denote the degeneracy of $G$, i.e. the minimum $d\in \mathbb{N}$ such that $G$ is $d$-degenerate. Then $G$ has a subgraph $H$ of minimum degree at least $d$. By a well-known probabilistic argument (see e.g.~\cite[Proposition~5.4.1.]{yuvalnotes}), it then holds that $r(H)>2^{d/2}$. Since it is furthermore well-known that $r(K_k)<4^k$, it follows that $2^{d/2}<r(H)\le r(G)<r(K_k)<4^k$ and thus $d<4k$. So $G$ is a $4k$-degenerate graph with chromatic number $\chi(G)=k$. Let $v_1,\ldots,v_n$ be a linear ordering of the vertices of $G$ witnessing its $4k$-degeneracy. Let $c,C>0$ be the absolute constants from Theorem~\ref{thm:sparse} and set $f:=e^{5C}$. If $n\ge \min\{e^{4k/e^{5C}},2^{4ck}\}$, then $G$ has $e^{\Omega(k)}$ vertices, concluding the proof in this case. Also, if there exists some $i\in [n]$ such that $|E(G[N_L(v_i)])|>\frac{(4k)^2}{f}$, then $G[N_L(v_i)]$ is a subgraph of $G$ with at most $4k$ vertices and $\Omega(k^2)$ edges, concluding the proof of the corollary also in this case. 

Thus, moving on, suppose that $n< \min\{e^{4k/e^{5C}},2^{4ck}\}$ and $|E(G[N_L(v_i)])| \leq \frac{(4k)^2}{f}$ for every $i\in [n]$. We are now in the position to apply Theorem~\ref{thm:sparse} to $G$ with parameters $d':=4k$ and $f$, and thus we find that
$$\chi(G)\le \frac{C(4k)}{\min\{\log f, \log((4k)/\log n)\}}=\frac{4k}{5}<k,$$ a contradiction since we assumed $\chi(G)=k$. This contradiction shows that we always either have that $|V(G)|\ge e^{\Omega(k)}$ or that $G$ has a subgraph on $O(k)$ vertices with $\Omega(k^2)$ edges, concluding the proof.
\end{proof}
Next, we present the short proofs of Corollaries~\ref{cor:logbound} and~\ref{cor:hereditary} on coloring hereditary graph classes.
\begin{proof}[Proof of Corollary~\ref{cor:logbound}]
Let $H$ be any given almost bipartite graph and let $\varepsilon>0$. Let $k\in \mathbb{N}$ be such that $H$ is isomorphic to a subgraph of $K_{1,k,k}$. Let $C,c>0$ be the absolute constants from Theorem~\ref{thm:sparse}. We shall prove the statement of the corollary with $K:=C\frac{1+\varepsilon}{\varepsilon}$ for sufficiently large $n$. Note that by suitably adjusting the constant $K$, it will then also hold for all $n$.

So let $G$ be an $H$-free $d$-degenerate graph on $n$ vertices. If $d<\log^{1+\varepsilon} n$, then we have $\chi(G)\le d+1\le \log^{1+\varepsilon}n+1\le K\max\left\{\log^{1+\varepsilon}n, \frac{d}{\log d}\right\}$, so the desired bound holds. Moving on, we may thus assume $d\ge \log^{1+\varepsilon} n$. 

Let $v_1,\ldots,v_n$ be a linear ordering of the vertices such that every vertex has at most $d$ left-neighbors. Since $G$ is $H$-free and $H$ is isomorphic to a subgraph of $K_{1,k,k}$, we have that $G[N_L(v_i)]$ is $K_{k,k}$-free. Hence, by the K\H{o}v\'{a}ri-S\'{o}s-Tur\'{a}n theorem we have $e(G[N_L(v_i)])\le (k+1)d^{2-1/k}\le d^{1-1/(2k)}$ for every $i\in [n]$, where we used that $n$ and thus $d$ is sufficiently large. Therefore, the number of edges spanned by any left-neighborhood is at most $d^2/f$, where $f:=d^{1/(2k)}$ for $n$ sufficiently large. Furthermore, since $d>\log^{1+\varepsilon}n$, we have $n\le e^{d^{1/(1+\varepsilon)}}\le 2^{cd}$ for $n$ sufficiently large. All in all, we have now met all the requirements for an application of Theorem~\ref{thm:sparse} to $G$ and thus obtain
$$\chi(G)\le \frac{Cd}{\min\{\log f,\log(d/\log n)\}}\le \frac{Cd}{\min\{\log( d^\varepsilon),\log(d/d^{1/(1+\varepsilon)})\}}=C\frac{1+\varepsilon}{\varepsilon}\frac{d}{\log d}$$ $$\le K\max\left\{\frac{d}{\log d},\log^{1+\varepsilon}n\right\},$$ concluding the proof.
\end{proof}
\begin{proof}[Proof of Corollary~\ref{cor:hereditary}]
    It follows immediately from our assumptions, using that $d(\cdot)$ is monotone, that every $n$-vertex graph in $\mathcal{G}$ is $d(n)$-degenerate. We furthermore claim that under the assumptions in the statement of the corollary, there exists some $k\in \mathbb{N}$ such that every graph $G\in \mathcal{G}$ has no $K_{k,k}$-subgraph. For this, it suffices to note that since $d(n)=o(n)$, there exists some $n_0\in \mathbb{N}$ such that for $n\ge n_0$ neither $K_n$ nor $K_{n,n}$ is contained in $\mathcal{G}$. Since $\mathcal{G}$ is hereditary, it follows that every graph $G\in \mathcal{G}$ has neither $K_{n_0}$ nor $K_{n_0,n_0}$ as an induced subgraph. But it is easy to see that this implies that every graph $G\in \mathcal{G}$ does not contain $H:=K_{R(n_0,n_0),R(n_0,n_0)}$ as a subgraph (here $R(n_0,n_0)$ denotes the diagonal Ramsey number). Since $H$ is bipartite, we can now apply Corollary~\ref{cor:logbound} to find that there exists a constant $K$ depending only on $\varepsilon$ and $H$ such that every graph $G\in \mathcal{G}$ which is $d$-degenerate satisfies $\chi(G)\le K\max\{\log^{1+\varepsilon/2} n,\frac{d}{\log d}\}$. Every $n$-vertex graph $G\in \mathcal{G}$ is $d(n)$-degenerate and $d(n)>\log^{1+\varepsilon} n$, and so we find that
    $$\chi(G)\le K\max\left\{\log^{1+\varepsilon/2}n, \frac{d(n)}{\log d(n)}\right\}=K\frac{d(n)}{\log d(n)}$$ for every sufficiently large $n$, concluding the proof of the corollary.
    \end{proof}
Finally, we also provide the proof of the improved upper bound for the chromatic number of $(g,k)$-planar graphs when excluding an almost bipartite subgraph. On top of the fact that $(g,k)$-planar graphs are $O(\sqrt{(g+1)(k+1)})$-degenerate~\cite{demendez,wood}, we will also need the following clustered coloring result due to Wood.
\begin{proposition}[cf.~Proposition~45,~\cite{wood}]\label{prop:12clust}
Every $(g,k)$-planar graph has a $O((g+1)^{9/2}(k+1)^{7/2})$-clustered $12$-coloring.
\end{proposition}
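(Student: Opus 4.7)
The plan is to combine a layered tree-width bound for $(g,k)$-planar graphs with a clustered $4$-coloring for bounded tree-width graphs, cutting the vertex set into three ``vertical'' strips based on BFS layers. The fact that $12 = 3 \times 4$ strongly suggests this product structure of three strips times four colors per strip.

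First I would establish, via the planarization $G^{\times}$ of $G$ (obtained by inserting a degree-$4$ vertex at every crossing of the given drawing, so that $G^{\times}$ embeds in the surface of Euler genus $g$), that $G$ admits a BFS-layering $L_0, L_1, \ldots$ and a tree-decomposition $(T, (B_x)_{x \in V(T)})$ satisfying $|B_x \cap L_i| \le \ell := O((g+1)(k+1))$ for all $x$ and all $i$. Such a layered tree-width bound for $(g,k)$-planar graphs is a theorem of Dujmovi\'c--Eppstein--Morin--Wood; the argument combines the BFS layering of the embedded planarization (which has layered tree-width $O(g+1)$) with the fact that each edge of $G$ is subdivided at most $k$ times in $G^{\times}$, contributing an extra factor $O(k+1)$.

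Next I would partition the layers into blocks $B_i := L_{3i} \cup L_{3i+1} \cup L_{3i+2}$ for $i \ge 0$. Since every edge of $G$ joins same or adjacent layers, there is no edge between $B_i$ and $B_j$ whenever $|i-j| \ge 2$. Restricted to $B_i$, the tree-decomposition has bags of size at most $3\ell$, so $\mathrm{tw}(G[B_i]) \le 3\ell - 1$. I would then apply a clustered $4$-coloring result for graphs of tree-width $w$: every such graph admits a $4$-coloring in which each monochromatic component has size at most $C(w)$, with $C(w)$ polynomial in $w$. This can be achieved by taking a nice tree-decomposition and cycling through $4$ colors along the depth of the decomposition tree, so that each monochromatic component is confined to a short interval of tree-depth whose total vertex count is polynomial in $w$. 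Finally, I would use three disjoint $4$-color palettes $P^{(0)}, P^{(1)}, P^{(2)}$ (twelve colors in total), assigning $B_i$ the palette $P^{(i \bmod 3)}$ via its own $4$-clustered coloring. Two equally colored vertices must lie in blocks of the same residue mod $3$, so either they lie in the same block (and then the cluster-size bound for $G[B_i]$ applies) or in blocks with $|i-j| \ge 3$ (and then there is no edge between them). Hence every monochromatic component sits inside a single block and has size at most $C(3\ell - 1) = \mathrm{poly}((g+1)(k+1))$.

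The main obstacle will be extracting the specific asymmetric exponents $9/2$ for $g+1$ and $7/2$ for $k+1$ in the final cluster-size bound. A naive combination of ``layered tree-width $O((g+1)(k+1))$'' with a polynomial clustered coloring of bounded tree-width graphs yields a bound of the form $O(((g+1)(k+1))^c)$ with \emph{equal} exponents on $g+1$ and $k+1$. The asymmetry $9/2$ vs.\ $7/2$ suggests that Wood's proof exploits finer structure --- perhaps a planar product structure $G \subseteq H \boxtimes P$ with $\mathrm{tw}(H)$ depending asymmetrically on $g$ and $k$, or a clustered coloring of Euler-genus-$g$ graphs in which the Euler genus and per-edge crossing count enter the cluster-size bound separately. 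Pinning down these exponents, rather than just proving the qualitative statement ``polynomial cluster size with $12$ colors'', is where the main technical work lies.
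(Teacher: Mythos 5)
The paper itself offers no proof of this proposition --- it is quoted verbatim from Wood's survey (Proposition~45 there) --- so your argument has to stand on its own. Its first half is fine: $(g,k)$-planar graphs do have layered treewidth $O((g+1)(k+1))$ (Dujmovi\'c--Eppstein--Wood), and grouping the layers into blocks of three consecutive layers while reusing palettes only on blocks of the same residue mod $3$ correctly confines every monochromatic component to a single block, each block inducing a graph of treewidth $O((g+1)(k+1))$.

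The fatal problem is the key lemma you then invoke: that every graph of treewidth $w$ admits a $4$-coloring whose monochromatic components have size at most some function $C(w)$. This is false. The clustered (and even the defective) chromatic number of the class of treewidth-$w$ graphs equals $w+1$: there are standard examples of treewidth-$w$ graphs on $n$ vertices in which \emph{every} coloring with at most $w$ colors has a monochromatic component whose size grows with $n$ (see the treewidth section of Wood's survey). Since your blocks have treewidth of order $(g+1)(k+1)$, which is far larger than $4$, no $4$-color (indeed no $O(1)$-color) clustered coloring with clustering depending only on the treewidth can exist. Your sketched proof of the lemma also breaks down on its own terms: ``cycling through $4$ colors along the depth of a nice tree-decomposition'' is not well defined for vertices that occur in bags at many depths, and the set of vertices whose bags lie in a bounded window of depths is not bounded by any function of $w$. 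All results in this area that achieve a constant number of colors with bounded clustering for graphs of bounded (layered) treewidth carry a degree-type parameter --- for instance, the theorem of Alon, Ding, Oporowski and Vertigan that treewidth-$w$ graphs of maximum degree $\Delta$ are $2$-colorable with clustering $O(w\Delta)$ --- and a correct proof of Wood's proposition must combine the layered-treewidth bound with such degree-sensitive ingredients and a separate mechanism for high-degree vertices; this is exactly where the asymmetric exponents $9/2$ and $7/2$ (reflecting quantities of order $\sqrt{(g+1)(k+1)}$, as you yourself suspected) enter, and it is the part your proposal leaves out.
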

We are now ready for the proof of our last application, Corollary~\ref{cor:gk}. 
\begin{proof}[Proof of Corollary~\ref{cor:gk}]As announced after the corollary, we can in fact show a more general statement, namely that the claimed asymptotic upper bound on the chromatic number of $(g,k)$-planar graphs holds if we exclude any almost bipartite graph $H$ ($H=K_3$ is a special case). So let $H$ be a fixed almost bipartite graph, and let $k\in \mathbb{N}$ be minimal such that $H$ is isomorphic to a subgraph of $K_{1,k,k}$. As the claimed bound on the chromatic number is asymptotic in terms of $(g+1)(k+1)$, in the following we may assume that the product $(g+1)(k+1)$ is sufficiently large. Let $G$ be any given $H$-free $(g,k)$-planar graph. Then $G$ is $d$-degenerate, where $d= A\sqrt{(g+1)(k+1)}$ for some absolute constant $A>0$. Furthermore, by Proposition~\ref{prop:12clust} there exists a partition $(V_1,\ldots,V_{12})$ of the vertices of $G$ such that for every $i\in [12]$ all components of $G[V_i]$ have order at most $B(g+1)^{9/2}(k+1)^{7/2}$, where $B>0$ is some absolute constant. Now consider any component $F$ of $G[V_i]$. Clearly, $F$ is $d$-degenerate since $G$ is. Let $v_1,\ldots,v_{|V(F)|}$ be a linear ordering of the vertices witnessing $d$-degeneracy. Since we assumed that $(g+1)(k+1)$ is sufficiently large, we then have 
$$|V(F)|\le B(g+1)^{9/2}(k+1)^{7/2}\le e^{\sqrt{d}}\le 2^{cd},$$ where $c>0$ is the absolute constant from Theorem~\ref{thm:sparse}. Furthermore, since $F$ is $K_{1,k,k}$-free, the K\"{o}v\'{a}ri-S\'{o}s-Tur\'{a}n theorem implies that $|E(G[N_L(v_j)])|\le O(d^{1-1/k})\le d^{1-1/(2k)}$ for all $j\in [|V(F)|]$. Setting $f:=d^{1/(2k)}$, we may now apply Theorem~\ref{thm:sparse} to $F$ to obtain:
$$\chi(F)\le \frac{Cd}{\min\{\log f, \log(d/\log |V(F)|)\}}\le \frac{Cd}{\min\{1/(2k),1/2\}\log d}$$ $$=O_H(d/\log d)=O_H\left(\frac{\sqrt{(g+1)(k+1)}}{\log ((g+1)(k+1))}\right).$$ Since $F$ was an arbitrary component of $G[V_i]$, we also find that $\chi(G[V_i])\le O_H\left(\frac{\sqrt{(g+1)(k+1)}}{\log ((g+1)(k+1))}\right)$ for every $i\in [12]$ and thus
$$\chi(G)\le \sum_{i=1}^{12}{\chi(G[V_i])}\le O_H\left(\frac{\sqrt{(g+1)(k+1)}}{\log ((g+1)(k+1))}\right),$$ as desired. This concludes the proof of the corollary and its extension to $H$-free graphs when $H$ is almost bipartite.
\end{proof}
\section{Open problems}
We conclude by stating several intriguing open problems that are natural next steps to consider after our work. If resolved, these would come with further applications.

The perhaps most important open problem is whether we can further improve the exponential requirement on the number of vertices in Theorems~\ref{thm:main2} and~\ref{thm:sparse}.
\begin{problem}
Can the upper bound $2^{cd}$ on the number of vertices assumed in Theorems~\ref{thm:main2} and~\ref{thm:sparse} be weakened to $e^{\omega(d)}$ (with a corresponding improvement of the upper bound on $\chi$ when $n=e^{\omega(d)}$)? What if we additionally assume that the graph has large (constant) girth?
\end{problem}
We remark that even improving the value of the constant $c$ in Theorem~\ref{thm:main2} could have significant consequences: For example, showing that $c$ can be taken to be some moderately large constant, together with an appropriate bound on $\chi$, would solve Erd\H{o}s's aforementioned problem on graph Ramsey numbers for all triangle-free graphs, and perhaps even all $H$-free graphs where $H$ is almost bipartite when $k$ is large enough. The idea here would be to use the trivial bound $r(G)\ge |V(G)|$ on the Ramsey number, so that in the case that the graph is exponentially big in terms of $k$ (with a large constant in the exponent), one would immediately obtain $r(G)\ge |V(G)|\ge r(K_k)$. 

A second potential qualitative strengthening of our main results (Theorems~\ref{thm:main2} and~\ref{thm:sparse}) concerns the question whether our \emph{global} requirement (bounding the number of vertices) can be replaced by a \emph{local} requirement, namely bounding the maximum degree.
\begin{problem}
\label{conj:max-degree}
Is the following statement true for all $\varepsilon>0$? If $G$ is a $d$-degenerate triangle-free graph with maximum degree $\Delta(G)\le e^{d^{1-\varepsilon}}$, then we have $$\chi(G) = O\left(\frac{d}{\log d}\right).$$
\end{problem}
The proof of this or similar statements would likely require some technical innovation which manages to ``localize'' the random coloring process considered in this paper, such that the union bound we employ can be replaced by an application of the Lov\'{a}sz local lemma. Here is an example of an immediate application if the answer to the above problem is affirmative: Given a graph $H$, an \emph{$H$-immersion} is any graph obtained from $H$ by replacing its edges by edge disjoint (but not necessarily vertex disjoint) paths. In 1988, Lescure and Meyniel~\cite{lescure} conjectured an immersion analogue of Hadwiger's conjecture, namely that graphs without a $K_t$-immersion are $(t-1)$-colorable. This remains open, but an upper bound of $O(t)$ on the degeneracy (and thus chromatic number) is known~\cite{devos}. Van den Heuvel and Wood~\cite{heuvelwood} showed that the vertex set of every $K_t$-immersion-free graph can be split into two parts, each inducing a subgraph of maximum degree $O(t^3)$. From this one can see that an affirmative answer to Problem~\ref{conj:max-degree} would imply an upper bound $O(t/\log t)$ on the chromatic number of triangle-free graph without a $K_t$-immersion, thus proving a strong form of Lescure and Meyniel's conjecture for triangle-free graphs. This bound would also be optimal up to constant factors: Take a random $(t-2)$-regular graph and delete all vertices on triangles. This graph w.h.p.~has chromatic number $\Omega(t/\log t)$ but has too small maximum degree to host a $K_t$-immersion. 

The last two open problems we would like to advertise here are the intriguing questions whether $K_r$-free $d$-degenerate graphs with chromatic number $d+1$ must also be exponentially large, and (looking at our proof methods) the closely related question of whether one can give an asymptotic improvement of the trivial upper bound $d+1$ on the fractional chromatic number of $K_r$-free $d$-degenerate graphs when $r\ge 4$. 
\begin{problem}
    \label{conj:main-kr}
    Is it true that if the chromatic number of a $K_r$-free $d$-degenerate graph is $d + 1$, then the graph has $e^{\Omega_r(d)}$ vertices?
\end{problem}
\begin{problem}
    \label{conj:harris-kr}
    Let $r\ge 4$. Is it true that the maximum fractional chromatic number of $K_r$-free $d$-degenerate graphs is $o_r(d)$?
\end{problem}
Recall that our Theorem~\ref{thm:lower} gives a lower bound $\Omega(d/\log^{(r-2)}d)$ on the fractional chromatic number which gets closer and closer to linear in $d$ as $r$ gets bigger.

\medskip

\paragraph*{\textbf{Acknowledgments.} We would like to thank Noga Alon, Anders Martinsson and Jonathan Tidor for interesting discussions on the topic.}
\bibliographystyle{abbrvurl}
\bibliography{references}

\end{document}